\theoremstyle:=definition,remark,plain\do{%
     \expandafter\g@addto@macro\csname th@\theoremstyle\endcsname{%
        \addtolength\thm@preskip\parskip
     }%
   }
\definecolor{commentcolor}{RGB}{74,112,35}
\def\sectionfont{\sffamily\Large\bfseries\boldmath}
\def\subsectionfont{\sffamily\large\bfseries\boldmath}
\def\paragraphfont{\sffamily\normalsize\bfseries\boldmath}
\titleformat*{\section}{\sectionfont}
\titleformat*{\subsection}{\subsectionfont}
\titleformat*{\subsubsection}{\paragraphfont}
\titleformat*{\paragraph}{\paragraphfont}
\titleformat*{\subparagraph}{\paragraphfont}
\declaretheorem[numberwithin=section,style=plain]{theorem}
\declaretheorem[numberwithin=section,style=plain]{corollary}
\declaretheorem[numberwithin=section,style=plain]{lemma}
\declaretheorem[numberwithin=section,style=plain]{definition}
\declaretheorem[numbered=no,style=plain]{remark}
\setlist{nolistsep}
\renewcommand{\leq}{\leqslant}
\renewcommand{\geq}{\geqslant}
\renewcommand{\succeq}{\succcurlyeq}
\newtheorem{exmp}{Example}
\pgfplotsset{compat=1.13}
\pgfplotsset{plotOptions/.style={
	legend pos=outer north east,
	legend style={draw=none},
	legend cell align={left},
	xlabel={iterations},
	xmin=-1,
	xmax=1000,
	width=.37\linewidth,
	height=.35\linewidth,
		label style={font=\scriptsize},
		legend style={font=\scriptsize},
		tick label style={font=\scriptsize},
		solid,
		very thick
	}}
\pgfplotsset{plotOptions2/.style={
	legend pos=outer north east,
	legend style={draw=none},
	legend cell align={left},
	xmin=0.05,
	xmax=0.85,
	width=0.75\textwidth,
	height=0.5\textwidth,
		label style={font=\scriptsize},
		legend style={font=\scriptsize},
		tick label style={font=\scriptsize},
				no markers,
		solid,
		ultra thick
	}}	
\pgfplotsset{plotOptions3/.style={
	legend pos=outer north east,
	legend style={draw=none},
	legend cell align={left},
	xmin=0,
	xmax=0.9,
	width=0.7\textwidth,
	height=0.5\textwidth,
		label style={font=\scriptsize},
		legend style={font=\scriptsize},
		tick label style={font=\scriptsize},
				no markers,
		solid,
		ultra thick
	}}	
\pgfplotsset{plotOptions4/.style={
	legend pos=outer north east,
	legend style={draw=none},
	legend cell align={left},
	xlabel={iterations},
	xmin=0,
	xmax=0.9,
	width=.37\linewidth,
	height=.35\linewidth,
		label style={font=\scriptsize},
		legend style={font=\scriptsize},
		tick label style={font=\scriptsize},
				no markers,
		solid,
		very thick
	}}	
 \pgfplotsset{plotOptions5/.style={
	legend pos=outer north east,
	legend style={draw=none},
	legend cell align={left},
	xmin=0.05,
	xmax=0.85,
	width=0.75\textwidth,
	height=0.5\textwidth,
		label style={font=\scriptsize},
		legend style={font=\scriptsize},
		tick label style={font=\scriptsize},
	}}	
  \pgfplotsset{plotOptions6/.style={
	legend pos=outer north east,
	legend style={draw=none},
	legend cell align={left},
	xmin=0,
	xmax=0.85,
	width=0.75\textwidth,
	height=0.5\textwidth,
		label style={font=\scriptsize},
		legend style={font=\scriptsize},
		tick label style={font=\scriptsize},
		no markers,
		ultra thick
	}}	
    \pgfplotsset{plotOptions7/.style={
	legend pos=outer north east,
	legend style={draw=none},
	legend cell align={left},
	xmin=0,
	xmax=1,
	width=0.75\textwidth,
	height=0.5\textwidth,
		label style={font=\scriptsize},
		legend style={font=\scriptsize},
		tick label style={font=\scriptsize},
		no markers,
		ultra thick
	}}	
\definecolor{colorP1}{RGB}{55,126,184}  %
\definecolor{colorP2}{RGB}{228,26,28}  %
\definecolor{colorP3}{RGB}{152,78,163} %
\definecolor{colorP4}{RGB}{77,175,74}  %
\definecolor{colorP5}{cmyk}{1.,1.,1,0}
\definecolor{colorP6}{RGB}{250, 150, 10} %
\newcommand{\tr}{\mathop{\bf tr}}
\newcommand{\rank}{\mathop{\bf rank}}
\newcommand{\cond}{q}
\newcommand{\appref}[1]{{Appendix~\ref{#1}}}
\newcommand{\argmin}[0]{\mathrm{argmin}}
\newcommand{\PRP}{PRP} 
\newcommand{\FR}{FR}
\newcommand{\GDEL}{GDEL}
\global\long\def\pg{d}%
\global\long\def\pgb{\mathbf{d}}%
\begin{document}

\title{
	\begin{minipage}{\textwidth}
		\fbox{%
			\parbox{\textwidth}{%
				
				{\small Published in Mathematical Programming Series A. DOI: \url{https://doi.org/10.1007/s10107-024-02127-7}}
			}%
		}
	\end{minipage} \\[10pt]
\textbf{\textsf{Nonlinear conjugate gradient methods: worst-case convergence rates via computer-assisted analyses}}
}

\author{Shuvomoy Das Gupta\footnote{Operations Research Center, Massachusetts Institute of Technology. Email: \textit{sdgupta@mit.edu}.}, Robert M. Freund\footnote{Sloan School of Management, Massachusetts Institute of Technology. Email: \textit{rfreund@mit.edu}.}, Xu Andy Sun\footnote{Sloan School of Management, Massachusetts Institute of Technology. Email: \textit{sunx@mit.edu}.}, Adrien Taylor\footnote{INRIA, École Normale Supérieure, CNRS, PSL Research University, Paris. \textit{adrien.taylor@inria.fr}.}}

\date{}
\maketitle

\begin{abstract}
We propose a computer-assisted approach to the analysis of the worst-case convergence of nonlinear conjugate gradient methods (NCGMs). Those methods are known for their generally good empirical performances for large-scale optimization, while having relatively incomplete analyses. Using our computer-assisted approach, we establish novel complexity bounds for the {Polak-Ribi\`ere-Polyak}~(PRP) and the Fletcher-Reeves~(FR) NCGMs for smooth strongly convex minimization. In particular, we construct mathematical proofs that establish the first
non-asymptotic convergence bound for {\FR} (which is historically the first developed NCGM), and a much improved non-asymptotic convergence bound for {\PRP}. Additionally, we provide simple adversarial examples on which these methods do not perform better than gradient descent with exact line search, leaving very little room for improvements on the same class of problems.

\end{abstract}

\section{Introduction}
We consider the standard unconstrained convex minimization problem 
\begin{equation}\label{eq:opt}
     f_\star\triangleq\min_{x\in\mathbb{R}^n} f(x),
\end{equation}
where $f$ is $L$-smooth (i.e., it has an $L$-Lipschitz gradient) and $\mu$-strongly convex. We study the worst-case performances of a few famous variants of \emph{nonlinear conjugate gradient methods} (NCGMs) for solving~\eqref{eq:opt}. More specifically, we study {Polak-Ribi\`ere-Polyak}~(PRP)~\cite{polyak1969conjugate,polak1969note} and Fletcher-Reeves~(FR)~\cite{fletcher1964function} schemes with exact line search. With exact line search, many other NCGMs such as the Hestenes and Stiefel method~\cite{hestenes1952methods}, the conjugate descent method due to Fletcher~\cite{fletcher1987practical}, and the Dai and Yuan method~\cite{dai1999nonlinear} reduce to either~{\PRP} or~{\FR}. Under exact line search,~{\PRP} and~{\FR} can be presented in the following compact form:
\begin{equation}
\begin{aligned} & \gamma_{k}\in\underset{ {\gamma \, \in \, \mathbb{R}}}{\mathrm{argmin}}\,f(x_{k}-\gamma\,d_{k}),\\
 & x_{k+1}=x_{k}-\gamma_{k}d_{k},\\
 & \beta_{k}=\frac{\|\nabla f(x_{k+1})\|^{2}-\eta\,\left\langle \nabla f(x_{k+1});\,\nabla f(x_{k})\right\rangle }{\|\nabla f(x_{k})\|^{2}},\\
 & d_{k+1}=\nabla f(x_{k+1})+\beta_{k}d_{k},
\end{aligned}
\tag{$\mathcal{M}$}\label{eq:NCG}
\end{equation}
where PRP and FR are respectively obtained by setting $\eta=1$ and $\eta=0$. NCGMs have a long history (see, e.g., the  survey~\cite{hager2006survey} and monograph~\cite{andrei2020nonlinear}), but are much less studied compared to their many first-order competitors. For instance, even though FR is generally considered the first NCGM \cite[\S 1]{hager2006survey}, we are not aware of non-asymptotic convergence results for it. On a similar note, some variants of NCGMs are known for their generally good empirical behaviors 
(which we illustrate in~\Cref{fig:log_reg}) 
with little of them being backed-up by classical complexity analyses.  
In this work, we apply the performance estimation approach~\cite{drori2014performance,taylor2017smooth} to~\eqref{eq:NCG} for filling this gap by explicitly computing some worst-case convergence properties of {\PRP} and {\FR} with exact line search. 
This work focuses on exact line search, as it is arguably the most logical starting point to understand the non-asymptotic convergence behavior of NCGMs. In certain cases, the minimizer associated with exact line search has an analytical form, while in others, it can be computed efficiently \cite[$\mathsection$9.7.1]{boyd2004convex}. However, in many practical implementations, inexact line searches are employed that try to either approximately minimize $f(x_{k}-\gamma\,d_{k})$ or even just reduce $f$ enough along the ray $x_{k}-\gamma d_{k}$. These inexact methods can be either monotone, which ensures a decrease in $f$ but converges slowly, or nonmonotone, which may allow faster convergence but risks nonrobust tuning \cite[$\mathsection$1.2]{andrei2020nonlinear}. Examples of notable monotone inexact line search schemes include backtracking \cite{armijo1966minimization}, Goldstein \cite{goldstein1965steepest}, and Wolfe line searches \cite{wolfe1969convergence, wolfe1971convergence}, and their variants \cite{hager2005new}. Nonmonotone schemes include \cite{grippo1986nonmonotone, zhang2004nonmonotone, huang2015nonmonotone} and many others; see \cite[pp.~10-14]{andrei2020nonlinear} for a brief review. Despite the computational differences between the two types of line searches, both aim to emulate the exact line search method. Consequently, when using an inexact line search process, any convergence guarantees—defined in terms of iteration numbers—are likely to be worse compared to the exact line search (neglecting the cost of performing exact line search).

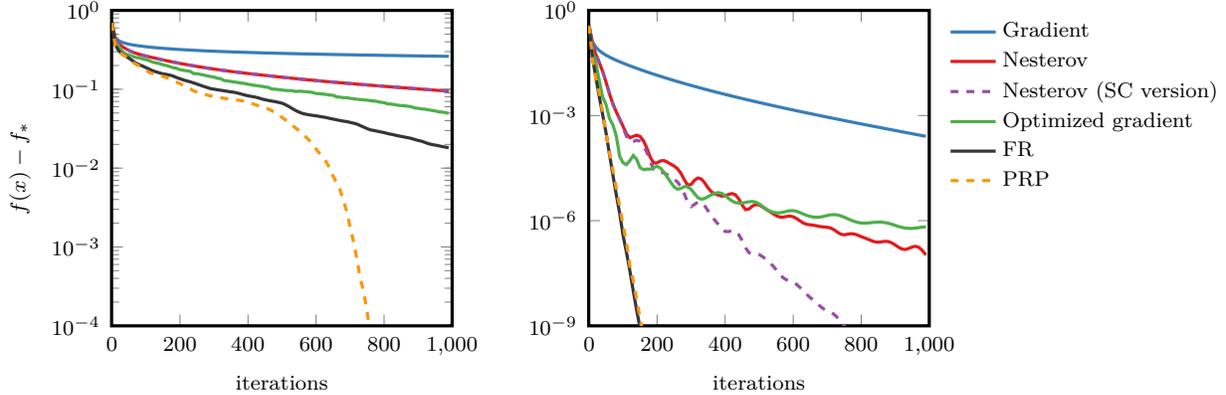
\begin{figure}[!ht]
    \centering
	    \begin{tabular}{cc}
		\begin{tikzpicture}
		\begin{semilogyaxis}[ylabel={$f(x)-f_*$}, plotOptions, ymin=1e-4, ymax=1]
		\addplot [color=colorP1] table [x=k,y=SteepestDescent] {NormalizedLogisticRegr_logregularizationInf.txt};
		\addplot [color=colorP2] table [x=k,y=AccGradient] {NormalizedLogisticRegr_logregularizationInf.txt};
		\addplot [color=colorP3, dashed] table [x=k,y=AccGradient_strCvx] {NormalizedLogisticRegr_logregularizationInf.txt};
		\addplot [color=colorP4] table [x=k,y=OGM] {NormalizedLogisticRegr_logregularizationInf.txt};
			
		\addplot [color=colorP5] table [x=k,y=NCG_FR] {NormalizedLogisticRegr_logregularizationInf.txt};
		\addplot [color=colorP6, dashed] table [x=k,y=NCG_PRP] {NormalizedLogisticRegr_logregularizationInf.txt};
		\end{semilogyaxis}
		\end{tikzpicture}
		&
		\begin{tikzpicture}
		\begin{semilogyaxis}[plotOptions, ymin=1e-9, ymax=1]
		\addplot [color=colorP1] table [x=k,y=SteepestDescent] {NormalizedLogisticRegr_logregularization4.txt};
		\addplot [color=colorP2] table [x=k,y=AccGradient] {NormalizedLogisticRegr_logregularization4.txt};
		\addplot [color=colorP3, dashed] table [x=k,y=AccGradient_strCvx] {NormalizedLogisticRegr_logregularization4.txt};
		\addplot [color=colorP4] table [x=k,y=OGM] {NormalizedLogisticRegr_logregularization4.txt};
		\addplot [color=colorP5] table [x=k,y=NCG_FR] {NormalizedLogisticRegr_logregularization4.txt};
		\addplot [color=colorP6, dashed] table [x=k,y=NCG_PRP] {NormalizedLogisticRegr_logregularization4.txt};

		\addlegendentry{Gradient}
		\addlegendentry{Nesterov}
		\addlegendentry{Nesterov (SC version)}
		\addlegendentry{Optimized gradient}
		\addlegendentry{FR}
		\addlegendentry{PRP}
		\end{semilogyaxis}
		\end{tikzpicture}
		\end{tabular}
    \caption{Convergence of a few first-order methods on a logistic regression problem on the small-sized Sonar dataset~\cite{gorman1988analysis}. Experiments with normalized features (zero mean and unit variance). Left:~without regularization. Right:~with an $\ell_2$ regularization of parameter $10^{-4}$. All methods were featured with an \textit{exact} line search ({performed numerically using the bisection method with a tolerance of $10^{-8}$}): (i)~gradient descent, (ii)~Nesterov's accelerated gradient~\cite{nesterov1983method} (exact line search instead of backtracking), (iii)~Nesterov's accelerated method for strongly convex problems, version~\cite[Algorithm 28]{d2021acceleration} with exact line search instead of the gradient step, (iv)~optimized gradient method~\cite[Algorithm  (OGM-LS)]{drori2020efficient}, (v)~{\FR}, and (vi)~{\PRP}. 
    }
    \label{fig:log_reg}
\end{figure}

\subsection{Contributions}

The contribution of this paper is twofold. First, we compute worst-case convergence bounds and counter-examples for {\PRP} and {\FR}. These bounds are obtained by formulating the problems of computing worst-case scenarios as nonconvex quadratically constrained quadratic optimization problems (QCQPs), and then by solving them to global optimality. Second, these computations enable us to construct mathematical proofs that establish an improved non-asymptotic convergence bound for {\PRP}, and, to the best of our knowledge, the first non-asymptotic convergence bound for~{\FR}. Furthermore, the worst-case bounds for {\PRP} and {\FR} obtained numerically reveal that there are simple adversarial examples on which these methods do not perform better than gradient descent with exact line search ({\GDEL}), leaving very little room for improvements on this class of problems. Since we demonstrate that the convergence results of NCGMs associated with exact line search are already disappointing, we conclude that inexact line searches, which approximate exact line search, are unlikely to offer improvement.

From a methodological point of view, our approach to computing worst-case scenarios and bounds through optimization is part of what is often referred to as \emph{performance estimation} framework. {This framework models the computation of the worst-case performance of a first-order method as an optimization problem itself; such optimization problems are called performance estimation problems~(PEP).}  While these PEPs are usually amenable to convex semidefinite programs~\cite{drori2014performance,taylor2017smooth,taylor2017exact}, this is generally not the case for \emph{adaptive} first-order methods such as {\PRP} and {\FR} \cite{barre2020complexity, barre2021worst}. To study these methods, we evaluate the worst-case performances of \eqref{eq:NCG} by solving nonconvex QCQPs, extending the standard SDP-based approach from~\cite{drori2014performance,taylor2017smooth,taylor2017exact} developed for non-adaptive methods. This contribution aligns with the spirit of~\cite{gupta2022branch}, developed for devising optimal (but non-adaptive) first-order methods.

\paragraph{Organization.} The paper is organized as follows. In Section~\ref{s:NCG_as_GD}, we establish non-asymptotic convergence
rates for {\PRP} and {\FR} by viewing the search direction $d_{k}$
in \eqref{eq:NCG} as an approximate gradient direction. In Section~\ref{sec:analysis}, we compute the exact numerical values of
the worst-case $\nicefrac{f(x_{N})-f_{\star}}{f(x_{0})-f_{\star}}$
and $\nicefrac{f(x_{k+N})-f_{\star}}{f(x_{k})-f_{\star}}$ for {\PRP}
and {\FR} by formulating the problems as nonconvex QCQPs and then
solving them to certifiable global optimality using a custom spatial branch-and-bound algorithm. The solutions to these QCQPs allow us to construct low-dimensional (dimension~$4$, observed a posteriori) counter-examples indicating that there is essentially no room for further improvement of the rates that we provide.  In Section \ref{app:ncg-pep-alg}, we discuss implementation details for solving
the nonconvex QCQPs in this paper.

\paragraph{Code.}

All the numerical results in this paper were obtained on the \texttt{MIT
Supercloud Computing Cluster} with Intel-Xeon-Platinum-8260 processor
with 48 cores and 128 GB of RAM running Ubuntu 18.04.6 LTS with Linux
4.14.250-llgrid-10ms kernel \cite{reuther2018interactive}. We used
\texttt{JuMP}---a domain specific modeling language for mathematical
optimization embedded in the open-source programming language \texttt{Julia}
\cite{JuMPDunningHuchetteLubin2017}---to model the optimization
problems. To solve the optimization problems, we use the following
solvers: \texttt{Mosek 9.3} \cite{mosek}, \texttt{KNITRO 13.0.0}
\cite{byrd2006k}, and {\texttt{Gurobi 10.0.0} \cite{Gurobi2024}}, which are free for
academic use. The relative feasibility tolerance and relative optimality tolerance of
all the solvers are set at $\textrm{1e-6}$. We validated the ``bad'' worst-case scenarios produced by our methodology using the \texttt{PEPit} package \cite{goujaud2022pepit},
which is an open-source \texttt{Python} library allowing to use the performance estimation problem (PEP) framework. %

The code used to generate and validate the results in this paper is available at: 
\begin{center}
\url{https://github.com/Shuvomoy/NCG-PEP-code}.
\end{center}

\subsection{Related works} 
Conjugate gradient (CG) methods are particularly popular choices for solving systems of linear equations and quadratic minimization problems; in this context, they are known to be information-optimal in the class of first-order methods~\cite[Chapter 12 \& Chapter 13]{nemirovski1994efficient} or~\cite[Chapter 5]{nemirovski1999optimization}. There are many extensions beyond quadratics, commonly referred to as \emph{nonlinear conjugate gradient methods} (NCGMs). They are discussed at length in the textbooks~\cite[Chapter 5 \& Chapter 7]{nocedal1999numerical} and~\cite[Chapter 5]{bonnans2006numerical} and in the nice survey~\cite{hager2006survey}. In particular, when exact line searches are used, many variants become equivalent and can be seen as instances of quasi-Newton methods, see~\cite[Chapter 7, \S``Relationship with conjugate gradient methods'']{nocedal1999numerical} or~\cite[Chapter 5,~\S5.5]{bonnans2006numerical}. For instance, it is well known that standard variants such as Hestenes-Stiefel~\cite{hestenes1952methods} and Dai-Yuan~\cite{dai1999nonlinear} are equivalent to~\eqref{eq:NCG} when exact line searches are used, while being different in the presence of more popular line search procedures (such as Wolfe's~\cite[Chapter 3]{nocedal1999numerical}). Beyond quadratics, obtaining convergence guarantees is often reduced to the problem of ensuring the search direction to be a descent direction, see for instance~\cite[\S 5.5 ``Extensions to non-quadratic problems'']{nemirovski1999optimization} or~\cite{al1985descent,hager2005new}. Without exact line searches, even when $f$ is strongly convex, there are counter-examples showing that even popular variants may not generate descent directions~\cite{dai1997analysis}. Note that NCGMs are often used together with restart strategies, which we do not consider here; see, e.g.,~\cite{royer2022nonlinear} and the references therein. Also, in~\cite[\S 5]{carmon2017convex}, the authors empirically demonstrate that NCGMs work very well in training deep learning models. 

In this work, we use the performance estimation framework, which models the computation of the worst-case performance of a first-order method as an optimization problem called PEP~\cite{drori2014performance,taylor2017smooth,taylor2017exact}. This PEP methodology is essentially mature for analyzing ``fixed-step'' (i.e., non-adaptive) first-order methods (and for methods whose analyses are amenable to those of fixed-step methods), whose stepsizes are essentially chosen in advance. This type of method includes many common first-order methods and operator splitting schemes, including the heavy-ball method~\cite{polyakintroduction} and Nesterov's accelerated gradient~\cite{nesterov1983method,beck2009fast}. Only very few adaptive methods were studied using the PEP methodology, namely gradient descent with exact line searches~\cite{de2017worst}, greedy first-order methods~\cite{drori2020efficient}, and Polyak stepsizes~\cite{barre2020complexity}. A premise to the study of NCGMs using PEPs was done in~\cite[\S 4.5.2]{barre2021worst}, where an upper bound on the worst-case $\nicefrac{(f(x_2) - f_\star)}{(f(x_0)-f_\star)}$ of {\FR} was numerically computed for two iterations and two condition number values, $q = 0.1$ and $q=0.01$, where $q\triangleq \nicefrac{\mu}{L}$. This was achieved by numerically solving an SDP relaxation through a grid search on $\beta_k$. In comparison, we compute the worst-case $\nicefrac{(f(x_N) - f_\star)}{(f(x_0)-f_\star)}$ by solving the nonconvex PEPs associated with both {\FR} and {\PRP} to global optimality across a broader range of condition numbers over $q \in [0,1]$ for $N=1,2,3,4$. Furthermore, for both methods, we also compute ``Lyapunov''-type bounds on $\nicefrac{(f(x_{k+N}) - f_\star)}{(f(x_{k})-f_\star)}$ that holds for any $k$ for $N=1,2,3,4$, and also establish their analytical complexity bounds offering a more comprehensive understanding of their performance. {We limit  our numerical experiments to $N \in \{1, 2, 3, 4\}$ for the following reasons. One reason is primarily computational: solving the underlying QCQPs to global optimality in a reasonable amount of time for $N = 3, 4$ is already quite challenging as the number of nonconvex constraints grows quadratically with $N$. To account for this numerical bottleneck,  we consider two types of complementary bounds: (i) ``Lyapunov'' bound~$\nicefrac{(f(x_{k+N}) - f_\star)}{(f(x_{k})-f_\star)}$ and (ii) bound that incorporates the initial condition~$\nicefrac{(f(x_N) - f_\star)}{(f(x_0)-f_\star)}$, where both bounds provide similar conclusions. Considering the computational challenges and also similar conclusions from two different but complementary bounds, we accepted to stop at $N=4$. } Our work is also closely related in spirit with the technique developed in~\cite{gupta2022branch} for optimizing coefficients of fixed-step first-order methods using nonconvex optimization.

\subsection{Preliminaries}

In this section, we recall the definition and
a result on smooth strongly convex functions, as well as a base result
on steepest descent with an exact line search.

\paragraph{Properties of smooth strongly convex functions.}

We use the standard notation $\langle\,\cdot\,;\,\cdot\,\rangle:\mathbb{R}^{n}\times\mathbb{R}^{n}\rightarrow\mathbb{R}$
to denote the Euclidean inner product, and the corresponding induced
Euclidean norm $\|\cdot\|$. The class of $L$-smooth $\mu$-strongly
convex functions is standard and can be defined as follows.

\begin{definition}Let
$f:\mathbb{R}^{n}\rightarrow\mathbb{R}$ be a proper, closed, and
{differentiable} convex function, and consider two constants $0\leq\mu<L<\infty$.
The function $f$ is $L$-smooth and $\mu$-strongly convex (notation
$f\in\mathcal{F}_{\mu,L}(\mathbb{R}^{n})$), if 
\begin{itemize}
\item ($L$-smooth) {for all $x,y\in\mathbb{R}^{n}$, it
holds that $\|\nabla f(x) -\nabla f(y)\| \leq L \|x-y\|$,}  
\item ($\mu$-strongly convex) for all $x,y\in\mathbb{R}^{n}$,
it holds that $f(x)\geq f(y)+\langle\nabla f(y);x-y\rangle+\frac{\mu}{2}\|x-y\|^{2}$. 
\end{itemize}

We simply denote $f\in\mathcal{F}_{\mu,L}$ when the
dimension is either clear from the context or unspecified. We also
denote by $\cond\triangleq\frac{\mu}{L}$ the inverse condition number.
For readability, we do not explicitly treat the (trivial) case $L=\mu$.
\end{definition} Smooth strongly convex functions satisfy many inequalities,
see e.g.,~\cite[Theorem 2.1.5]{nest-book-04}. For the developments
below, we need only one specific inequality characterizing functions
in $\mathcal{F}_{\mu,L}$. The following result can be found in~\cite[Theorem 4]{taylor2017smooth} and is key in our analysis. 

\begin{theorem}\cite[Theorem 4, $\mathcal{F}_{\mu,L}$-interpolation]{taylor2017smooth}\label{thm:interp}
Let $I$ be an index set and $S=\{(x_{i},g_{i},f_{i})\}_{i\in I}\subseteq\mathbb{R}^{n}\times\mathbb{R}^{n}\times\mathbb{R}$
be a set of triplets. There exists $f\in\mathcal{F}_{\mu,L}$ satisfying
$f(x_{i})=f_{i}$ and $\nabla f(x_{i})=g_{i}$ for all $i\in I$ if
and only if {
\begin{equation}
f_{i}\geq f_{j}+\langle g_{j};\,x_{i}-x_{j}\rangle+\frac{1}{2\left(1-\frac{\mu}{L}\right)}\left(\frac{1}{L}\|g_{i}-g_{j}\|^{2}+\mu\|x_{i}-x_{j}\|^{2}-2\frac{\mu}{L}\left\langle g_{i}-g_{j};\,x_{i}-x_{j}\right\rangle \right)\label{eq:interp}
\end{equation}}
holds for all $i,j\in I$. \end{theorem}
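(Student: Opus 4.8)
The plan is to reduce the statement, via a short chain of elementary transformations, to the classical interpolation result for possibly-nonsmooth convex functions: a family of triplets $\{(p_i,m_i,\ell_i)\}_{i\in I}$ is interpolated by some proper closed convex function if and only if $\ell_i\geq\ell_j+\langle m_j;\,p_i-p_j\rangle$ for all $i,j\in I$, the ``if'' direction being witnessed by $\phi(y):=\sup_{i\in I}\{\ell_i+\langle m_i;\,y-p_i\rangle\}$ (proper, closed, convex, and interpolating the data since the displayed inequality forces the $i$-th affine piece to be active at $p_i$). Two reversible ``dictionary'' operations connect this baseline to $\mathcal{F}_{\mu,L}$. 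Operation~(i): since $f\in\mathcal{F}_{\mu,L}$ if and only if $f-\tfrac{\mu}{2}\|\cdot\|^2\in\mathcal{F}_{0,L-\mu}$, subtracting $\tfrac{c}{2}\|\cdot\|^2$ trades $c$-strong convexity for plain convexity and acts on data by $(x_i,g_i,f_i)\mapsto(x_i,\,g_i-c\,x_i,\,f_i-\tfrac{c}{2}\|x_i\|^2)$. Operation~(ii): Fenchel conjugation trades $L$-smoothness of a convex function for $\tfrac1L$-strong convexity of its conjugate and acts on data by $(x_i,g_i,f_i)\mapsto(g_i,\,x_i,\,\langle x_i;\,g_i\rangle-f_i)$, using $g_i\in\partial f(x_i)\Leftrightarrow x_i\in\partial f^\star(g_i)$ and the Fenchel--Young equality $f(x_i)+f^\star(g_i)=\langle x_i;\,g_i\rangle$. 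Chaining $\mathcal{F}_{\mu,L}\xrightarrow{(\mathrm{i})}\mathcal{F}_{0,L-\mu}\xrightarrow{(\mathrm{ii})}\{\text{closed proper }\tfrac{1}{L-\mu}\text{-strongly convex}\}\xrightarrow{(\mathrm{i})}\{\text{closed proper convex}\}$ lands on the classical case.

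Tracking the data along this path, I would introduce $h_i:=f_i-\tfrac{\mu}{2}\|x_i\|^2$ and $s_i:=g_i-\mu x_i$, then $u_i:=\langle x_i;\,s_i\rangle-h_i$, and then $t_i:=x_i-\tfrac{1}{L-\mu}s_i$ and $w_i:=u_i-\tfrac{1}{2(L-\mu)}\|s_i\|^2$. A sequence of elementary expansions then shows that \eqref{eq:interp} for $\{(x_i,g_i,f_i)\}$ is equivalent, in turn, to the smooth-convex interpolation inequality $h_i\geq h_j+\langle s_j;\,x_i-x_j\rangle+\tfrac{1}{2(L-\mu)}\|s_i-s_j\|^2$ for $\{(x_i,s_i,h_i)\}$, then to the strongly-convex interpolation inequality $u_i\geq u_j+\langle x_j;\,s_i-s_j\rangle+\tfrac{1}{2(L-\mu)}\|s_i-s_j\|^2$ for $\{(s_i,x_i,u_i)\}$, and finally to the classical convex inequality $w_i\geq w_j+\langle t_j;\,s_i-s_j\rangle$ for $\{(s_i,t_i,w_i)\}$.

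For necessity I would read the chain forward: given $f\in\mathcal{F}_{\mu,L}$ interpolating the data, $h:=f-\tfrac{\mu}{2}\|\cdot\|^2$ is convex and $(L-\mu)$-smooth, hence satisfies $h(x)\geq h(y)+\langle\nabla h(y);\,x-y\rangle+\tfrac{1}{2(L-\mu)}\|\nabla h(x)-\nabla h(y)\|^2$ (see, e.g.,~\cite[Theorem 2.1.5]{nest-book-04}); evaluating at $(x_i,x_j)$ yields the smooth-convex inequality for $\{(x_i,s_i,h_i)\}$, which by the equivalences above is exactly \eqref{eq:interp}. For sufficiency I would run the chain backward: assuming \eqref{eq:interp}, the reductions produce $\{(s_i,t_i,w_i)\}$ satisfying the classical convex inequality, so the explicit $\phi$ above interpolates it; then $u:=\phi+\tfrac{1}{2(L-\mu)}\|\cdot\|^2$ is proper, closed, and $\tfrac{1}{L-\mu}$-strongly convex with $u(s_i)=u_i$ and $x_i\in\partial u(s_i)$; then $h:=u^\star$ is finite-valued, convex, differentiable, and $(L-\mu)$-smooth (the standard conjugacy between strong convexity and smoothness), with $h(x_i)=h_i$ and $\nabla h(x_i)=s_i$ by Fenchel--Young; and finally $f:=h+\tfrac{\mu}{2}\|\cdot\|^2\in\mathcal{F}_{\mu,L}$ satisfies $f(x_i)=f_i$ and $\nabla f(x_i)=g_i$, as required.

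I expect the main obstacle to be bookkeeping rather than depth. On the analytic side, I would need to justify carefully that $u^\star$ is finite on all of $\mathbb{R}^n$ with single-valued subdifferential --- which holds because $\tfrac{1}{L-\mu}$-strong convexity of $u$ forces $\mathrm{dom}\,u^\star=\mathbb{R}^n$ and differentiability of $u^\star$ --- and that $x_i\in\partial u(s_i)$ together with $u(s_i)=u_i$ transfers, through Fenchel--Young and $u^{\star\star}=u$, into $\nabla h(x_i)=s_i$ and $h(x_i)=h_i$ with no loss. On the algebraic side, the equivalences of the second paragraph are each a routine expansion, but there are several of them and a single sign slip is fatal, so I would carry out the first one in full --- noting in particular $\tfrac{1}{2(1-\mu/L)}\cdot\tfrac1L=\tfrac{1}{2(L-\mu)}$, $\tfrac{1}{2(1-\mu/L)}\cdot(-\tfrac{2\mu}{L})=-\tfrac{\mu}{L-\mu}$, and that the $\|x_i-x_j\|^2$ contributions combine to $\tfrac{\mu L}{2(L-\mu)}$ --- and treat the remaining two as direct computation.
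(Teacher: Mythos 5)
Your proposal is correct, and it is essentially the standard proof of this result: the paper itself does not reprove the theorem but imports it from \cite{taylor2017smooth}, where the argument is exactly the reduction you describe (subtract the minimal curvature $\tfrac{\mu}{2}\|\cdot\|^2$, Fenchel-conjugate to trade smoothness for strong convexity, reduce to classical convex interpolation witnessed by the supremum of affine minorants, and track the data through the equivalences). Your bookkeeping of the transformed triplets and of the coefficients in \eqref{eq:interp} checks out, so there is nothing to correct.
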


Another related result from \cite[\S 2.1]{drori2022oracle}
that we record next involves constructing a smooth and strongly convex function from a given set of triplets. In this theorem, number of elements of index set $I$ is denoted by $|I|$.

\begin{theorem}\cite[\S 2.1, smooth and strongly convex extension]{drori2022oracle}\label{thm:scvx-extension}
Suppose $I$ is a set of indices and $S=\{(x_{i},g_{i},f_{i})\}_{i\in I}\subseteq\mathbb{R}^{n}\times\mathbb{R}^{n}\times\mathbb{R}$
is a set of triplets such that \eqref{eq:interp} holds for all $i,j\in I$
for some $0\leq\mu<L<\infty$. Then the function $f:\mathbb{R}^{n}\to\mathbb{R}$
defined by 
\begin{align}
f(y)= & \max_{\alpha\in\Delta}\Big[\frac{L}{2}\|y\|^{2}-\frac{L-\mu}{2}\|y-\frac{1}{L-\mu}\sum_{i\in I}\alpha_{i}(g_{i}-\mu x_{i})\|^{2}\nonumber \\
 & \quad+\sum_{i\in I}\alpha_{i}\Big(f_{i}+\frac{1}{2(L-\mu)}\|g_{i}-Lx_{i}\|^{2}-\frac{L}{2}\|x_{i}\|^{2}\Big)\Big]\label{eq:scvx-extension}
\end{align}
where { $\Delta=\{\alpha\in\mathbb{R}^{|I|}\mid\alpha\geq0,\,\sum_{i=1}^{n}\alpha_{i}=1\},$}
satisfies $f\in\mathcal{F}_{\mu,L}(\mathbb{R}^{n})$, $f(x_{i})=f_{i}$
and $\nabla f(x_{i})=g_{i}$ for all~$i\in I$. \end{theorem}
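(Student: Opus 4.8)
The plan is to start from the explicit formula~\eqref{eq:scvx-extension}, put it into a transparent form, and then verify the three required properties in turn: $\mu$-strong convexity, $L$-smoothness, and the pointwise conditions $f(x_i)=f_i$, $\nabla f(x_i)=g_i$. Writing $c(\alpha):=\tfrac1{L-\mu}\sum_{i\in I}\alpha_i(g_i-\mu x_i)$ and expanding $\tfrac{L}{2}\|y\|^2-\tfrac{L-\mu}{2}\|y-c(\alpha)\|^2$, the formula rewrites as
\[
f(y)=\tfrac{\mu}{2}\|y\|^2+h(y),\qquad h(y):=\max_{\alpha\in\Delta}\Big[\langle y;v(\alpha)\rangle-\tfrac1{2(L-\mu)}\|v(\alpha)\|^2+B(\alpha)\Big],
\]
where $v(\alpha):=\sum_{i\in I}\alpha_i(g_i-\mu x_i)$ and $B(\alpha):=\sum_{i\in I}\alpha_i\big(f_i+\tfrac1{2(L-\mu)}\|g_i-Lx_i\|^2-\tfrac{L}{2}\|x_i\|^2\big)$ are both affine in $\alpha$. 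Since $h$ is a supremum of functions affine in $y$, it is closed and convex, so $f-\tfrac{\mu}{2}\|\cdot\|^2=h$ is convex, i.e.\ $f$ is $\mu$-strongly convex.

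For $L$-smoothness, observe that $\nabla f(x)-\nabla f(y)=\nabla h(x)-\nabla h(y)+\mu(x-y)$, so it suffices to prove that $\nabla h$ is $(L-\mu)$-Lipschitz, whence $f\in\mathcal{F}_{\mu,L}$. I would do this by identifying $h$ as a Fenchel conjugate. Define $\phi(s):=\sup\{B(\alpha):\alpha\in\Delta,\ v(\alpha)=s\}$, with $\phi(s)=-\infty$ for $s$ outside $V:=\operatorname{conv}\{g_i-\mu x_i:i\in I\}$ (here the index set $I$ is finite, as in the statement). By LP duality $\phi$ equals the infimum of a family of affine functions of $s$ (the dual objective over the dual-feasible polyhedron), hence $\phi$ is concave and upper semicontinuous with nonempty bounded domain $V$; consequently $Q(s):=\tfrac1{2(L-\mu)}\|s\|^2-\phi(s)$ is closed, proper, and $\tfrac1{L-\mu}$-strongly convex. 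Unfolding the definition of $\phi$ and nesting suprema gives $Q^*(y)=\sup_{\alpha\in\Delta}[\langle y;v(\alpha)\rangle-\tfrac1{2(L-\mu)}\|v(\alpha)\|^2+B(\alpha)]=h(y)$. The claim then follows from the classical fact that the conjugate of a $\sigma$-strongly convex closed proper function is finite-valued everywhere and differentiable with $\tfrac1\sigma$-Lipschitz gradient, so $h=Q^*$ is $(L-\mu)$-smooth.

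For the interpolation conditions, fix $i\in I$ and set $y=x_i$. Plugging $\alpha=e_i$ (the $i$-th standard basis vector) into~\eqref{eq:scvx-extension} and using $x_i-\tfrac1{L-\mu}(g_i-\mu x_i)=\tfrac1{L-\mu}(Lx_i-g_i)$, the bracketed expression collapses to exactly $f_i$, so $f(x_i)\ge f_i$. For the reverse inequality, note that for fixed $y=x_i$ the bracketed expression is \emph{concave} in $\alpha$ (the term $-\tfrac1{2(L-\mu)}\|v(\alpha)\|^2$ is concave, the rest affine), so it suffices to check the first-order optimality conditions of the concave maximization over $\Delta$ at $\alpha=e_i$, i.e.\ that shifting mass from coordinate $i$ to any coordinate $j$ does not increase the value. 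Computing the two relevant partial derivatives at $\alpha=e_i$, this inequality is precisely the $\mathcal{F}_{\mu,L}$-interpolation inequality~\eqref{eq:interp} of \Cref{thm:interp} for the ordered pair $(j,i)$, which holds by hypothesis; hence $f(x_i)=f_i$. Finally, by the previous step $f$ is differentiable, and $y\mapsto\tfrac{L}{2}\|y\|^2-\tfrac{L-\mu}{2}\|y-c(e_i)\|^2+(\text{const})$ is a differentiable minorant of $f$ touching it at $x_i$; equating gradients and simplifying gives $\nabla f(x_i)=\mu x_i+(L-\mu)c(e_i)=g_i$.

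The routine parts are the algebraic simplification of the first step and the derivative bookkeeping of the last step (which I expect to reproduce~\eqref{eq:interp} after the symmetric quadratic terms cancel). The genuinely delicate point is the smoothness argument: one must argue carefully that the LP value function $\phi$ is concave and upper semicontinuous on its bounded domain so that $Q$ is a bona fide closed strongly convex function on all of $\mathbb{R}^n$, after which $(L-\mu)$-smoothness of $h$ is immediate from the standard conjugacy result. This is the step that would require the most care.
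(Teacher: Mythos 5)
The paper does not prove this theorem at all: it is imported verbatim from the cited reference \cite[\S 2.1]{drori2022oracle}, so there is no in-paper proof to compare against. Judged on its own, your argument is correct and complete. The rewriting $f(y)=\tfrac{\mu}{2}\|y\|^2+h(y)$ with $h$ a supremum of affine functions of $y$ gives $\mu$-strong convexity; the identification $h=Q^*$ with $Q(s)=\tfrac{1}{2(L-\mu)}\|s\|^2-\phi(s)$ is exact (the nested suprema do collapse as you claim), and since $I$ is finite, the hypograph of the LP value function $\phi$ is the image of a polyhedron under a linear map, hence $\phi$ is polyhedral concave and u.s.c.\ with compact domain, so $Q$ is closed, proper and $\tfrac{1}{L-\mu}$-strongly convex and the standard conjugacy theorem gives $\nabla h$ $(L-\mu)$-Lipschitz; the triangle inequality then yields the $L$-Lipschitz bound on $\nabla f$ required by the paper's definition of $\mathcal{F}_{\mu,L}$. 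For the interpolation part, I checked your first-order computation: with $u_k:=g_k-\mu x_k$, the directional derivative of the bracket at $\alpha=e_i$, $y=x_i$ along $e_j-e_i$ equals $f_j-f_i+\langle g_j;\,x_i-x_j\rangle+\tfrac{\mu}{2}\|x_i-x_j\|^2+\tfrac{1}{2(L-\mu)}\|u_i-u_j\|^2$, whose nonpositivity is an algebraic rearrangement of \eqref{eq:interp} with $f_i$ on the left-hand side (you call this the pair $(j,i)$, the paper's indexing would call it $(i,j)$, but this is immaterial since all ordered pairs are assumed); together with concavity of the bracket in $\alpha$ this gives $f(x_i)=f_i$, and the minorant-touching argument correctly yields $\nabla f(x_i)=\mu x_i+(L-\mu)c(e_i)=g_i$. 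The only remark worth making is that your conjugacy route is one of several standard ways to get the smoothness (an equivalent route views $\tfrac{L}{2}\|y\|^2-f(y)$ as a Moreau envelope of a partial-minimization of $-B$); either way the delicate closedness point you flagged is handled by finiteness of $I$, which the statement's $\Delta\subset\mathbb{R}^{|I|}$ presupposes.
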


\paragraph{Approximate steepest descent method.}

Consider a function $f\in\mathcal{F}_{\mu,L}$ and the approximate
steepest descent method: 
\begin{equation}
\begin{aligned} & \gamma_{k}=\underset{\gamma \, \in \, \mathbb{R}}{\argmin}\,f(x_{k}-\gamma d_{k}),\\
 & x_{k+1}=x_{k}-\gamma_{k}d_{k},
\end{aligned}
\tag{\ensuremath{\mathcal{ASD}}}\label{eq:GD_ELS}
\end{equation}
where the search direction $d_{k}$ satisfies a relative error criterion:
\begin{equation}
\|d_{k}-\nabla f(x_{k})\|\leq\epsilon\|\nabla f(x_{k})\| \textup{ where }\epsilon\in[0,1). \tag{$\mathcal{REC}$}\label{eq:rel_dir}
\end{equation}

Note that the relative tolerance $\epsilon$ needs to satisfy $\epsilon \in [0,1)$ for \eqref{eq:GD_ELS} to converge. If $\epsilon \geq 1$, then $d_k = 0$ becomes feasible and \eqref{eq:GD_ELS} cannot be guaranteed to converge anymore. In practice, this means that we can pick $d_k$ to be orthogonal to $\nabla f(x_k)$ \cite[$\mathsection$5]{de2017worst}.

The iterates of \eqref{eq:GD_ELS} satisfies the following two necessary
(weaker) conditions for $x_{k+1}$ to follow~\eqref{eq:GD_ELS}:
\begin{equation}
\begin{aligned} & \left\langle \nabla f(x_{k+1});\,d_{k}\right\rangle =0,\\
 & \left\langle \nabla f(x_{k+1});\,x_{k}-x_{k+1}\right\rangle =0,
\end{aligned}
\tag{\ensuremath{\mathcal{ASD}_{\textup{relaxed}}}}\label{eq:relaxed-line-search-cond-1}
\end{equation}
where the first condition follows from optimality of $\gamma_{k}$
in the line search condition in~\eqref{eq:GD_ELS} as follows 
\begin{align}
0 & =\left[\nabla_{\gamma}f(x_{k}-\gamma\pg_{k})\right]_{\gamma=\gamma_{k}}\nonumber \\
 & =-\left\langle \nabla f(x_{k}-\gamma_{k}\pg_{k});\,\pg_{k}\right\rangle \nonumber \\
 & =-\left\langle \nabla f(x_{k+1});\,\pg_{k}\right\rangle ,\label{eq:pseudo-orthogonality-proof-ASD}
\end{align}
and the second condition comes from putting $d_{k}=\nicefrac{(x_{k}-x_{k+1})}{\gamma_{k}}$
in \eqref{eq:pseudo-orthogonality-proof-ASD}.

\paragraph{Convergence of approximate steepest descent method.}

We will use the following
result in our analysis. Note that similar results (without line searches)
to that of Theorem~\ref{thm:approx_GD} can be found in~\cite{de2020worst},
which might help in future analyses of NCGMs without exact line searches.

\begin{theorem}[{\cite[Theorem 5.1]{de2017worst}}]
\label{thm:approx_GD} Let $f\in\mathcal{F}_{\mu,L}(\mathbb{R}^{n})$,
$x_{\star}\triangleq\argmin_{x\in\mathbb{R}^{n}}f(x)$ be {the} minimizer
of $f$, and $f_{\star}\triangleq f(x_{\star})$. For any $x_{k}\in\mathbb{R}^{n}$,
search direction $d_{k}\in\mathbb{R}^{n}$, and $x_{k+1}\in\mathbb{R}^{n}$
computed using \eqref{eq:GD_ELS} such that the relative error criterion~\eqref{eq:rel_dir}
holds, we have 
\begin{equation}
f(x_{k+1})-f_{\star}\leq\left(\frac{1-\cond_{\epsilon}}{1+\cond_{\epsilon}}\right)^{2}\left(f(x_{k})-f_{\star}\right),\label{eq:ELS_inex_init}
\end{equation}
where $\cond_{\epsilon}\triangleq\nicefrac{\mu(1-\epsilon)}{L(1+\epsilon)}$. 
\end{theorem}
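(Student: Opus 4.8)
The plan is to establish \eqref{eq:ELS_inex_init} by exhibiting a single nonnegative combination of valid inequalities, in the same spirit as the performance estimation arguments used elsewhere in the paper. First I would normalize by translating so that $x_\star=0$ and $f_\star=0$, and abbreviate $g_k:=\nabla f(x_k)$, $g_{k+1}:=\nabla f(x_{k+1})$, $g_\star:=\nabla f(x_\star)=0$, $q:=\mu/L$, and $\rho:=\bigl(\tfrac{1-q_\epsilon}{1+q_\epsilon}\bigr)^2$. Then I would list the relations that any admissible configuration $\{(x_k,g_k,f(x_k)),(x_{k+1},g_{k+1},f(x_{k+1})),(x_\star,0,0)\}$ must satisfy: (a) the $\mathcal{F}_{\mu,L}$-interpolation inequalities of \Cref{thm:interp} for the relevant ordered pairs among $\{k,k+1,\star\}$; (b) the squared relative-error inequality $\|d_k\|^2-2\langle d_k;g_k\rangle+(1-\epsilon^2)\|g_k\|^2\le 0$ from \eqref{eq:rel_dir}; and (c) the two relaxed line-search identities \eqref{eq:relaxed-line-search-cond-1}, i.e., $\langle g_{k+1};d_k\rangle=0$ and $\langle g_{k+1};x_k-x_{k+1}\rangle=0$, together with $x_k-x_{k+1}=\gamma_k d_k$.

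The core step is to choose nonnegative multipliers for the inequalities in (a)--(b) and free multipliers for the equalities in (c) so that the resulting weighted sum rearranges into
\[
\rho\,\bigl(f(x_k)-f_\star\bigr)-\bigl(f(x_{k+1})-f_\star\bigr)\;\ge\; R,
\]
where $R$ is a manifestly nonnegative quadratic form in the vectors at play; I expect $R$ to be a single square of the form $c\,\|g_{k+1}-\theta g_k\|^2$ (possibly plus one more square involving the $x$'s) with explicit $c\ge 0$ and $\theta$, which makes \eqref{eq:ELS_inex_init} immediate. The value $\rho=\bigl(\tfrac{1-q_\epsilon}{1+q_\epsilon}\bigr)^2$ should emerge as precisely the smallest $\rho$ for which such nonnegative multipliers exist --- that is, as the closed-form optimum of the (small) dual SDP of this one-step PEP. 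A helpful guiding heuristic when setting up the combination: \eqref{eq:rel_dir} forces $\langle g_k;d_k\rangle\ge(1-\epsilon)\|g_k\|^2$ and $\|d_k\|\le(1+\epsilon)\|g_k\|$, so $d_k$ plays the role of the gradient of a function with ``effective'' constants $L(1+\epsilon)$ and $\mu(1-\epsilon)$; this is exactly why $q$ gets replaced by $q_\epsilon=\tfrac{\mu(1-\epsilon)}{L(1+\epsilon)}$ relative to the exact steepest-descent rate $\bigl(\tfrac{1-q}{1+q}\bigr)^2$, which is recovered at $\epsilon=0$.

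The main obstacle is pinning down the correct multipliers and then checking that the leftover $R$ is genuinely nonnegative; this is a finite but somewhat delicate computation on a $4$- or $5$-vector Gram matrix with only a handful of constraints, and ``elementary'' smoothness/strong-convexity chaining alone (which gives merely a $1-\Theta(q_\epsilon)$ rate) does not suffice for the tight constant. Concretely, I would first solve this small PEP numerically for several values of $(q,\epsilon)$, read off which interpolation inequalities and which line-search relations are active at the optimum, and use that to guess the analytic multipliers; with those in hand the remainder is routine: substitute $x_k-x_{k+1}=\gamma_k d_k$, use $\langle g_{k+1};d_k\rangle=0$ to eliminate the $g_{k+1}$ cross terms, expand, and recognize $R$ as a nonnegative square. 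The write-up would then be (1) the explicit weighted combination with stated multipliers, (2) the expansion and simplification, (3) the identification $R\ge 0$, and (4) the conclusion.
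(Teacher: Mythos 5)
The statement you are proving is not actually proved in this paper: Theorem~\ref{thm:approx_GD} is imported verbatim from \cite[Theorem 5.1]{de2017worst}, so there is no in-paper argument to match. Your plan does mirror the methodology of that reference (and of the rest of this paper): collect the $\mathcal{F}_{\mu,L}$-interpolation inequalities of Theorem~\ref{thm:interp} for the triplets at $x_k$, $x_{k+1}$, $x_\star$, the squared form of \eqref{eq:rel_dir}, and the two relaxed line-search identities \eqref{eq:relaxed-line-search-cond-1}, then exhibit nonnegative multipliers whose weighted sum rearranges into $\rho\,(f(x_k)-f_\star)-(f(x_{k+1})-f_\star)\geq R$ with $R$ a sum of squares. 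Your listed ingredients are the right ones, and your squared version of \eqref{eq:rel_dir} is stated correctly.

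The genuine gap is that the proposal stops exactly where the mathematics begins: you never exhibit the multipliers, never carry out the expansion, and never verify that the leftover $R$ is a nonnegative quadratic form, so the tight constant $\left(\frac{1-\cond_\epsilon}{1+\cond_\epsilon}\right)^2$ is asserted rather than established. ``Solve the small PEP numerically, read off the active constraints, guess the analytic multipliers, then check'' is a research procedure, not a proof; until the explicit dual certificate is written down and the term-by-term matching is done (as is done, e.g., in the proofs of Lemmas~\ref{thm:PRP_1}--\ref{thm:FR_1} and their appendices), nothing beyond an elementary $1-\Theta(\cond_\epsilon)$ rate is actually secured. Likewise, the heuristic that $d_k$ behaves like the gradient of a function with effective constants $\mu(1-\epsilon)$ and $L(1+\epsilon)$ is good intuition for why $\cond$ is replaced by $\cond_\epsilon$, but it is not a reduction one can invoke: $d_k$ is not the gradient of any function in $\mathcal{F}_{\mu(1-\epsilon),L(1+\epsilon)}$, and the exact-line-search rate cannot simply be transplanted. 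To make the argument complete you must either reproduce the explicit weighted combination from \cite{de2017worst} (which is precisely how Theorem 5.1 there is proved) or supply your own multipliers together with the algebraic identity showing $R\geq 0$; a cleaner alternative in the context of this paper is simply to cite \cite{de2017worst}, as the authors do.
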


{Next, we show that the relative error criterion~\eqref{eq:rel_dir}
can be interpreted in simple geometric fashion in the context of exact
line searches in~\eqref{eq:GD_ELS}. As \eqref{eq:GD_ELS}
uses exact line search, it is the direction of $d_{k}$ that influences
the convergence, not the magnitude. Scaling the magnitude of
$d_{k}$ by a suitable nonzero factor $\alpha=\nicefrac{\left\langle \nabla f(x_{k});\,d_{k}\right\rangle }{\|d_{k}\|^{2}}$, i.e.,  the scaled direction being $\alpha d_k$, leads to $\gamma_{k}$ getting scaled to
$\gamma_{k}/\alpha$, but this scaling does not alter $x_{k},x_{k+1}$ and we have $|\sin \angle (\nabla f(x_k), \alpha d_k)| = |\sin \angle (\nabla f(x_k), d_k)|$ (details in the proof to Corollary \ref{thm:approx_GD_sine-1} below); here we have used the notation $\angle (a,b)$ to denote the angle between two vectors $a,b$.
Hence, by appropriate scaling of search direction to $\alpha d_{k}$,
we can ensure that $|\sin \angle (\nabla f(x_k), \alpha d_k)|=\nicefrac{\|\alpha d_{k}-\nabla f(x_{k})\|}{\|\nabla f(x_{k})\|}$. Since the iterates remain the same, under the angle condition,
we will have the same convergence guarantees that hold for \eqref{eq:rel_dir} in Theorem \ref{thm:approx_GD}. Hence, ~\eqref{eq:rel_dir} implies $|\sin\theta_{k}|\leq\epsilon$. On the other hand, if $|\sin\theta_{k}|\leq\epsilon$ holds, then there exists a scaling of the search direction such that \eqref{eq:rel_dir} also holds with the same parameter $\epsilon$.   In short,~\eqref{eq:rel_dir}
is equivalent to $|\sin\theta_{k}|\leq\epsilon$ in the context of
exact line search used in \eqref{eq:GD_ELS}, which we detail in Corollary \ref{thm:approx_GD_sine-1} below. }

\begin{figure}[!ht]
\centering %
\includegraphics[width=0.6\textwidth]{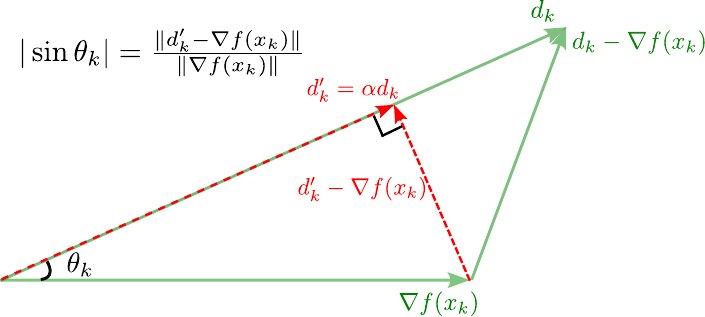}
\caption{This figure illustrates how for any $x_{k}\in\mathbb{R}^{n}$, search
direction $d_{k}\in\mathbb{R}^{n}$, and $x_{k+1}\in\mathbb{R}^{n}$
satisfying \eqref{eq:GD_ELS}, one can scale $d_{k}$ appropriately
without altering $x_{k},x_{k+1}$ such that the scaled search direction
$d_{k}^{\prime} = \alpha d_k$ with $\alpha = \nicefrac{\left\langle \nabla f(x_{k});\,d_{k}\right\rangle }{\|d_{k}\|^{2}}$ ensures $|\sin\theta_{k}|=\nicefrac{\|d_{k}^{\prime}-\nabla f(x_{k})\|}{\|\nabla f(x_{k})\|}$
with $\theta_{k}$ being the angle between $\nabla f(x_{k})$ and
$d_{k}$.  \label{fig:scaling_dk}}
\end{figure}

\begin{corollary}
\label{thm:approx_GD_sine-1} Let $f\in\mathcal{F}_{\mu,L}(\mathbb{R}^{n})$,
$x_{\star}\triangleq\argmin_{x\in\mathbb{R}^{n}}f(x)$ be the minimizer of $f$, and $f_{\star}\triangleq f(x_{\star})$. Consider any $x_{k}\in\mathbb{R}^{n}$,
search direction $d_{k}\in\mathbb{R}^{n}$, and $x_{k+1}\in\mathbb{R}^{n}$
computed using \eqref{eq:GD_ELS} such that $|\sin\theta_{k}|\leq\epsilon$
with $\theta_{k}$ being the angle between $\nabla f(x_{k})$ and
$d_{k}$ and $\epsilon\in[0,1)$. Then we have 
\begin{equation}
f(x_{k+1})-f_{\star}\leq\left(\frac{1-\cond_{\epsilon}}{1+\cond_{\epsilon}}\right)^{2}\left(f(x_{k})-f_{\star}\right),\label{eq:ELS_inex}
\end{equation}
where $\cond_{\epsilon}\triangleq\nicefrac{\mu(1-\epsilon)}{L(1+\epsilon)}$. 
\end{corollary}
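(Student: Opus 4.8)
The plan is to deduce Corollary~\ref{thm:approx_GD_sine-1} from Theorem~\ref{thm:approx_GD} by exploiting the fact that an exact line search depends only on the \emph{direction} of $d_k$, not on its norm, so one is free to rescale $d_k$ to whichever representative on its line makes the relative error criterion~\eqref{eq:rel_dir} easiest to check. First I would dispose of the degenerate case $\nabla f(x_k)=0$, in which $x_k$ is already a minimizer and~\eqref{eq:ELS_inex} holds trivially. Otherwise, since $|\sin\theta_k|\le\epsilon<1$, the vectors $\nabla f(x_k)$ and $d_k$ are not orthogonal, so $\langle\nabla f(x_k);d_k\rangle\neq0$ and in particular $d_k\neq0$; this lets me set $\alpha\triangleq\nicefrac{\langle\nabla f(x_k);d_k\rangle}{\|d_k\|^2}\neq0$ and $d_k'\triangleq\alpha d_k$, the orthogonal projection of $\nabla f(x_k)$ onto $\operatorname{span}\{d_k\}$.

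The two technical points to establish are then: (i) replacing $d_k$ by $d_k'$ in~\eqref{eq:GD_ELS} leaves $x_{k+1}$ unchanged, and (ii) $d_k'$ satisfies~\eqref{eq:rel_dir} with the same $\epsilon$. For (i), since $\gamma\mapsto f(x_k-\gamma\alpha d_k)$ and $\gamma'\mapsto f(x_k-\gamma'd_k)$ are related by the bijective substitution $\gamma'=\alpha\gamma$ (using $\alpha\neq0$), the exact line search along $d_k'$ has minimizer $\gamma_k/\alpha$, and $x_k-(\gamma_k/\alpha)d_k'=x_k-\gamma_kd_k=x_{k+1}$. For (ii), the defining property of the projection gives $\langle d_k';\,\nabla f(x_k)-d_k'\rangle=0$, so the Pythagorean identity yields $\|\nabla f(x_k)-d_k'\|^2=\|\nabla f(x_k)\|^2-\|d_k'\|^2$; and since $\|d_k'\|=|\langle\nabla f(x_k);d_k\rangle|/\|d_k\|=\|\nabla f(x_k)\|\,|\cos\theta_k|$, this gives $\|\nabla f(x_k)-d_k'\|=\|\nabla f(x_k)\|\,|\sin\theta_k|\le\epsilon\|\nabla f(x_k)\|$, which is exactly~\eqref{eq:rel_dir} for $d_k'$. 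As a side effect this also records the geometric identity $|\sin\angle(\nabla f(x_k),d_k')|=|\sin\theta_k|$ claimed in the discussion above, since $d_k'$ and $d_k$ lie on the same line and $|\sin|$ ignores the sign of $\alpha$, cf.~\Cref{fig:scaling_dk}.

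Finally, I would apply Theorem~\ref{thm:approx_GD} to $x_k$, the search direction $d_k'$, and $x_{k+1}$ --- legitimately, since by (i) $x_{k+1}$ is the iterate obtained by exact line search along $d_k'$ and by (ii) $d_k'$ obeys~\eqref{eq:rel_dir} --- to obtain $f(x_{k+1})-f_\star\le\left(\frac{1-\cond_\epsilon}{1+\cond_\epsilon}\right)^2(f(x_k)-f_\star)$ with $\cond_\epsilon=\nicefrac{\mu(1-\epsilon)}{L(1+\epsilon)}$, which is the assertion. I do not anticipate a real obstacle; the only care needed is in handling the degenerate cases and in making the ``same iterate under rescaling'' step precise (which relies on uniqueness of the one-dimensional minimizer, guaranteed by strong convexity of $f$ along the line), while the norm estimate in (ii) is the one-line Pythagorean computation above.
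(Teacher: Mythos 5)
Your proposal is correct and follows essentially the same route as the paper's proof: rescale $d_k$ by $\alpha=\nicefrac{\langle\nabla f(x_{k});\,d_{k}\rangle}{\|d_{k}\|^{2}}$ so that the scaled direction is the projection of $\nabla f(x_k)$ onto the line of $d_k$, observe the iterates are unchanged under exact line search, verify \eqref{eq:rel_dir} for the scaled direction (you do this via the Pythagorean identity where the paper argues via the right triangle in Figure~\ref{fig:scaling_dk}), and invoke Theorem~\ref{thm:approx_GD}. Your explicit treatment of the degenerate case $\nabla f(x_k)=0$ is a small addition the paper omits, but the argument is the same.
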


\begin{proof}

{
The proof sketch is as follows. As \eqref{eq:GD_ELS} uses exact line
search, it is only the direction of $d_{k}$ that influences the convergence, not its
magnitude. Hence, we can appropriately scale the search
direction $d_{k}$ to $d_{k}^{\prime}=\alpha d_{k}$ (with $\alpha=\nicefrac{\left\langle \nabla f(x_{k});\,d_{k}\right\rangle }{\|d_{k}\|^{2}} \neq0$) so that the algorithm iterates remain the same and we can ensure $|\sin \theta_k| = |\sin \angle (\nabla f(x_k), d_k^\prime)| =\nicefrac{\|d_{k}^{\prime}-\nabla f(x_{k})\|}{\|\nabla f(x_{k})\|}$.
Since the iterates remain the same, under the angle condition $|\sin\theta_{k}|\leq\epsilon$
we have the same convergence guarantees that hold for \eqref{eq:rel_dir} in Theorem \ref{thm:approx_GD}.

 }

Now we start the proof earnestly. Consider the following method, where
the search direction $d_{k}$ in \eqref{eq:GD_ELS} is scaled by some
factor $\alpha\neq0$ with the scaled search direction denoted by
$d_{k}^{\prime}=\alpha d_{k}$: 

\begin{equation}
\begin{aligned} & \gamma_{k}^{\prime}=\underset{\gamma^{\prime}}{\argmin}\,f(x_{k}-\gamma^{\prime}d_{k}^{\prime}),\\
 & x_{k+1}^{\prime}=x_{k}-\gamma_{k}^{\prime}d_{k}^{\prime},
\end{aligned}
\tag{\ensuremath{\mathcal{ASD}_{\textup{scaled}}}}\label{eq:GD_ELS-1}
\end{equation}
and we denote $\theta_{k}^{\prime}$ to be the angle between
$\nabla f(x_{k})$ and $d_{k}^{\prime}$. We now show that \eqref{eq:GD_ELS} and \eqref{eq:GD_ELS-1} are\emph{
equivalent in the sense that they generate an identical sequence of
iterates $x_{k},x_{k+1}$ along with $|\sin \theta_k| = |\sin \theta_k^\prime|$}. This is so because
\[
\gamma_{k}^{\prime}=\underset{\gamma^{\prime}}{\argmin}\,f(x_{k}-\gamma^{\prime}d_{k}^{\prime})=\underset{\gamma}{\argmin}\,f\left(x_{k}-\frac{\gamma_{k}}{\alpha}\cdot\alpha d_{k}\right)=\frac{\gamma_{k}}{\alpha},
\]
 i.e., the optimal stepsize $\gamma_{k}^{\prime}$ in \eqref{eq:GD_ELS-1}
is the optimal step-size $\gamma_{k}$ in \ref{eq:GD_ELS} scaled
by $\nicefrac{1}{\alpha}$, leading to
\[
x_{k+1}^{\prime}=x_{k}-\gamma_{k}^{\prime}d_{k}^{\prime}=x_{k}-\gamma_{k}d_{k}=x_{k+1}.
\]
Finally,

\begin{align*}|\sin\theta_{k}^{\prime}| & =\sqrt{1-\cos^{2}\theta_{k}^{\prime}}\\
 & =\sqrt{1-\frac{\left\langle \nabla f(x_{k});\,d_{k}^{\prime}\right\rangle ^{2}}{\|\nabla f(x_{k})\|^{2}\|d_{k}^{\prime}\|^{2}}}\\
 & =\sqrt{1-\frac{\cancel{\alpha}^{2}\left\langle \nabla f(x_{k});\,d_{k}\right\rangle ^{2}}{\cancel{\alpha}^{2}\|\nabla f(x_{k})\|^{2}\|d_{k}\|^{2}}}\\
 & =\sqrt{1-\cos^{2}\theta_{k}}\\
 & =|\sin\theta_{k}|.
\end{align*}

Hence to establish our convergence result \eqref{eq:ELS_inex}, we
can work with \eqref{eq:GD_ELS-1}. Next, we carefully select a nonzero
$\alpha$ that ensures $\left\langle d_{k}^{\prime};\,d_{k}^{\prime}-\nabla f(x_{k})\right\rangle =0$,
i.e., $d_{k}^{\prime}-\nabla f(x_{k})$ would be perpendicular to
$d_{k}^{\prime}$ (see Figure~\ref{fig:scaling_dk}); this yields $\alpha=\nicefrac{\left\langle \nabla f(x_{k});\,d_{k}\right\rangle }{\|d_{k}\|^{2}}$
which is nonzero because $\epsilon\in[0,1)$ implies $\left\langle \nabla f(x_{k});\,d_{k}\right\rangle \neq0$.
For this value of $\alpha$, we have $|\sin\theta_{k}|=\nicefrac{\|d_{k}^{\prime}-\nabla f(x_{k})\|}{\|\nabla f(x_{k})\|}$, which can be shown geometrically  in Figure
\ref{fig:scaling_dk} in the right triangle (colored {\color{red}{red}}) involving $\nabla f(x_{k})$,
$d_{k}^{\prime}$, and $d_{k}^{\prime}-\nabla f(x_{k})$. 

Now we are given that $|\sin\theta_{k}|\leq\epsilon$,
hence setting $\alpha=\nicefrac{\left\langle \nabla f(x_{k});\,d_{k}\right\rangle }{\|d_{k}\|^{2}}$ ensures
that the relative error criterion $\nicefrac{\|d_{k}^{\prime}-\nabla f(x_{k})\|}{\|\nabla f(x_{k})\|}\leq\epsilon$
is satisfied for \eqref{eq:GD_ELS-1}. Finally by applying Theorem \ref{thm:approx_GD} to  \eqref{eq:GD_ELS-1}, we arrive at \eqref{eq:ELS_inex}.
\end{proof}

\section{Base descent properties of NCGMs\label{s:NCG_as_GD}}

In this section, we analyze NCGMs as approximate steepest descent methods satisfying \eqref{eq:GD_ELS} through a computer-assisted approach, where only the generated search directions matter,
and not their magnitudes. This renders the analysis somewhat simpler,
and we argue that this is a reasonable setting for improving the analysis
and understanding of NCGMs.

This section builds on the intuition that when $|\sin\theta_{k}|$, where $\theta_{k}$ is the angle between the gradient and the search
direction $d_k$ at iteration $k$, is upper bounded in an appropriate fashion, one can use  Theorem \ref{thm:approx_GD} for obtaining convergence guarantees. In particular, we get nontrivial convergence guarantees as soon as $\theta_{k}$ can be bounded away from~$\pm\frac{\pi}{2}$, i.e.,
$\sin\theta_{k}$ should be bounded away from $1$ for ensuring that
$d_{k}$'s are descent directions. Of course, viewing NCGMs as approximate steepest descent methods is adversarial by nature, as it misses the point
that the directions of NCGMs are meant to be better than those of
vanilla gradient descent, while such analyses can only provide worse
rates. Additionally, in {Section \ref{subsec:PEP-reason-for-NCGM-as-steep-GD}},
we provide additional justification behind analyzing NCGMs as approximate
steepest descent methods through the lens of performance estimation
problem (PEP), where we formulate the process of computing the worst-case
$\nicefrac{f(x_{k+1})-f_{\star}}{f(x_{k})-f_{\star}}$ as optimization
problems. 

Albeit being pessimistic by construction, the analyses of this section
are, to the best of our knowledge, novel for {\FR} (for which we provide the first non-asymptotic convergence bound) and significantly better than the state-of-the-art
bound for {\PRP}. Furthermore, we show in Section~\ref{sec:num-res-prp}
and Section~\ref{subsec:Numerical-results-for-FR} that there is
actually nearly no room for improving those analyses.

\paragraph{Properties of NCGMs with exact line search.}

Before going into the detailed approach, let us review
a few properties of the iterates of~\eqref{eq:NCG}. Note that the
iterates of \eqref{eq:NCG} satisfy the following equalities: 
\begin{equation}
\begin{aligned} & \left\langle \nabla f(x_{k+1});\,d_{k}\right\rangle =0,\\
 & \left\langle \nabla f(x_{k+1});\,x_{k}-x_{k+1}\right\rangle =0,\\
 & \left\langle \nabla f(x_{k});\,\pg_{k}\right\rangle =\|\nabla f(x_{k})\|^{2},
\end{aligned}
\label{eq:relaxed-line-search-cond}
\end{equation}
where the first two equalities are the same as \eqref{eq:relaxed-line-search-cond-1}
following from exact line search. The last equality in \eqref{eq:relaxed-line-search-cond}
follows from applying the first equality to
\begin{equation}
\left\langle \nabla f(x_{k});\,\pg_{k}\right\rangle =\left\langle \nabla f(x_{k});\,\nabla f(x_{k})+\beta_{k-1}\pg_{k-1}\right\rangle =\|\nabla f(x_{k})\|^{2}.\label{eq:NCG_eq1}
\end{equation}

Combining~\eqref{eq:NCG_eq1} with
$\langle\nabla f(x_{k});d_{k}\rangle=\|\nabla f(x_{k})\|\|d_{k}\|\cos\theta_{k}$,
we obtain that $\nicefrac{\|\nabla f(x_{k})\|}{\|d_{k}\|}=\cos\theta_{k}$,
thereby reaching $\sin^{2}\theta_{k}=1-\nicefrac{\|\nabla f(x_{k})\|^{2}}{\|d_{k}\|^{2}}$.
If we have $\nicefrac{\|d_{k}\|^{2}}{\|\nabla f(x_{k})\|^{2}}\leq c$
($c\geq1$ due to \eqref{eq:relaxed-line-search-cond}), then $\sin^{2}\theta_{k}=1-\left(\nicefrac{\|\nabla f(x_{k})\|^{2}}{\|d_{k}\|^{2}}\right)\leq1-\left(\nicefrac{1}{c}\right)$,
yielding 

\begin{equation}
    |\sin\theta_{k}|\leq\sqrt{1-\nicefrac{1}{c}}. \label{eq:explicit_bound}
\end{equation}

The first two equations of \eqref{eq:relaxed-line-search-cond}, in conjunction with \eqref{eq:explicit_bound}, satisfied by NCGMs, correspond to the same set of conditions required to apply Theorem \ref{thm:approx_GD}. Thus, if we can establish an upper bound for the ratio $\nicefrac{\|d_{k}\|}{||\nabla f(x_{k})||}$ in the context of NCGMs, we can translate this into their worst-case convergence rates using Theorem \ref{thm:approx_GD}.

\paragraph{Section organization.}

In {Section \ref{subsec:PEP-reason-for-NCGM-as-steep-GD}},
we provide PEP-based perspective behind analyzing NCGMs as methods satisfying \eqref{eq:GD_ELS}. Section~\ref{sec:worst-case-search-dir},
first frames the problems of computing the worst-case $\nicefrac{\|d_{k}\|}{\|\nabla f(x_{k})\|}$
for PRP and FR as optimization problems for obtaining the desired
bounds measuring the quality of the angle~$\theta_{k}$ as PEPs. These PEPs are
nonconvex but practically tractable QCQPs and can be solved numerically
to certifiable global optimality using spatial branch-and-bound algorithms
(detailed in Section \ref{app:ncg-pep-alg}), which allows (i)~to construct
``bad'' functions serving as counter-examples
on which the worst-case $\nicefrac{\|d_{k}\|}{\|\nabla f(x_{k})\|}$
for {\PRP} and {\FR} is achieved, and (ii)~to identify closed-form
solutions to the PEPs leading to proofs that can be verified in a
standard and mathematically rigorous way. The convergence rates for
PRP and FR are provided and proved in~Section~\ref{s:NCGs_inexact_GD_results}.

\subsection{A PEP perspective behind viewing NCGMs as approximate steepest descent method \label{subsec:PEP-reason-for-NCGM-as-steep-GD}}

In this section, we provide a PEP-based perspective
behind analyzing NCGMs as approximate steepest descent methods satisfying \eqref{eq:GD_ELS}, i.e., we formulate the
problems of computing the worst-case ratios of $\nicefrac{f(x_{k+1})-f_{\star}}{f(x_{k})-f_{\star}}$
as the following optimization problem:

\begin{equation}
\left(\begin{array}{ll}
\underset{\substack{f, \, x_{k}, \, x_{k+1}, \, d_{k}, \, d_{k+1},\\
\gamma_{k}, \, \beta_{k}, \, n 
}
}{\mbox{maximize}} & \frac{f(x_{k+1})-f_{\star}}{f(x_{k})-f_{\star}}\\
\textrm{subject to} & n\in\mathbb{N},\,f\in\mathcal{F}_{\mu,L}(\mathbb{R}^{n}),\,d_{k},x_{k}\in\mathbb{R}^{n},\\
 & \left\langle \nabla f(x_{k});\,\pg_{k}\right\rangle =\|\nabla f(x_{k})\|^{2},\\
 & \|d_{k}\|^{2}\leq c\|\nabla f(x_{k})\|^{2},\\
 & (x_{k+1},d_{k+1},\beta_{k})\text{ generated by~\eqref{eq:NCG} from }x_{k}\text{ and }d_{k}.
\end{array}\right)\label{eq:PEP_rho_1}
\end{equation}
In Section \ref{subsec:NCG-PEP-ift-formulation}, we will illustrate
how we can formulate and solve~\eqref{eq:PEP_rho_1} by casting it as
a nonconvex~QCQP. Note that in~\eqref{eq:PEP_rho_1}, the second
constraint corresponds to third equation of~\eqref{eq:relaxed-line-search-cond}
and the third constraint $\|d_{k}\|^{2}\leq c\|\nabla f(x_{k})\|^{2}$
models that if $\nabla f(x_{k})=0$ then $d_{k}=0$ for~\eqref{eq:NCG}.
Note that $\nicefrac{\|d_{k}\|^{2}}{\|\nabla f(x_{k})\|^{2}} \geq 1$ because $\|\nabla f(x_{k})\|^{2}\leq\|d_{k}\|^{2}$, which follows from applying Cauchy--Schwarz inequality to \eqref{eq:NCG_eq1}.

While solving the nonconvex QCQPs equivalent to \eqref{eq:PEP_rho_1} for different values of $c$, $\mu$, and $L$, we found that the worst-case $\nicefrac{f(x_{k+1})-f_{\star}}{f(x_{k})-f_{\star}}$ is strictly monotonically increasing in $c$. Naturally, assigning an arbitrary value to $c$ would not be reasonable to get the best bound, because the search direction generated by \eqref{eq:NCG} may not admit such a value. For example, for {\PRP}, $c$ is always upper bounded by $1+\nicefrac{L^2}{\mu^2}$ as $\nicefrac{\|d_{k}\|^{2}}{\|\nabla f(x_{k})\|^{2}}\leq1+\nicefrac{L^2}{\mu^2}$ for {\PRP} \cite[Theorem 2]{polyak1969conjugate}. As we are interested in obtaining the tightest upper bound on $\nicefrac{f(x_{k+1})-f_{\star}}{f(x_{k})-f_{\star}}$, the natural question is: What is the smallest admissible value of $c$, i.e., what is the least upper bound on the ratio $\nicefrac{\|d_{k}\|^{2}}{\|\nabla f(x_{k})\|^{2}}$ generated by \eqref{eq:NCG}? To that end, we numerically computed  the least upper bound on $c$ by solving a problem similar to \eqref{eq:PEP_rho_1}, except we replaced the objective $\nicefrac{f(x_{k+1})-f_{\star}}{f(x_{k})-f_{\star}}$ with $\nicefrac{\|d_{k+1}\|^{2}}{\|\nabla f(x_{k+1})\|^{2}}$ and then replaced the indices $k,k+1$ with $k-1,k$, respectively. In Section~\ref{sec:worst-case-search-dir}, we provide the details on formulating the problems of computing the worst-case ratios of $\nicefrac{\|d_{k}\|^{2}}{\|\nabla f(x_{k})\|^{2}}$ as nonconvex QCQPs. After we computed the least upper bound on $c$ numerically, we put them in \eqref{eq:PEP_rho_1}. We then solved the associated nonconvex QCQP to global optimality, which numerically provided us with the tightest upper bound on worst-case $\nicefrac{f(x_{k+1})-f_{\star}}{f(x_{k})-f_{\star}}$. Remarkably, at this stage, we found that these numerically computed worst-case $\nicefrac{f(x_{k+1})-f_{\star}}{f(x_{k})-f_{\star}}$ for \eqref{eq:NCG} exactly matched the analytical bound prescribed in \Cref{thm:approx_GD_sine-1}. This observation provides us a justification for analyzing NCGMs as approximate steepest descent methods.

\subsection{Computing worst-case search directions}\label{sec:worst-case-search-dir}

In this section, we formulate the problems of computing the worst-case
ratios of $\nicefrac{\|d_{k}\|}{\|\nabla f(x_{k})\|}$. Following the classical steps introduced in~\cite{taylor2017smooth,taylor2017exact},
we show that it can be cast as a nonconvex~QCQP. 
 
For doing that, we assume that
at iteration $k-1$ the NCGM has not reached optimality, so $\nabla f(x_{k-1})\neq0.$
Because $\|\nabla f(x_{k-1})\|^{2}\leq\|d_{k-1}\|^{2}$ (follows from
applying Cauchy--Schwarz inequality to \eqref{eq:NCG_eq1}), without
loss of generality we define the ratio $c_{k-1}\triangleq \nicefrac{\|d_{k-1}\|^{2}}{\|\nabla f(x_{k-1})\|^{2}}$ where $c_{k-1}\geq1$. Then, denoting by $c_{k}$ the worst-case ratio $\nicefrac{\|d_{k}\|^{2}}{\|\nabla f(x_{k})\|^{2}}$
arising when applying~\eqref{eq:NCG} to the minimization of an $L$-smooth
$\mu$-strongly convex function, we will compute $c_k$ as a function of $L$, $\mu$, and $c_{k-1}$. In other words, we use a \emph{Lyapunov}-type point of view and take the stand of somewhat \emph{forgetting} about how $d_{k-1}$ was generated (except through the fact that it satisfies~\eqref{eq:relaxed-line-search-cond}). Then, we compute the worst possible next search direction $d_k$ that the algorithm could generate given that $d_{k-1}$ satisfies a certain quality. Thereby, we obtain an upper bound on the evolution of the \emph{quality} of the search directions (quantified by $c_k$) obtained throughout the iterative procedure. Formally, we compute
\begin{equation}
c_{k}(\mu,L,c_{k-1})\triangleq\left(\begin{array}{ll}
\underset{\substack{f, \, x_{k-1}, \, d_{k-1}, \\ x_k, \, d_k, \, \beta_{k-1}, \, n} }{\mbox{maximize}} & \frac{\|d_{k}\|^{2}}{\|\nabla f(x_{k})\|^{2}}\\
\textrm{subject to} & n\in\mathbb{N},\,f\in\mathcal{F}_{\mu,L}(\mathbb{R}^{n}),\,d_{k-1},x_{k-1}\in\mathbb{R}^{n},\\
 & x_{k},d_{k}\text{ and }\beta_{k-1}\text{ generated by~\eqref{eq:NCG} from }x_{k-1}\text{ and }d_{k-1},\\
 & \left\langle \nabla f(x_{k-1});\,d_{k-1}\right\rangle =\|\nabla f(x_{k-1})\|^{2},\\
 & \|d_{k-1}\|^{2}=c_{k-1}\|\nabla f(x_{k-1})\|^{2}.
\end{array}\right)\label{eq:angle_PEP}
\end{equation}

For computing $c_{k}(\mu,L,c_{k-1})$, we reformulate~\eqref{eq:angle_PEP}
as follows. Denote $I \triangleq \{ k-1, k\}$. An appropriate sampling of the variable $f$ (which
is inconveniently infinite-dimensional) allows us to cast~\eqref{eq:angle_PEP} as:
\begin{equation}
c_{k}(\mu,L,c_{k-1})=\left(\begin{array}{ll}
\underset{\substack{\{d_{i}\}_{i\in I}, \, \gamma_{k-1}, \, \beta_{k-1},\\
\{(x_{i},g_{i},f_{i})\}_{i\in I}, \, n
}
}{\mbox{maximize}} & \frac{\|d_{k}\|^{2}}{\|g_{k}\|^{2}}\\
\textrm{subject to} & n\in\mathbb{N},\,\beta_{k-1}\in\mathbb{R},\,d_{k-1},d_{k}\in\mathbb{R}^{n},\\
 & \{(x_{i},g_{i},f_{i})\}_{i\in I}\subset\mathbb{R}^{n}\times\mathbb{R}^{n}\times\mathbb{R},\\
 & \exists f\in\mathcal{F}_{\mu,L}:\left\{ \begin{array}{l}
f(x_{i})=f_{i}\\
\nabla f(x_{i})=g_{i}
\end{array}\right.\forall i\in I,\\
 & \gamma_{k-1}=\underset{\gamma}{\mathrm{argmin}}\,f(x_{k-1}-\gamma\,d_{k-1}),\\
 & x_{k}=x_{k-1}-\gamma_{k-1}d_{k-1},\\
 & \beta_{k-1}=\frac{\|g_{k}\|^{2}-\eta\left\langle g_{k};\,g_{k-1}\right\rangle }{\|g_{k-1}\|^{2}},\\
 & d_{k}=g_{k}+\beta_{k-1}d_{k-1},\\
 & \left\langle g_{k-1};\,d_{k-1}\right\rangle =\|g_{k-1}\|^{2},\\
 & \|d_{k-1}\|^{2}=c_{k-1}\|g_{k-1}\|^{2}.
\end{array}\right) \label{eq:intermed_pep}
\end{equation}

{

Using \Cref{thm:interp}, the existence constraint can be replaced
by a set of linear/quadratic inequalities~\eqref{eq:interp} for
all pairs of triplets in $\{(x_{i},g_{i},f_{i})\}_{i \in I}$
without changing the objective value. So, applying \Cref{thm:interp} to \eqref{eq:intermed_pep} followed by an homogeneity argument and a few substitutions based on \eqref{eq:relaxed-line-search-cond}, we arrive at:

\begin{equation}
c_{k}(\mu,L,c_{k-1})=\left(\begin{array}{ll}
\underset{\substack{\{d_{i}\}_{i\in I}, \, \gamma_{k-1}, \, \beta_{k-1},\\
\{(x_{i},g_{i},f_{i})\}_{i\in I}, \, n
}
}{\mbox{maximize}} & \|d_{k}\|^{2}\\
\textrm{subject to} & n\in\mathbb{N},  \,d_{k-1},x_{k-1}\in\mathbb{R}^{n},\\
 & f_{i}\geq f_{j}+\langle g_{j};\,x_{i}-x_{j}\rangle+\frac{1}{2(1-\frac{\mu}{L})}\Big(\frac{1}{L}\|g_{i}-g_{j}\|^{2}\\
 & \qquad+\mu\|x_{i}-x_{j}\|^{2}-2\frac{\mu}{L}\left\langle g_{i}-g_{j};\,x_{i}-x_{j}\right\rangle \Big),\quad i,j\in I,\\
 & \left\langle g_{k-1};\,\pg_{k-1}\right\rangle =\|g_{k-1}\|^{2},\\
 & \left\langle g_{k};\,\pg_{k-1}\right\rangle =0,\\
 & \left\langle g_{k};\,x_{k-1}-x_{k}\right\rangle =0,\\
 & x_{k}=x_{k-1}-\gamma_{k-1}\pg_{k-1},\\
 & \beta_{k-1}=\frac{\|g_{k}\|^{2}-\eta\left\langle g_{k};\,g_{k-1}\right\rangle }{\|g_{k-1}\|^{2}},\\
 & \pg_{k}=g_{k}+\beta_{k-1}\pg_{k-1}\\
 & \|d_{k-1}\|^{2}=c_{k-1}\|g_{k-1}\|^{2},\\
 & \|g_{k}\|^{2}=1.
\end{array}\right)\label{eq:c_k_fnt_intract}
\end{equation}

We now show how to transform \eqref{eq:c_k_fnt_intract} into a finite-dimensional
nonconvex QCQP based on PEP methodologies developed in ~\cite{drori2014performance,taylor2017smooth,taylor2017exact}. To that goal, note that \eqref{eq:c_k_fnt_intract} contains
function values, inner product, and norm-squared involving $\{(x_{i},g_{i},f_{i})\}_{i\in I}$ and $\{d_i\}_{i \in I}$,
to model such terms in a compact manner, we introduce the following Grammian matrices:

\begin{equation}
\begin{alignedat}{1}H & =[x_{k-1}\mid g_{k-1}\mid g_{k}\mid\pg_{k-1}]\in\mathbb{R}^{n\times4},\\
G & =H^{\top}H\in\mathbb{S}_{+}^{4},\quad\rank G\leq n,\\
F & =[f_{k-1}\mid f_{k}]\in\mathbb{R}^{1\times2}.
\end{alignedat}
\label{eq:grammian-mats-ncg-Lyapunov-improved}
\end{equation}

We next define the following notation for selecting columns and elements
of $H$ and $F$: 
\begin{equation}
\begin{alignedat}{1} & \mathbf{x}_{k-1}=e_{1},\,\mathbf{g}_{k-1}=e_{2},\,\mathbf{g}_{k}=e_{3},\,\pgb_{k-1}=e_{4},\;(\textrm{all in }\mathbb{R}^{4})\\
 & \,\mathbf{f}_{k-1}=e_{1},\,\mathbf{f}_{k}=e_{2},\;(\textrm{all in }\mathbb{R}^{2}),\\
 & \mathbf{x}_{k}=\mathbf{x}_{k-1}-\gamma_{k-1}\pgb_{k-1},\;(\textrm{all in }\mathbb{R}^{4}),\\
 & \pgb_{k}=\mathbf{g}_{k}+\beta_{k-1}\pgb_{k-1},\;(\textrm{all in }\mathbb{R}^{4}).
\end{alignedat}
\label{eq:bold_vector_worst_case_sd}
\end{equation}
This ensures that $x_{i}=H\mathbf{x}_{i}$, $g_{i}=H\mathbf{g}_{i}$,
$\pg_{i}=H\pgb_{i}$, $f_{i}=F\mathbf{f}_{i},$ for {all
$i\in I$}. Next, for appropriate choices of matrices $A_{i,j}$,
$B_{i,j}$, $C_{i,j}$, $\widetilde{C}_{i,j}$, $D_{i,j}$, $\widetilde{D}_{i,j}$,
$E_{i,j}$, and vector $a_{i,j}$, we can ensure that the following
reformulations hold for all $i,j\in I$: 
\begin{equation}
\begin{alignedat}{1} & \left\langle g_{j};\,x_{i}-x_{j}\right\rangle =\tr GA_{i,j},\\
 & \|x_{i}-x_{j}\|^{2}=\tr GB_{i,j},\\
 & \|g_{i}-g_{j}\|^{2}=\tr GC_{i,j},\;\|g_{i}\|^{2}=\tr GC_{i,\star},\\
 & \|\pg_{i}-\pg_{j}\|^{2}=\tr G\widetilde{C}_{i,j},\;\|\pg_{i}\|^{2}=\tr G\widetilde{C}_{i,\star},\\
 & \left\langle g_{i};\,g_{j}\right\rangle =\tr GD_{i,j},\\
 & \langle g_{i};\,\pg_{j}\rangle=\tr G\widetilde{D}_{i,j},\\
 & \left\langle g_{i}-g_{j};\,x_{i}-x_{j}\right\rangle =\tr GE{}_{i,j},\\
 & f_{j}-f_{i}=Fa_{i,j},
\end{alignedat}
\label{eq:effect-of-grammian-trs-ncg-Lyapunov-improved}
\end{equation}
where, using \eqref{eq:bold_vector_worst_case_sd}, and using symmetric outer product notation $\left(\cdot\odot\cdot\right)\colon \mathbb{R}^{n}\times \mathbb{R}^{n}\to \mathbb{R}^{n\times n}$ such that for any $x,y\in \mathbb{R}^{n}$,
$x\odot y=\nicefrac{1}{2}\left(xy^{\top}+yx^{\top}\right)$, we define 
\begin{equation}
\begin{alignedat}{1} & A_{i,j}=\mathbf{g}_{j}\odot(\mathbf{x}_{i}-\mathbf{x}_{j})\\
 & B_{i,j}=(\mathbf{x}_{i}-\mathbf{x}_{j})\odot(\mathbf{x}_{i}-\mathbf{x}_{j})\\
 & C_{i,j}=(\mathbf{g}_{i}-\mathbf{g}_{j})\odot(\mathbf{g}_{i}-\mathbf{g}_{j}),\;C_{i,\star}=\mathbf{g}_{i}\odot\mathbf{g}_{i},\\
 & \widetilde{C}_{i,j}=(\pgb_{i}-\pgb_{j})\odot(\pgb_{i}-\pgb_{j}),\;\widetilde{C}_{i,\star}=\pgb_{i}\odot\pgb_{i},\\
 & D_{i,j}=\mathbf{g}_{i}\odot\mathbf{g}_{j},\\
 & \widetilde{D}_{i,j}=\mathbf{g}_{i}\odot\pgb_{j},\\
 & E_{i,j}=(\mathbf{g}_{i}-\mathbf{g}_{j})\odot(\mathbf{x}_{i}-\mathbf{x}_{j}),\\
 & a_{i,j}=\mathbf{f}_{j}-\mathbf{f}_{i}.
\end{alignedat}
\label{eq:ABCa-mat-vec-ncg-Lyapunov-improved}
\end{equation}

Using \eqref{eq:ABCa-mat-vec-ncg-Lyapunov-improved}, we can write
\eqref{eq:c_k_fnt_intract} as a finite-dimensional optimization
problem with a positive-semidefinite constraint:

\begin{equation}
c_{k}(\mu,L,c_{k-1})=\left(\begin{array}{ll}
\underset{G, \, F, \, \gamma_{k-1}, \, \beta_{k-1}, \, n}{\mbox{maximize}} & \tr G\widetilde{C}_{k,\star}\\
\textrm{subject to} & \tr G\widetilde{D}_{k-1,k-1}=\tr GC_{k-1,\star},\\
 & \tr G\widetilde{D}_{k,k-1}=0,\\
 & \tr GA_{k-1,k}=0,\\
 & \beta_{k-1}\times\tr GC_{k-1,\star}=\tr G\left(C_{k,\star}-\eta D_{k,k-1}\right),\\
 & \tr G\widetilde{C}_{k-1,\star}\leq c_{k-1}\tr GC_{k-1,\star},\\
 & Fa_{i,j}+\tr G\Big[A_{i,j}\\
 & \quad+\frac{1}{2(1-\frac{\mu}{L})}\left(\frac{1}{L}C_{i,j}+\mu B_{i,j}-2\frac{\mu}{L}E{}_{i,j}\right)\Big]\leq0,\;i,j\in I,\\
 & \tr GC_{k,\star}=1,\\
 & G\in\mathbb{S}_{+}^{4},\;\rank G\leq n. \; 
\end{array}\right)\label{eq:two-point-adapep-qcqp-1}
\end{equation}
In the optimization problem above, the only constraint
involving $n$ is $\rank G\leq n$, where the optimal value of the problem is monotonically nondecreasing in $n$. As $G\in\mathbb{S}_{+}^{4}$ (implying $\rank G \leq 4$), at the optimal solution, we have $\rank G \leq n$ satisfied automatically without impacting the optimal objective value, and the worst-case function would have a dimension of less than or equal to $4$.

Next, we model the positive semidefinite constraint $G\in\mathbb{S}_{+}^{4}$
using Cholesky factorization. Recall that a matrix $Z\in\mathbb{S}^{m}$ is positive semidefinite if and only if it has a Cholesky factorization $P ^\top P=Z$, where $P\in \mathbb{R}^{m \times m}$ \cite[Corollary 7.2.9]{horn2012matrix}. Hence, positive semidefiniteness of $G$
can be reformulated as $G=\widetilde{H}^{\top}\widetilde{H}$, where
$\widetilde{H}\in\mathbb{R}^{4\times4}$, i.e., for $G = H^\top H$ in \eqref{eq:grammian-mats-ncg-Lyapunov-improved},
we can let $H\in\mathbb{R}^{4\times4}$. Thus, we can write \eqref{eq:two-point-adapep-qcqp-1}
as the following nonconvex QCQP: 
\begin{equation}
c_{k}(\mu,L,c_{k-1})=\left(\begin{array}{ll}
\underset{\substack{G, \, F, \, H, \, \gamma_{k-1}, \, \beta_{k-1}, \\ \Theta, \, \{\Theta_{i,j}\}_{i,j\in I}}}{\mbox{maximize}} & \tr G\Theta\\
\textrm{subject to} & \tr G\widetilde{D}_{k-1,k-1}=\tr GC_{k-1,\star},\\
 & \tr G\widetilde{D}_{k,k-1}=0,\\
 & \tr GA_{k-1,k}=0,\\
 & \beta_{k-1}\times\tr GC_{k-1,\star}=\tr G\left(C_{k,\star}-\eta D_{k,k-1}\right),\\
 & \tr G\widetilde{C}_{k-1,\star}\leq c_{k-1}\tr GC_{k-1,\star},\\
 & Fa_{i,j}+\tr G\Big[A_{i,j}\\
 & \quad+\frac{1}{2(1-\frac{\mu}{L})}\left(\frac{1}{L}C_{i,j}+\mu\Theta_{i,j}-2\frac{\mu}{L}E{}_{i,j}\right)\Big]\leq0,\;i,j\in I,\\
 & \Theta=\widetilde{C}_{k,\star},\quad\Theta_{i,j}=B_{i,j},\;i,j\in I,\\
 & G=H^{\top}H,\\
 & \tr GC_{k,\star}=1.
\end{array}\right)\tag{\ensuremath{\mathcal{D}}}\label{eq:two-point-adapep-qcqp}
\end{equation}
Note that in the problem above, $\Theta$ and $\{\Theta_{i,j}\}_{i,j\in I_{N}^{\star}}$
are introduced as separate decision variables to formulate the cubic
constraints arising from $\widetilde{C}_{k,\star}$ and $B_{i,j}$
as quadratic constraints, respectively. This nonconvex QCQP can
be solved to certifiable global optimality using a custom spatial
branch-and-bound algorithm described in Section \ref{app:ncg-pep-alg}.

Finally, we recall that numerical solutions to~\eqref{eq:two-point-adapep-qcqp} correspond
to worst-case functions that can be obtained through the reconstruction
procedure from Theorem~\ref{thm:scvx-extension}. In addition, numerical
solutions can serve as inspirations for devising rigorous mathematical
proofs, as presented next. 

}

\subsection{Worst-case bounds for {\PRP} and {\FR}} \label{s:NCGs_inexact_GD_results}

In this section, we provide explicit solutions to~\eqref{eq:two-point-adapep-qcqp} for PRP and FR. Those results are then used for deducing simple convergence bounds through a straightforward application of Theorem~\ref{thm:approx_GD}.

{The main benefit of our proof structures is that they are verifiable through both calculations by hand and also by symbolic computer algebra systems.  Our proofs to the lemmas in this section (Lemmas \ref{thm:PRP_1}, \ref{Bound-on-beta}, \ref{thm:FR_1}) are obtained through Lagrangian relaxation by linearly combining the constraints of associated performance estimation problems with appropriate weights, where the weights themselves correspond to dual variables for the performance estimation problems. This makes our proofs independently verifiable programmatically using open-source symbolic computation libraries such as
		\texttt{SymPy} \cite{sympy}  and \texttt{Wolfram Language} \cite{wolframLanguage}. We have provided notebooks for the symbolic verifications of our proofs to Lemmas \ref{thm:PRP_1}, \ref{Bound-on-beta}, \ref{thm:FR_1} in the \texttt{Symbolic\_Verifications} folder of our open-source code available at \texttt{\url{https://github.com/Shuvomoy/NCG-PEP-code}}.}

\subsubsection{A worst-case bound for Polak-Ribière-Polyak (PRP)} 

Solving~\eqref{eq:two-point-adapep-qcqp} with $\eta = 1$ to global optimality allows obtaining the following worst-case bound for {\PRP} quantifying the \emph{quality} of the search direction with respect to the gradient direction. 

\begin{lemma}[Worst-case search direction for {\PRP}]\label{thm:PRP_1} 
Let
$f\in\mathcal{F}_{\mu,L}$, and let $x_{k-1},d_{k-1}\in\mathbb{R}^{n}$
and $x_{k}$, $d_{k}$ be generated by the {\PRP} method (i.e.,~\eqref{eq:NCG}
with $\eta=1$). It holds that: 
\begin{equation}
\frac{\|d_{k}\|^{2}}{\|\nabla f(x_{k})\|^{2}}\leq\frac{(1+\cond)^{2}}{4\cond},\label{eq:PRP_angle}
\end{equation}
with $\cond\triangleq\nicefrac{\mu}{L}$. Equivalently, 
$|\sin\theta_{k}|\leq\epsilon$ holds, where $\theta_{k}$ is
the angle between $\nabla f(x_{k})$ and $d_{k}$
and $\epsilon=\nicefrac{(1-\cond)}{(1+\cond)}$. 
\end{lemma}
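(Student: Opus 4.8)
The plan is to establish the inequality \eqref{eq:PRP_angle} by exhibiting an explicit Lagrangian certificate for the QCQP \eqref{eq:two-point-adapep-qcqp} specialized to $\eta=1$, in the homogenized form \eqref{eq:c_k_fnt_intract}. Concretely, I would work directly with the algebraic relations rather than the Grammian reformulation: set $g_{k-1}=\nabla f(x_{k-1})$, $g_k=\nabla f(x_k)$, normalize $\|g_k\|^2=1$, and record the constraints coming from exact line search and the update rule, namely $\langle g_{k-1};d_{k-1}\rangle = \|g_{k-1}\|^2$, $\langle g_k;d_{k-1}\rangle = 0$, $\langle g_k;x_{k-1}-x_k\rangle=0$, $x_k = x_{k-1}-\gamma_{k-1}d_{k-1}$, $\beta_{k-1}\|g_{k-1}\|^2 = \|g_k\|^2 - \langle g_k;g_{k-1}\rangle$ (this is where $\eta=1$ enters), $d_k = g_k + \beta_{k-1}d_{k-1}$, and $\|d_{k-1}\|^2 = c_{k-1}\|g_{k-1}\|^2$. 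Using $\langle g_k;d_{k-1}\rangle=0$ and $d_k=g_k+\beta_{k-1}d_{k-1}$, I get the clean identity $\|d_k\|^2 = \|g_k\|^2 + \beta_{k-1}^2\|d_{k-1}\|^2 = 1 + \beta_{k-1}^2 c_{k-1}\|g_{k-1}\|^2$, so the objective reduces to controlling $\beta_{k-1}^2 c_{k-1}\|g_{k-1}\|^2$.

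Next I would feed in the only remaining source of constraints — the $\mathcal F_{\mu,L}$-interpolation inequalities \eqref{eq:interp} between the two points $x_{k-1}$ and $x_k$ (both directions $i,j$), which, after substituting $x_{k-1}-x_k = \gamma_{k-1}d_{k-1}$ and using the orthogonality relations above, become two scalar inequalities relating $\|g_{k-1}\|^2$, $\langle g_k;g_{k-1}\rangle$, $\gamma_{k-1}$, $\|d_{k-1}\|^2$, and the function-value gap $f_{k-1}-f_k$. Since $c_{k-1}$ does not actually appear in the final bound \eqref{eq:PRP_angle} (only $q=\mu/L$ does), I expect that the correct certificate will combine the two interpolation inequalities with multipliers chosen so that the $\|d_{k-1}\|^2$ and $f$-gap terms cancel, leaving a bound of the form $\beta_{k-1}^2 c_{k-1}\|g_{k-1}\|^2 \le \tfrac{(1+q)^2}{4q}-1 = \tfrac{(1-q)^2}{4q}$. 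The natural way to discover the multipliers is exactly what the paper describes: read them off from the dual solution of the numerically solved QCQP and then verify the resulting polynomial identity by hand or with \texttt{SymPy}. For the equivalence with $|\sin\theta_k|\le\epsilon$, I would invoke the computation already done in Section~\ref{s:NCG_as_GD}: $\langle g_k;d_k\rangle = \|g_k\|^2$ (third line of \eqref{eq:relaxed-line-search-cond}) gives $\cos^2\theta_k = \|g_k\|^2/\|d_k\|^2$, hence $\sin^2\theta_k = 1 - \|g_k\|^2/\|d_k\|^2 \le 1 - \tfrac{4q}{(1+q)^2} = \tfrac{(1-q)^2}{(1+q)^2} = \epsilon^2$, which is \eqref{eq:explicit_bound} with $c = (1+q)^2/(4q)$.

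The main obstacle I anticipate is pinning down the exact Lagrange multipliers: the feasible set of \eqref{eq:c_k_fnt_intract} has the bilinear coupling through $\beta_{k-1}$ and $\gamma_{k-1}$, so the "dual certificate" is not simply a nonnegative combination of the given inequalities — one must also use the equality constraints (with free-sign multipliers) and possibly a Schur-complement / SOS argument to handle the quadratic dependence on $\gamma_{k-1}$. In practice this means guessing the right value of $\gamma_{k-1}$ and $\beta_{k-1}$ at the worst case (both presumably rational functions of $q$, extracted from the numerical solution), substituting, and checking that the remaining inequality is a perfect square or a manifestly nonnegative combination. A secondary subtlety is confirming that the bound is tight — i.e., that the claimed worst-case triple actually satisfies \eqref{eq:interp}, which is guaranteed abstractly by Theorem~\ref{thm:scvx-extension} but should be stated — so that \eqref{eq:PRP_angle} cannot be improved; this is what makes the later "no room for improvement" claim legitimate. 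Everything else is routine algebra once the multipliers are in hand.
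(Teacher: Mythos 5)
Your setup matches the paper's strategy: the paper also proves \eqref{eq:PRP_angle} by a weighted-sum (Lagrangian) certificate built from exactly the constraint pool you list --- the line-search orthogonality $\langle \nabla f(x_k);d_{k-1}\rangle=0$, the two interpolation inequalities \eqref{eq:interp} between $x_{k-1}$ and $x_k$, and the PRP definition of $\beta_{k-1}$ (rewritten via $\langle \nabla f(x_{k-1});d_{k-1}\rangle=\|\nabla f(x_{k-1})\|^2$) --- and, as you anticipated, the constraint $\|d_{k-1}\|^2=c_{k-1}\|\nabla f(x_{k-1})\|^2$ is never used, which is why $c_{k-1}$ is absent from the bound. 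Your reductions (the identity $\|d_k\|^2=\|\nabla f(x_k)\|^2+\beta_{k-1}^2\|d_{k-1}\|^2$ from orthogonality, and the sine equivalence via \eqref{eq:relaxed-line-search-cond} and \eqref{eq:explicit_bound}) are correct and are also how the paper concludes.

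The gap is that the proposal stops exactly where the proof begins: no multipliers are exhibited and no algebraic identity is verified, and the shortcut you sketch for getting past this would not work as stated. First, you cannot ``guess the right value of $\gamma_{k-1}$ and $\beta_{k-1}$ at the worst case, substitute, and check a perfect square'': the lemma must hold for every $f\in\mathcal F_{\mu,L}$ and every pair $(x_{k-1},d_{k-1})$, so $\gamma_{k-1}$ and $\beta_{k-1}$ are not fixed rational functions of $q$; in the paper's certificate the weights themselves are functions of these quantities (e.g. $\lambda_1=-\beta_{k-1}^2\tfrac{1+q}{L\gamma_{k-1}q}$, $\lambda_2=\lambda_3=\tfrac{\beta_{k-1}^2(1+q)^2}{L\gamma_{k-1}^2(1-q)q}$, $\lambda_4=\tfrac{\beta_{k-1}(1+q)}{L\gamma_{k-1}q}$), and the residual is a perfect square \emph{identically} in $\gamma_{k-1},\beta_{k-1},d_{k-1},\nabla f(x_{k-1}),\nabla f(x_k)$. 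Second, your plan to choose multipliers ``so that the $\|d_{k-1}\|^2$ and $f$-gap terms cancel'' cannot be carried out literally: the two interpolation inequalities must enter with nonnegative weights, and both contribute $\|d_{k-1}\|^2$ with the same sign, so these terms cannot cancel; in the actual certificate the $f$-gaps cancel (equal weights), while the accumulated $\|d_{k-1}\|^2$ mass is split between the target term $\beta_{k-1}^2\|d_{k-1}\|^2$ inside $\|d_k\|^2$ and a leftover square of the form $\tfrac{4\beta_{k-1}^2 q}{(1-q)^2}\bigl\|d_{k-1}-\tfrac{1+q}{2L\gamma_{k-1}q}\nabla f(x_{k-1})+\tfrac{2\beta_{k-1}(1+q)-L\gamma_{k-1}(1-q)^2}{4\beta_{k-1}L\gamma_{k-1}q}\nabla f(x_k)\bigr\|^2$. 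Without producing such weights and verifying the resulting identity (by hand or symbolically), the claimed inequality $\beta_{k-1}^2\|d_{k-1}\|^2\le\tfrac{(1-q)^2}{4q}\|\nabla f(x_k)\|^2$ remains unproved; tightness, which you raise as a secondary point, is indeed handled separately in the paper (numerically, via feasible points of \eqref{eq:two-point-adapep-qcqp}) and is not needed for the lemma itself.
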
 
\begin{proof}
Recall that $x_{k}=x_{k-1}-\gamma_{k-1}\,d_{k-1}$ and $d_{k}=\nabla f(x_{k})+\beta_{k-1}d_{k-1}$.
The proof consists of the following weighted sum of inequalities: 
\begin{itemize}
\item optimality condition of the line search, with weight $\lambda_{1}=-\beta_{k-1}^{2}\frac{1+\cond}{L\gamma_{k-1}\cond}$:
\[
\langle\nabla f(x_{k});d_{k-1}\rangle=0,
\]
\item smoothness and strong convexity of $f$ between $x_{k-1}$ and $x_{k}$,
with weight $\lambda_{2}=\frac{\beta_{k-1}^{2}(1+\cond)^{2}}{L\gamma_{k-1}^{2}(1-\cond)\cond}$:
\[
\begin{aligned}f(x_{k-1})\geq & f(x_{k})+\langle\nabla f(x_{k});x_{k-1}-x_{k}\rangle+\tfrac{1}{2L}\|\nabla f(x_{k-1})-\nabla f(x_{k})\|^{2}\\
 & \quad+\tfrac{\mu}{2(1-\mu/L)}\|x_{k-1}-x_{k}-\tfrac{1}{L}(\nabla f(x_{k-1})-\nabla f(x_{k}))\|^{2}\\
= & f(x_{k})+\gamma_{k-1}\langle\nabla f(x_{k});\,d_{k-1}\rangle+\tfrac{1}{2L}\|\nabla f(x_{k-1})-\nabla f(x_{k})\|^{2}\\
 & \quad+\tfrac{\mu}{2(1-\mu/L)}\|\gamma_{k-1}d_{k-1}-\tfrac{1}{L}(\nabla f(x_{k-1})-\nabla f(x_{k}))\|^{2}
\end{aligned}
\]
\item smoothness and strong convexity of $f$ between $x_{k}$ and $x_{k-1}$,
with weight $\lambda_{3}=\lambda_{2}$: 
\[
\begin{aligned}f(x_{k})\geq & f(x_{k-1})+\langle\nabla f(x_{k-1});\,x_{k}-x_{k-1}\rangle+\tfrac{1}{2L}\|\nabla f(x_{k-1})-\nabla f(x_{k})\|^{2}\\
 & \quad+\tfrac{\mu}{2(1-\mu/L)}\|x_{k-1}-x_{k}-\tfrac{1}{L}(\nabla f(x_{k-1})-\nabla f(x_{k}))\|^{2}\\
= & f(x_{k-1})-\gamma_{k-1}\langle\nabla f(x_{k-1}),d_{k-1}\rangle+\tfrac{1}{2L}\|\nabla f(x_{k-1})-\nabla f(x_{k})\|^{2}\\
 & \quad+\tfrac{\mu}{2(1-\mu/L)}\|\gamma_{k-1}d_{k-1}-\tfrac{1}{L}(\nabla f(x_{k-1})-\nabla f(x_{k}))\|^{2}
\end{aligned}
\]
\item definition of $\beta_{k-1}$ with weight $\lambda_{4}=\frac{\beta_{k-1}(1+\cond)}{L\gamma_{k-1}\cond}$:
\[
\begin{aligned}0 & =\langle\nabla f(x_{k-1});\,\nabla f(x_{k})\rangle-\|\nabla f(x_{k})\|^{2}+\beta_{k-1}\|\nabla f(x_{k-1})\|^{2}\\
 & =\langle\nabla f(x_{k-1});\,\nabla f(x_{k})\rangle-\|\nabla f(x_{k})\|^{2}+\beta_{k-1}\langle\nabla f(x_{k-1});\,d_{k-1}\rangle.
\end{aligned}
\]
\end{itemize}
We arrive at the following weighted sum: 
\[
\begin{aligned}0\geq & \lambda_{1}\langle\nabla f(x_{k});d_{k-1}\rangle\\
 & +\lambda_{2}\bigg[f(x_{k})-f(x_{k-1})+\gamma_{k-1}\langle\nabla f(x_{k});\,d_{k-1}\rangle+\tfrac{1}{2L}\|\nabla f(x_{k-1})-\nabla f(x_{k})\|^{2}\\
 & \quad\quad\quad+\tfrac{\mu}{2(1-\mu/L)}\|\gamma_{k-1}d_{k-1}-\tfrac{1}{L}(\nabla f(x_{k-1})-\nabla f(x_{k}))\|^{2}\bigg]\\
 & +\lambda_{3}\bigg[f(x_{k-1})-f(x_{k})-\gamma_{k-1}\langle\nabla f(x_{k-1});\,d_{k-1}\rangle+\tfrac{1}{2L}\|\nabla f(x_{k-1})-\nabla f(x_{k})\|^{2}\\
 & \quad\quad\quad+\tfrac{\mu}{2(1-\mu/L)}\|\gamma_{k-1}d_{k-1}-\tfrac{1}{L}(\nabla f(x_{k-1})-\nabla f(x_{k}))\|^{2}\bigg]\\
 & +\lambda_{4}\big[\langle\nabla f(x_{k-1});\,\nabla f(x_{k})\rangle-\|\nabla f(x_{k})\|^{2}+\beta_{k-1}\langle\nabla f(x_{k-1});\,d_{k-1}\rangle\big]
\end{aligned}
\]
which can be reformulated exactly as ({by expanding both expressions and then
observing that all terms match, detailed calculations for this reformulation are provided in \appref{app:prp_reform}}) 
\[
\begin{aligned}0\geq & \|d_{k}\|^{2}-\frac{(1+\cond)^{2}}{4\cond}\|\nabla f(x_{k})\|^{2}\\
 & \quad+\frac{4\beta_{k-1}^{2}\cond}{(1-\cond)^{2}}\left\Vert d_{k-1}-\tfrac{1+\cond}{2L\gamma_{k-1}\cond}\nabla f(x_{k-1})+\tfrac{2\beta_{k-1}(1+\cond)-L\gamma_{k-1}(1-\cond)^{2}}{4\beta_{k-1}L\gamma_{k-1}\cond}\nabla f(x_{k})\right\Vert ^{2},\\
\geq & \|d_{k}\|^{2}-\frac{(1+\cond)^{2}}{4\cond}\|\nabla f(x_{k})\|^{2},
\end{aligned}
\]
thereby arriving at \eqref{eq:PRP_angle}. Finally, using \eqref{eq:explicit_bound},
we have 
$|\sin\theta_{k}|\leq\epsilon$ 
where $\epsilon=\nicefrac{(1-\cond)}{(1+\cond)}$.   \end{proof}

In Appendix~\ref{sec:num_vec_wcsd}, we numerically showcase the tightness of the worst-case bounds~\eqref{eq:PRP_angle} for {\PRP}. By tightness, we mean that we verified numerically that there exist $n\in\mathbb{N}$, functions $f\in\mathcal{F}_{\mu,L}$ and $x_{k-1},d_{k-1}\in\mathbb{R}^n$ such that ${\|d_{k}\|^{2}}=\left(\nicefrac{(1+\cond)^{2}}{4\cond}\right)\|\nabla f(x_{k})\|^{2}$. This is done by exhibiting feasible points to~\eqref{eq:two-point-adapep-qcqp} (obtained by solving~\eqref{eq:two-point-adapep-qcqp} numerically for $\eta = 1$) for different values of the inverse condition number $q$ and $c_{k-1}$. Those feasible points were verified through other ({existing}) software~\cite{goujaud2022pepit,taylor2017performance}.

The following rate is a direct consequence of \Cref{thm:PRP_1} and \Cref{thm:approx_GD}. 


\begin{theorem}[Worst-case bound for {\PRP}]\label{thm:PRP_bound} Let $f\in\mathcal{F}_{\mu,L}$, and $x_{k},d_{k}\in\mathbb{R}^n$ and $x_{k+1}$, $d_{k+1}\in\mathbb{R}^n$ be generated by respectively $k\geq 0$ and $k+1$ iterations of the {\PRP} method (i.e.,~\eqref{eq:NCG} with $\eta=1$). It holds that
    \[ f(x_{k+1})-f_\star\leq \displaystyle \left(\frac{1 - \cond^{2}}{1 + \cond^{2}}\right)^{2} \left(f(x_{k})-f_\star\right),\]
with $\cond\triangleq \nicefrac{\mu}{L}$. 
\end{theorem}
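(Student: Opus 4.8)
The plan is to obtain this bound as a direct corollary of \Cref{thm:PRP_1} and the convergence guarantee for approximate steepest descent in \Cref{thm:approx_GD_sine-1} (equivalently \Cref{thm:approx_GD}). Since every \PRP{} step performs an exact line search along $d_k$, and since the one-step decrease $f(x_{k+1})-f_\star$ established in \Cref{thm:approx_GD_sine-1} depends on $d_k$ only through the angle $\theta_k$ between $\nabla f(x_k)$ and $d_k$, it is enough to feed the angle bound from \Cref{thm:PRP_1} into \Cref{thm:approx_GD_sine-1} and simplify.

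Concretely, I would fix $k\geq 0$ and let $\theta_k$ denote the angle between $\nabla f(x_k)$ and $d_k$. For $k\geq 1$, the pair $(x_k,d_k)$ is produced by one \PRP{} step from $(x_{k-1},d_{k-1})$, so \Cref{thm:PRP_1} applied with indices shifted by one gives $|\sin\theta_k|\leq\epsilon$ with $\epsilon=\nicefrac{(1-\cond)}{(1+\cond)}$; for $k=0$ one has $d_0=\nabla f(x_0)$, hence $\theta_0=0$ and the inequality $|\sin\theta_0|\leq\epsilon$ holds trivially. In all cases $|\sin\theta_k|\leq\epsilon\in[0,1)$, so the hypotheses of \Cref{thm:approx_GD_sine-1} are satisfied for the single step $x_{k+1}=x_k-\gamma_k d_k$, yielding
\[
f(x_{k+1})-f_\star\leq\left(\frac{1-\cond_\epsilon}{1+\cond_\epsilon}\right)^{2}\big(f(x_k)-f_\star\big),\qquad \cond_\epsilon=\frac{\mu(1-\epsilon)}{L(1+\epsilon)}.
\]

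It then remains to evaluate $\cond_\epsilon$ for $\epsilon=\nicefrac{(1-\cond)}{(1+\cond)}$. A short computation gives $1-\epsilon=\nicefrac{2\cond}{(1+\cond)}$ and $1+\epsilon=\nicefrac{2}{(1+\cond)}$, so $\nicefrac{(1-\epsilon)}{(1+\epsilon)}=\cond$ and therefore $\cond_\epsilon=\cond\cdot\cond=\cond^{2}$. Substituting this back into the displayed inequality produces $f(x_{k+1})-f_\star\leq\big(\nicefrac{(1-\cond^{2})}{(1+\cond^{2})}\big)^{2}\,(f(x_k)-f_\star)$, which is the claimed bound.

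I do not expect a genuine obstacle here: essentially all the work lives in \Cref{thm:PRP_1}, and the only new content is the elementary algebra translating the angle bound $\epsilon$ into the effective inverse condition number $\cond^2$. The single point deserving explicit mention is the base case $k=0$, where $d_0$ is not generated by the $\beta_{k-1}$-recursion of \eqref{eq:NCG}; but there $d_0=\nabla f(x_0)$ forces $\theta_0=0$, so the angle hypothesis of \Cref{thm:approx_GD_sine-1} is met with room to spare, and the argument goes through uniformly for all $k\geq 0$.
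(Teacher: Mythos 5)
Your proposal is correct and follows essentially the same route as the paper: apply Lemma~\ref{thm:PRP_1} to get $|\sin\theta_k|\leq\epsilon=\nicefrac{(1-\cond)}{(1+\cond)}$, invoke Corollary~\ref{thm:approx_GD_sine-1}, and simplify $\cond_\epsilon=\cond^2$ to obtain the stated rate. Your explicit treatment of the base case $k=0$ (where $d_0=\nabla f(x_0)$) and the algebra for $\cond_\epsilon$ are details the paper leaves implicit, but the argument is the same.
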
\begin{proof}The desired claim is a direct consequence of \Cref{thm:approx_GD_sine-1} with $\epsilon=\frac{1-\cond}{1+\cond}$. That is, the {\PRP} scheme can be seen as a descent method with direction $d_k$ satisfying $\|d_{k}-\nabla f(x_{k})\|\leq \epsilon\|\nabla f(x_{k})\|$.
  \end{proof}

As a take-away from this theorem, we obtained an improved bound on the convergence rate of PRP, but possibly not in the most satisfying way: this analysis strategy does not allow beating steepest descent. Furthermore, this bound is tight for one iteration assuming that the current search direction satisfies $\nicefrac{\|d_k\|^2}{\|\nabla f(x_k)\|^2}=\nicefrac{(1+\cond)^{2}}{4\cond}$. However, it does not specify whether such an angle can be achieved on the same worst-case instances as those where~\Cref{thm:approx_GD} is achieved. In other words, there might be no worst-case instances where the bounds~\eqref{eq:ELS_inex} and~\eqref{eq:PRP_angle} are tight simultaneously, possibly leaving room for improvement in the analysis of {\PRP}. We show in~\Cref{sec:analysis} that we could indeed slightly improve this bound by taking into account the \emph{history} of the method in a more appropriate way by examining multiple iterations  of~\eqref{eq:NCG} rather than a single one.

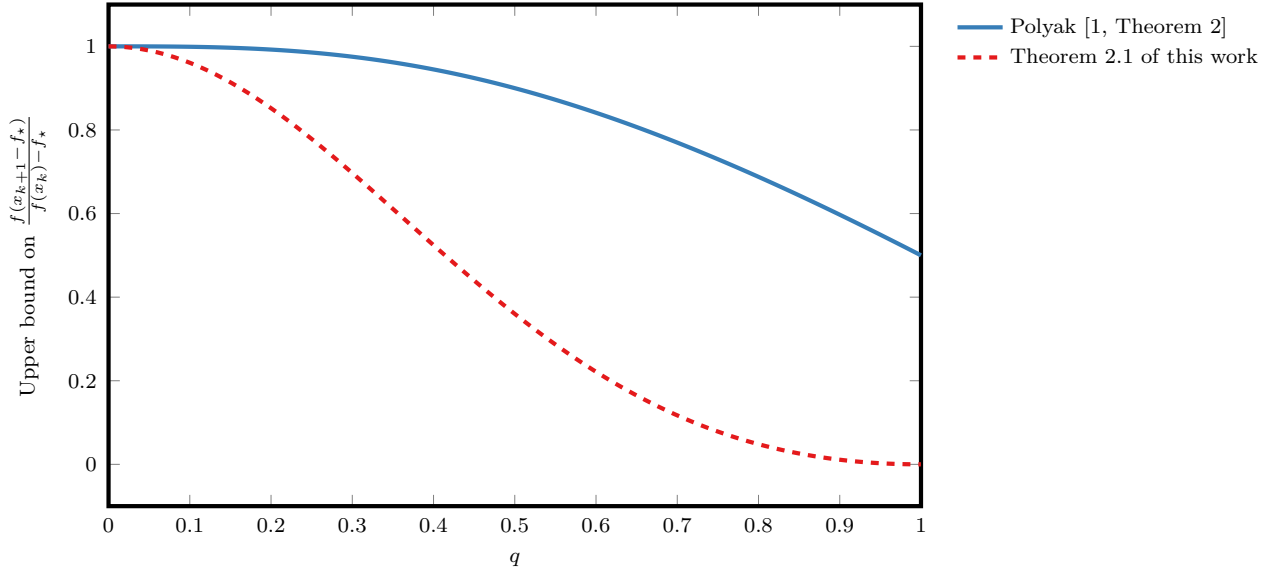
\begin{figure}[!ht]
\centering %
\begin{tikzpicture}
\begin{axis}[
    legend style= {at={(1,0.5)},anchor=west}, plotOptions7, 
    xlabel = $q$,
    ylabel = {Upper bound on $\frac{f(x_{k+1}-f_\star)}{f(x_k)-f_\star}$},
    domain=0:1,
    samples=100,
]
\addplot [color=colorP1] {1 - x/(1 + 1/x^2)};
\addlegendentry{Polyak \cite[Theorem 2]{polyak1969conjugate}}
\addplot [color = colorP2, style = {dashed}] {(1 - x^2)^2/(1 + x^2)^2};
\addlegendentry{Theorem \ref{thm:PRP_bound} of this work}
\end{axis}
\end{tikzpicture}
\caption{Comparison between the upper bounds on $\nicefrac{f(x_{k+1})-f_\star}{f(x_k)-f_\star}$ vs. condition number $q \triangleq \nicefrac{\mu}{L}$ for {\PRP} by Polyak \cite[Theorem 2]{polyak1969conjugate} and Theorem \ref{thm:PRP_bound} of this paper.\label{fig:PolyakVsPEP}}
\end{figure}

\begin{remark} The only worst-case complexity result that we are
aware of in the context of {\PRP} for smooth strongly
convex problems was provided by Polyak in~\cite[Theorem 2]{polyak1969conjugate}:

\[
f(x_{k+1})-f_{\star}\leq{\displaystyle \left(1 - \frac{\cond}{1+\frac{1}{\cond^{2}}} \right)\left(f(x_{k})-f_{\star}\right).}
\]

Figure \ref{fig:PolyakVsPEP} shows that the upper bound on $\nicefrac{f(x_{k+1})-f_\star}{f(x_k)-f_\star}$ for {\PRP} (for different values of $\cond$) provided by ~\cite[Theorem 2]{polyak1969conjugate} is significantly worse compared to that of Theorem \ref{thm:PRP_bound}.
From what we can tell,
this is due to two main weaknesses in the proof of Polyak~\cite[Theorem 2]{polyak1969conjugate}:
a weaker analysis of gradient descent, and a weaker analysis of the
direction (and in particular that $\nicefrac{\|d_{k}\|^{2}}{\|\nabla f(x_{k})\|^{2}}\leq1+\nicefrac{1}{\cond^{2}}$).
That is, whereas gradient descent with exact line searches is guaranteed
to achieve an accuracy $f(x_{k})-f_{\star}\leq\varepsilon$ in $O(\nicefrac{1}{\cond}\log\nicefrac{1}{\varepsilon})$,
our analysis provides an $O(\nicefrac{1}{\cond^{2}}\log\nicefrac{1}{\varepsilon})$
guarantee for {\PRP}, where Polyak's guarantee for {\PRP} is
$O(\nicefrac{1}{\cond^{3}}\log\nicefrac{1}{\varepsilon})$. As a reference,
note that the lower complexity bound (achieved by a few methods, including
many variations of Nesterov's accelerated gradients) is of order $O(\sqrt{\nicefrac{1}{\cond}}\log\nicefrac{1}{\varepsilon})$.
\end{remark}

\subsubsection{A worst-case bound for Fletcher-Reeves (FR)}

Similar to the obtaining of the bound for PRP, our bound for FR follows from solving~\eqref{eq:two-point-adapep-qcqp} (for $\eta=0$) in closed-form. We start by quantifying the \emph{quality} of the search direction with respect to the steepest descent direction. Unlike {\PRP}, where the worst-case ratio $\nicefrac{\|d_{k}\|^{2}}{\|\nabla f(x_{k})\|^{2}}$ depends only on the condition number $q$, in {\FR}, the ratio $\nicefrac{\|d_{k}\|^{2}}{\|\nabla f(x_{k})\|^{2}}$ depends also on the previous ratio $\nicefrac{\|d_{k-1}\|^{2}}{\|\nabla f(x_{k-1})\|^{2}}$.
To show this dependence, we first establish the following bound on the FR update parameter
$\beta_{k-1}$ in terms of $\nicefrac{\|d_{k-1}\|^{2}}{\|\nabla f(x_{k-1})\|^{2}}$ and $q$.

\begin{lemma}[Bound on $\beta_{k-1}$ for {\FR}\label{Bound-on-beta}]
 Let $f\in\mathcal{F}_{\mu,L}$, and let $x_{k-1},d_{k-1}\in\mathbb{R}^{n}$
and $x_{k}$, $d_{k}$ be generated by the {\FR} method (i.e.,~\eqref{eq:NCG}
where $c_{k-1}>1$, it holds that: 
\begin{equation}
0\leq\beta_{k-1}\leq\frac{1}{c_{k-1}}\frac{\left(1-q+2\sqrt{(c_{k-1}-1)q}\right)^{2}}{4q},\label{eq:case_2_beta_FR}
\end{equation}
where $q \triangleq \nicefrac{\mu}{L}$.
\end{lemma}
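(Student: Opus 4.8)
The first claim $\beta_{k-1}\geq 0$ is immediate: for {\FR} we have $\eta=0$, so $\beta_{k-1}=\nicefrac{\|\nabla f(x_k)\|^2}{\|\nabla f(x_{k-1})\|^2}\geq 0$ (and the denominator is nonzero since the method has not yet converged). The content is the upper bound. Writing $g_{k-1}:=\nabla f(x_{k-1})$ and $g_k:=\nabla f(x_k)$, the quantity to bound is $\rho:=\beta_{k-1}=\nicefrac{\|g_k\|^2}{\|g_{k-1}\|^2}$, which is the optimal value of a variant of the QCQP~\eqref{eq:two-point-adapep-qcqp} with $\eta=0$ (objective $\|g_k\|^2$, normalization $\|g_{k-1}\|^2=1$, using the line-search identities $\langle g_k;d_{k-1}\rangle=0$ and $\langle g_{k-1};d_{k-1}\rangle=\|g_{k-1}\|^2$ from~\eqref{eq:relaxed-line-search-cond}, and the constraint $\|d_{k-1}\|^2=c_{k-1}\|g_{k-1}\|^2$). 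Following the template of \Cref{thm:PRP_1}, the plan is to solve this problem, read off the optimal dual multipliers from the numerical solution, and present the bound as a verifiable weighted sum of those constraints.

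The chain I would assemble is as follows. First, summing the two $\mathcal{F}_{\mu,L}$-interpolation inequalities~\eqref{eq:interp} for the pair $(x_{k-1},x_k)$ with equal weights, and substituting $x_k-x_{k-1}=-\gamma_{k-1}d_{k-1}$, $\langle g_k;d_{k-1}\rangle=0$ and $\langle g_{k-1};d_{k-1}\rangle=\|g_{k-1}\|^2$, gives the co-coercivity-type estimate
\[ \gamma_{k-1}(1+q)\|g_{k-1}\|^2 \;\geq\; \tfrac1L\|g_k-g_{k-1}\|^2 + \mu\, c_{k-1}\gamma_{k-1}^2\|g_{k-1}\|^2 .\]
Adding the square $\mu c_{k-1}\|g_{k-1}\|^2\left(\gamma_{k-1}-\tfrac{1+q}{2\mu c_{k-1}}\right)^2\geq 0$ eliminates $\gamma_{k-1}$ and yields $\|g_k-g_{k-1}\|^2\leq \tfrac{(1+q)^2}{4qc_{k-1}}\|g_{k-1}\|^2$. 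Second, applying Cauchy--Schwarz to $g_k$ and $g_{k-1}^{\perp}:=g_{k-1}-\tfrac1{c_{k-1}}d_{k-1}$ (equivalently, using $G\succeq 0$) --- and noting $\langle g_{k-1}^{\perp};g_k\rangle=\langle g_{k-1};g_k\rangle$ since $\langle g_k;d_{k-1}\rangle=0$, and $\|g_{k-1}^{\perp}\|^2=(1-\tfrac1{c_{k-1}})\|g_{k-1}\|^2$ --- gives $\langle g_{k-1};g_k\rangle\leq \sqrt{1-\tfrac1{c_{k-1}}}\,\|g_{k-1}\|\,\|g_k\|$. Expanding $\|g_k-g_{k-1}\|^2=\|g_k\|^2-2\langle g_{k-1};g_k\rangle+\|g_{k-1}\|^2$ and combining the two bounds, the cross term collapses into a perfect square: with $\kappa:=\sqrt{1-\tfrac1{c_{k-1}}}$ and the identity $\tfrac{(1+q)^2}{4qc_{k-1}}-1+\kappa^2=\tfrac{(1-q)^2}{4qc_{k-1}}=:m^2$, one gets $(\|g_k\|-\kappa\|g_{k-1}\|)^2\leq m^2\|g_{k-1}\|^2$, hence $\|g_k\|\leq(\kappa+m)\|g_{k-1}\|$. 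Squaring and regrouping the radicals gives $(\kappa+m)^2=\tfrac1{c_{k-1}}\cdot\tfrac{(1-q+2\sqrt{(c_{k-1}-1)q})^2}{4q}$, which is exactly~\eqref{eq:case_2_beta_FR}.

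Repackaged as a single weighted sum of constraints of~\eqref{eq:two-point-adapep-qcqp} suitable for symbolic verification, this amounts to: weight $\lambda_A=\tfrac{L\,(1-q+2\sqrt{(c_{k-1}-1)q})}{1-q}$ on the summed interpolation inequalities, weight $\lambda_B=\tfrac{2qc_{k-1}}{(1-q)\sqrt{(c_{k-1}-1)q}}$ on the $G\succeq 0$ (Cauchy--Schwarz) inequality, together with two explicit squares --- one in $\gamma_{k-1}$ and one in $\langle g_{k-1};g_k\rangle$ --- with the reformulation then amounting to expanding everything and checking that all terms match. I expect the main obstacle to be precisely the appearance of $\sqrt{(c_{k-1}-1)q}$ in the multipliers: they are not rational in the data, so the correct combination has to be inferred from the numerical solution of the QCQP and then verified, rather than guessed outright. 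Relatedly, $\lambda_B$ (and the perfect-square step) degenerates as $c_{k-1}\to 1$, which is precisely why the statement requires $c_{k-1}>1$; the boundary case $c_{k-1}=1$ corresponds to $d_{k-1}=g_{k-1}$, i.e.\ a pure steepest-descent step, and would be handled separately (or by a limiting argument, the bound reducing to $\tfrac{(1-q)^2}{4q}$). The remaining algebra is routine and, as in \Cref{thm:PRP_1}, would be deferred to an appendix and to the \texttt{SymPy}/\texttt{Wolfram} verification notebooks.
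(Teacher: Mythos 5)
Your argument is correct: I checked the chain and each step holds — the equal-weight sum of the two interpolation inequalities together with $\langle \nabla f(x_k);d_{k-1}\rangle=0$, $\langle \nabla f(x_{k-1});d_{k-1}\rangle=\|\nabla f(x_{k-1})\|^2$ and $\|d_{k-1}\|^2=c_{k-1}\|\nabla f(x_{k-1})\|^2$ does give $\gamma_{k-1}(1+q)\|g_{k-1}\|^2\geq \tfrac1L\|g_k-g_{k-1}\|^2+\mu c_{k-1}\gamma_{k-1}^2\|g_{k-1}\|^2$; adding your square eliminates $\gamma_{k-1}$; the projection $g_{k-1}-\tfrac1{c_{k-1}}d_{k-1}$ is indeed orthogonal to $d_{k-1}$ with norm $\sqrt{1-\nicefrac1{c_{k-1}}}\,\|g_{k-1}\|$, so Cauchy--Schwarz, the identity $\tfrac{(1+q)^2}{4qc_{k-1}}-\tfrac1{c_{k-1}}=\tfrac{(1-q)^2}{4qc_{k-1}}$, and the final regrouping of radicals give exactly \eqref{eq:case_2_beta_FR}; the sign claim $\beta_{k-1}\geq0$ is immediate as in the paper. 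However, your route differs from the paper's in its structure. The paper's proof is a single weighted sum of six constraints whose multipliers themselves involve $\sqrt{\beta_{k-1}}$ and $\gamma_{k-1}$; after the perfect-square residual (a norm-squared of a fixed linear combination of $d_{k-1},\nabla f(x_{k-1}),\nabla f(x_k)$) is dropped, it yields the self-referential inequality $\beta_{k-1}\leq\nu(\beta_{k-1},\gamma_{k-1},c_{k-1},\mu,L)$, which is then resolved by maximizing the concave quadratic in $\gamma_{k-1}$ and solving a quadratic inequality in $\sqrt{\beta_{k-1}}$. You use the same two interpolation inequalities (the paper's $\lambda_5=\lambda_6$) and the same $\gamma_{k-1}$-maximization (in the guise of an added square), but you replace the $\sqrt{\beta_{k-1}}$-weighted part of the certificate by a Cauchy--Schwarz step on $\nabla f(x_k)$ against the component of $\nabla f(x_{k-1})$ orthogonal to $d_{k-1}$, followed by completing a square in the scalars $\|\nabla f(x_k)\|,\|\nabla f(x_{k-1})\|$, obtaining $\|\nabla f(x_k)\|\leq(\kappa+m)\|\nabla f(x_{k-1})\|$ directly with no $\beta_{k-1}$ on the right. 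What each buys: yours avoids guessing $\beta$-dependent dual weights and the quadratic-in-$\sqrt{\beta_{k-1}}$ step, and is more geometric; the paper's version produces a residual that is a genuine norm-squared (linear in the Gram matrix once the multipliers are fixed), which is what makes it directly checkable term-by-term in the symbolic-verification notebooks — note that your Cauchy--Schwarz and scalar completion-of-squares steps are \emph{not} linear in the Gram entries, so your proposed repackaging as a literal weighted sum of the QCQP constraints would need the multipliers evaluated at the worst case (where they essentially reproduce the paper's $\sqrt{\beta_{k-1}}$-dependent weights); this is a presentational caveat, not a gap. Your remark on the degeneracy at $c_{k-1}=1$ matches the paper's standing assumption $c_{k-1}>1$.
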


 \begin{proof}
First, note that $\beta_{k-1}\geq0$ by definition. The other part of the proof consists of the following
weighted sum of inequalities:
\begin{itemize}
\item relation between $\nabla f(x_{k-1})$ and $\pg_{k-1}$ with weight
$\lambda_{1}=\gamma_{k-1}(L+\mu)-\frac{2\sqrt{\beta_{k-1}}}{\sqrt{(c_{k-1}-1)c_{k-1}}}$: 

\[
0=\left\langle \nabla f(x_{k-1});\,\pg_{k-1}\right\rangle -\|\nabla f(x_{k-1})\|^{2},
\]

\item optimality condition of the line search with weight $\lambda_{2}=\frac{2}{c_{k-1}}-\gamma_{k-1}(L+\mu)$: 

\[
0=\left\langle \nabla f(x_{k});\,d_{k-1}\right\rangle ,
\]

\item definition of $\beta_{k-1}$ with weight $\lambda_{3}=\frac{\sqrt{c_{k-1}-1}}{\sqrt{\beta_{k-1}c_{k-1}}}$: 

\[
0=\|\nabla f(x_{k})\|^{2}-\beta_{k-1}\|\nabla f(x_{k-1})\|^{2},
\]

\item initial condition on the ratio $\frac{\|d_{k-1}\|^{2}}{\|\nabla f(x_{k-1})\|^{2}}$
with weight $\lambda_{4}=-\gamma_{k-1}^{2}L \mu +\frac{\sqrt{\beta_{k-1}}}{c_{k-1}\sqrt{(c_{k-1}-1)c_{k-1}}}$
: 
\[
0=\|\pg_{k-1}\|^{2}-c_{k-1}\|g_{k-1}\|^{2}
\]
\item smoothness and strong convexity of $f$ between $x_{k-1}$ and $x_{k}$,
with weight $\lambda_{5}=L-\mu$: 
\[
\begin{aligned}0\geq & -f(x_{k-1})+f(x_{k})+\langle\nabla f(x_{k});\,x_{k-1}-x_{k}\rangle+\tfrac{1}{2L}\|\nabla f(x_{k-1})-\nabla f(x_{k})\|^{2}\\
 & \quad+\tfrac{\mu}{2(1-\mu/L)}\|x_{k-1}-x_{k}-\tfrac{1}{L}(\nabla f(x_{k-1})-\nabla f(x_{k}))\|^{2}\\
= & f(x_{k}) { -f(x_{k-1})} +\gamma_{k-1}\langle\nabla f(x_{k});\,d_{k-1}\rangle+\tfrac{1}{2L}\|\nabla f(x_{k-1})-\nabla f(x_{k})\|^{2}\\
 & \quad+\tfrac{\mu}{2(1-\mu/L)}\|\gamma_{k-1}d_{k-1}-\tfrac{1}{L}(\nabla f(x_{k-1})-\nabla f(x_{k}))\|^{2}
\end{aligned}
\]
where going from the first line to the second, we used $x_{k-1}-x_{k}=\gamma_{k-1}d_{k-1}$,
\item smoothness and strong convexity of $f$ between $x_{k}$ and $x_{k-1}$,
with weight $\lambda_{6}=\lambda_{5}$: 
\[
\begin{aligned}0\geq & -f(x_{k})+f(x_{k-1})+\langle\nabla f(x_{k-1});\,x_{k}-x_{k-1}\rangle+\tfrac{1}{2L}\|\nabla f(x_{k-1})-\nabla f(x_{k})\|^{2}\\
 & \quad+\tfrac{\mu}{2(1-\mu/L)}\|x_{k-1}-x_{k}-\tfrac{1}{L}(\nabla f(x_{k-1})-\nabla f(x_{k}))\|^{2}\\
= & f(x_{k-1}) { -f(x_{k})}-\gamma_{k-1}\langle\nabla f(x_{k-1});\,d_{k-1}\rangle+\tfrac{1}{2L}\|\nabla f(x_{k-1})-\nabla f(x_{k})\|^{2}\\
 & \quad+\tfrac{\mu}{2(1-\mu/L)}\|\gamma_{k-1}d_{k-1}-\tfrac{1}{L}(\nabla f(x_{k-1})-\nabla f(x_{k}))\|^{2}
\end{aligned}
\]
where going from the first line to the second, we again used $x_{k-1}-x_{k}=\gamma_{k-1}d_{k-1}$,
\end{itemize}
The final weighted sum of inequalities is:
\begin{align*}
0 & \geq\lambda_{1}\left[\left\langle \nabla f(x_{k-1});\,\pg_{k-1}\right\rangle -\|\nabla f(x_{k-1})\|^{2}\right]+\lambda_{2}\left[\left\langle \nabla f(x_{k});\,d_{k-1}\right\rangle \right]\\
 & +\lambda_{3}\left[\|\nabla f(x_{k})\|^{2}-\beta_{k-1}\|\nabla f(x_{k-1})\|^{2}\right]+\lambda_{4}\left[\|\pg_{k-1}\|^{2}-c_{k-1}\|g_{k-1}\|^{2}\right]\\
 & +\lambda_{5}\Big[f(x_{k})-f(x_{k-1})+\gamma_{k-1}\langle\nabla f(x_{k});\,d_{k-1}\rangle+\tfrac{1}{2L}\|\nabla f(x_{k-1})-\nabla f(x_{k})\|^{2}\\
 & \quad\quad+\tfrac{\mu}{2(1-\mu/L)}\|\gamma_{k-1}d_{k-1}-\tfrac{1}{L}(\nabla f(x_{k-1})-\nabla f(x_{k}))\|^{2}\Big]\\
 & +\lambda_{6}\Big[f(x_{k-1})-f(x_{k})-\gamma_{k-1}\langle\nabla f(x_{k-1});\,d_{k-1}\rangle+\tfrac{1}{2L}\|\nabla f(x_{k-1})-\nabla f(x_{k})\|^{2}\\
 & \quad\quad+\tfrac{\mu}{2(1-\mu/L)}\|\gamma_{k-1}d_{k-1}-\tfrac{1}{L}(\nabla f(x_{k-1})-\nabla f(x_{k}))\|^{2}\Big],
\end{align*}
which can be reformulated exactly as ({by expanding both expressions and then
observing that all terms match, detailed calculations for this reformulation are provided in \appref{app:reform_lemma_2_2}}):

\begin{align*}
0\geq & \|\nabla f(x_{k})\|^{2}-\nu(\beta_{k-1},\gamma_{k-1},c_{k-1},\mu,L)\|\nabla f(x_{k-1})\|^{2}\\
 & +\left\Vert \sqrt[4]{\frac{\beta_{k-1}}{(c_{k-1}-1)c_{k-1}^{3}}}d_{k-1}-\sqrt[4]{\frac{\beta_{k-1}c_{k-1}}{c_{k-1}-1}}\nabla f(x_{k-1})+\sqrt[4]{\frac{c_{k-1}-1}{\beta_{k-1}c_{k-1}}}\nabla f(x_{k})\right\Vert ^{2}\\
\geq & \|\nabla f(x_{k})\|^{2}-\nu(\beta_{k-1},\gamma_{k-1},c_{k-1},\mu,L)\|\nabla f(x_{k-1})\|^{2},
\end{align*}
where 
\[
\nu(\beta_{k-1},\gamma_{k-1},c_{k-1},\mu,L)=2\sqrt{1-\frac{1}{c_{k-1}}}\sqrt{\beta_{k-1}}-c_{k-1}\gamma_{k-1}^{2}L\mu+\gamma_{k-1}(L+\mu)-1.
\]
So, we have: 
\begin{equation*}
    \begin{aligned}
  \beta_{k-1}&\leq\nu(\beta_{k-1},\gamma_{k-1},c_{k-1},\mu,L)\\
&\Leftrightarrow  \beta_{k-1}-2\sqrt{1-\frac{1}{c_{k-1}}}\sqrt{\beta_{k-1}}\leq-c_{k-1}\gamma_{k-1}^{2}L\mu +\gamma_{k-1}(L+\mu)-1\\
&\Rightarrow  \beta_{k-1}-2\sqrt{1-\frac{1}{c_{k-1}}}\sqrt{\beta_{k-1}}\leq\max_{\gamma}\left(-c_{k-1}\gamma_{k-1}^{2}L\mu +\gamma_{k-1}(L+\mu)-1\right).
    \end{aligned}
\end{equation*}
Because, $-c_{k-1}\gamma_{k-1}^{2}L\mu +\gamma_{k-1}(L+\mu)-1$ is
a concave function in $\gamma_{k-1},$ its maximum can be achieved
by differentiating the term with respect to $\gamma_{k-1},$ equating
it to $0$, and then solving for $\gamma_{k-1}$. The corresponding maximum value is equal to $\nicefrac{(L+\mu)^{2}}{4c_{k-1}L \mu}-1$ and achieved at $\gamma_{k-1}=\nicefrac{(L+\mu)}{(2c_{k-1}L\mu)}$. Hence,
the last inequality becomes:
\begin{equation*}
    \begin{aligned}
          \beta_{k-1}-&2\sqrt{1-\frac{1}{c_{k-1}}}\sqrt{\beta_{k-1}}-\frac{(L+\mu)^{2}}{4c_{k-1}L\mu}+1\leq0\\
&\Leftrightarrow \left(\sqrt{\beta_{k-1}}\right)^{2}-2\sqrt{1-\frac{1}{c_{k-1}}}\sqrt{\beta_{k-1}}+\left(\sqrt{1-\frac{1}{c_{k-1}}}\right)^{2}-\frac{(L+\mu)^{2}}{4c_{k-1}L\mu}-\left(\sqrt{1-\frac{1}{c_{k-1}}}\right)^{2}+1\leq0\\
&\Leftrightarrow  \left(\sqrt{\beta_{k-1}}-\sqrt{1-\frac{1}{c_{k-1}}}\right)^{2}\leq\frac{(L+\mu)^{2}}{4c_{k-1}L\mu}+\cancel{1}-\frac{1}{c_{k-1}}-\cancel{1}=\frac{1}{c_{k-1}}\left(\frac{(L+\mu)^{2}}{4L\mu}-1\right)\\
&\Leftrightarrow  \sqrt{\beta_{k-1}}\leq\sqrt{1-\frac{1}{c_{k-1}}}+\sqrt{\frac{(L+\mu)^{2}}{4c_{k-1}L\mu }-\frac{1}{c_{k-1}}}.
    \end{aligned}
\end{equation*}
Thereby, squaring both sides (which are nonnegative) of the last inequality and then through some algebra, we reach
\begin{align*}
\beta_{k-1} & \leq1+\frac{(L-\mu)}{c_{k-1}}\sqrt{\frac{(c_{k-1}-1)}{\mu L}}+\frac{\mu^{2}-6\mu L+L^{2}}{4c_{k-1}\mu L}\\
 & =\frac{1}{c_{k-1}}\frac{\left(1-q+2\sqrt{(c_{k-1}-1)q}\right)^{2}}{4q},
\end{align*}

{ which completes the proof. } 

  \end{proof}

Next, we prove a bound quantifying the quality of the search directions of {\FR}.

\begin{lemma}[Worst-case search direction for FR]\label{thm:FR_1}
Let $f\in\mathcal{F}_{\mu,L}$, and let $x_{k-1},d_{k-1}\in\mathbb{R}^{n}$
and $x_{k}$, $d_{k}$ be generated by the {\FR} method (i.e.,~\eqref{eq:NCG}
with $\eta=0$). For any $c_{k-1}\in\mathbb{R}$ such that $\nicefrac{\|d_{k-1}\|^{2}}{\|\nabla f(x_{k-1})\|^{2}}=c_{k-1}$,
where $c_{k-1}>1$, it holds that: 
\begin{equation}
\frac{\|d_{k}\|^{2}}{\|\nabla f(x_{k})\|^{2}}\leq c_{k}\triangleq1+\frac{\left(1-\cond+2\sqrt{(c_{k-1}-1)\cond}\right)^{2}}{4\cond},\label{eq:FR_angle}
\end{equation}
with $\cond\triangleq\nicefrac{\mu}{L}$. 

Equivalently, 
$|\sin\theta_{k}|\leq\epsilon$ holds, where $\theta_{k}$ is
the angle between $\nabla f(x_{k})$ and $d_{k}$
holds with $\epsilon=\sqrt{1-\nicefrac{1}{c_{k}}}$. \end{lemma}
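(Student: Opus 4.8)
The plan is to reduce the statement to \Cref{Bound-on-beta} by first deriving an \emph{exact} expression for $\nicefrac{\|d_k\|^2}{\|\nabla f(x_k)\|^2}$ in terms of $\beta_{k-1}$ and $c_{k-1}$, and then plugging in the bound on $\beta_{k-1}$ already established. First I would expand $d_k=\nabla f(x_k)+\beta_{k-1}d_{k-1}$ and invoke the exact line search orthogonality $\langle\nabla f(x_k);d_{k-1}\rangle=0$ (the first equality in~\eqref{eq:relaxed-line-search-cond}), so that the cross term vanishes and $\|d_k\|^2=\|\nabla f(x_k)\|^2+\beta_{k-1}^2\|d_{k-1}\|^2$. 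Since $c_{k-1}>1$ we have $\nabla f(x_{k-1})\neq0$, so all ratios below are well defined.

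Next I would use the two facts specific to {\FR} and to the hypothesis: the {\FR} rule $\beta_{k-1}=\nicefrac{\|\nabla f(x_k)\|^2}{\|\nabla f(x_{k-1})\|^2}$ (i.e.,~\eqref{eq:NCG} with $\eta=0$), which gives $\beta_{k-1}\|\nabla f(x_{k-1})\|^2=\|\nabla f(x_k)\|^2$, and the normalization $\|d_{k-1}\|^2=c_{k-1}\|\nabla f(x_{k-1})\|^2$. Substituting, $\beta_{k-1}^2\|d_{k-1}\|^2=c_{k-1}\beta_{k-1}^2\|\nabla f(x_{k-1})\|^2=c_{k-1}\beta_{k-1}\|\nabla f(x_k)\|^2$, whence the clean identity
\[
\frac{\|d_k\|^2}{\|\nabla f(x_k)\|^2}=1+c_{k-1}\beta_{k-1}.
\]

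Then I would invoke \Cref{Bound-on-beta}, which gives $\beta_{k-1}\leq\frac{1}{c_{k-1}}\frac{(1-\cond+2\sqrt{(c_{k-1}-1)\cond})^2}{4\cond}$; multiplying by $c_{k-1}\geq0$ and adding $1$ produces exactly the quantity $c_k$ defined in~\eqref{eq:FR_angle}, establishing the first inequality. The ``equivalently'' clause then follows directly from the elementary bound~\eqref{eq:explicit_bound}: from $\nicefrac{\|d_k\|^2}{\|\nabla f(x_k)\|^2}\leq c_k$ one gets $|\sin\theta_k|\leq\sqrt{1-\nicefrac{1}{c_k}}=\epsilon$.

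I do not anticipate a genuine obstacle: the hard part (the Lagrangian-type weighted combination of interpolation inequalities) is entirely encapsulated in \Cref{Bound-on-beta}, and what remains is the short algebraic reduction above plus two citations. The only points needing a little care are (i) noting that $c_{k-1}>1$ is precisely the regime in which \Cref{Bound-on-beta} is stated and that it ensures $\nabla f(x_{k-1})\neq0$ so that $\beta_{k-1}$ and all ratios make sense, and (ii) optionally remarking that the boundary case $c_{k-1}=1$ (where $d_{k-1}\parallel\nabla f(x_{k-1})$) is excluded from the statement but would be handled directly, since then $\beta_{k-1}$ and the resulting ratio can be computed explicitly.
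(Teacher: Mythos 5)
Your proposal is correct and is essentially the paper's own argument: the paper's ``weighted sum of equalities'' with weights $2\beta_{k-1}$, $\beta_{k-1}^{2}$, $-c_{k-1}\beta_{k-1}$ is just a Lagrangian-style packaging of exactly your direct expansion, yielding the same identity $\|d_{k}\|^{2}=(1+c_{k-1}\beta_{k-1})\|\nabla f(x_{k})\|^{2}$, after which both proofs invoke Lemma~\ref{Bound-on-beta} and conclude via~\eqref{eq:explicit_bound}.
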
 
 \begin{proof}
The proof consists of the following weighted sum of {equalities}: 
\begin{itemize}
\item optimality condition of the line search with weight $\lambda_{1}=2\beta_{k-1}$:
\[
0=\langle\nabla f(x_{k});d_{k-1}\rangle,
\]
\item the quality of the search direction with weight $\lambda_{2}=\beta_{k-1}^{2}$:
\[
0=\|d_{k-1}\|^{2}-c_{k-1}\|\nabla f(x_{k-1})\|^{2},
\]
\item definition of $\beta_{k-1}$ with weight $\lambda_{3}=-c_{k-1}\beta_{k-1}$:
\[
\begin{aligned}0=\|\nabla f(x_{k})\|^{2}-\beta_{k-1}\|\nabla f(x_{k-1})\|^{2}.\end{aligned}
\]
\end{itemize}
The weighted sum can be simplified as (calculation shown in \appref{app:reform_lemma_2_3})

{
\begin{align*}0 = & \lambda_{1}\left[\langle\nabla f(x_{k});d_{k-1}\rangle\right]+\lambda_{2}\left[\|d_{k-1}\|^{2}-c_{k-1}\|\nabla f(x_{k-1})\|^{2}\right] +\lambda_{3}\left[\|\nabla f(x_{k})\|^{2}-\beta_{k-1}\|\nabla f(x_{k-1})\|^{2}\right]\\
=\, & \|d_{k}\|^{2}-(1+c_{k-1}\beta_{k-1})\|\nabla f(x_{k})\|^{2}.
\end{align*}
Hence,
\[
\begin{aligned}\|d_{k}\|^{2} & =(1+c_{k-1}\beta_{k-1})\|\nabla f(x_{k})\|^{2}\\
 & \leq\left(1+\frac{\left(1-\cond+2\sqrt{(c_{k-1}-1)\cond}\right)^{2}}{4\cond}\right)\|\nabla f(x_{k})\|^{2},
\end{aligned}
\]
}
where in the last line we have used the upper bound on $\beta_{k-1}$ from~\eqref{eq:case_2_beta_FR}. This gives us  \eqref{eq:FR_angle}. Finally, using \eqref{eq:explicit_bound},
we have 
$|\sin\theta_{k}|\leq\epsilon$, where $\epsilon=\sqrt{1-\nicefrac{1}{c_{k}}}$.
\end{proof}

Similar to PRP, in \appref{sec:num_vec_wcsd}, we compare this last bound with the worst example that we were able to find numerically (i.e., worst feasible points to \eqref{eq:two-point-adapep-qcqp}). Thereby, we conclude tightness of the bound on the quality of the search direction~\eqref{eq:FR_angle}. That is, we claim that for all values of $\cond$ and $c_{k-1}$, there exist $n\in\mathbb{N}$, functions $f\in\mathcal{F}_{\mu,L}$ and $x_{k-1},d_{k-1}\in\mathbb{R}^n$ such that the bound from Lemma~\ref{thm:FR_1} is achieved with equality.

That being said, this bound only allows obtaining unsatisfactory convergence results for FR, although not letting much room for improvements, as showed in the next sections.

\begin{theorem}[Worst-case bound]\label{thm:FR_2} Let $f\in\mathcal{F}_{\mu,L}$, and $x_{k},d_{k}\in\mathbb{R}^n$ and $x_{k+1}$, $d_{k+1}\in\mathbb{R}^n$ be generated by respectively $k\geq 0$ and $k+1$ iterations of the {\FR} method (i.e.,~\eqref{eq:NCG} with $\eta=0$). It holds that
    \[ f(x_{k+1})-f_\star\leq \displaystyle \left(\frac{1 - \cond \frac{1-\epsilon_k}{1+\epsilon_k}}{1+\cond\frac{1-\epsilon_k}{1+\epsilon_k}}\right)^{2} \left(f(x_{k})-f_\star\right),\]
with $\epsilon_k=\sqrt{\nicefrac{(1-\cond)^2 (k-1)^2}{4 \cond + (1-\cond)^2 (k-1)^2}}$.
\end{theorem}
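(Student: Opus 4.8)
The plan is to iterate the one-step angle bound of Corollary~\ref{thm:approx_GD_sine-1} while keeping track of how the \emph{quality} of the {\FR} search direction deteriorates along the run. Concretely, let $c_j\triangleq\nicefrac{\|d_j\|^2}{\|\nabla f(x_j)\|^2}\geq 1$ measure this quality (with $c_j=1$ exactly when $d_j=\nabla f(x_j)$, which is the case at the initialization of {\FR}). Lemma~\ref{thm:FR_1} gives the recursion $c_j\leq 1+\frac{(1-\cond+2\sqrt{(c_{j-1}-1)\cond})^2}{4\cond}$, and~\eqref{eq:explicit_bound} turns a bound on $c_j$ into the angle bound $|\sin\theta_j|\leq\sqrt{1-1/c_j}$, which is precisely the input required by Corollary~\ref{thm:approx_GD_sine-1}. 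So the two ingredients are already on the table; the work is to unroll the recursion and feed the result into the corollary.

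First I would linearize the recursion. Since $1-\cond\geq 0$ and both sides are nonnegative, taking square roots gives $\sqrt{c_j-1}\leq\sqrt{c_{j-1}-1}+\frac{1-\cond}{2\sqrt{\cond}}$, i.e.\ $\sqrt{c_j-1}$ grows by at most $\frac{1-\cond}{2\sqrt{\cond}}$ at each {\FR} update. Telescoping from the gradient initialization, where $\sqrt{c-1}=0$, shows that the direction quality after $k$ iterations obeys $c_k\leq 1+\frac{(1-\cond)^2(k-1)^2}{4\cond}$ — the $(k-1)$ (rather than $k$) reflecting that the first {\FR} direction carries no conjugate correction, so $d_k$ accumulates only $k-1$ successive $\beta$-updates. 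Next, since $t\mapsto\sqrt{1-1/t}$ is increasing on $[1,\infty)$, this yields $|\sin\theta_k|\leq\sqrt{1-1/c_k}\leq\sqrt{\nicefrac{(1-\cond)^2(k-1)^2}{4\cond+(1-\cond)^2(k-1)^2}}=\epsilon_k$.

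It then remains to invoke Corollary~\ref{thm:approx_GD_sine-1}: {\FR} performs an exact line search along $d_k$, so the corollary applies to the step $x_k\to x_{k+1}$, and with $\epsilon=\epsilon_k$ it gives directly $f(x_{k+1})-f_\star\leq\bigl(\tfrac{1-\cond(1-\epsilon_k)/(1+\epsilon_k)}{1+\cond(1-\epsilon_k)/(1+\epsilon_k)}\bigr)^2(f(x_k)-f_\star)$, which is the claimed bound. (There is no loss in overestimating $|\sin\theta_k|$ by $\epsilon_k$, since the rate in Corollary~\ref{thm:approx_GD_sine-1} is nondecreasing in $\epsilon$.)

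The step I expect to be the real obstacle is the boundary of the recursion. Lemma~\ref{thm:FR_1}, and the bound on $\beta_{k-1}$ in Lemma~\ref{Bound-on-beta} on which it rests, is stated only for $c_{j-1}>1$ (one of the certifying multipliers carries $c_{j-1}-1$ in a denominator), yet the telescoping above is seeded exactly at $c=1$. One must therefore treat the first {\FR} update separately: there $d_{j-1}=\nabla f(x_{j-1})$, exact line search forces $\langle\nabla f(x_j);\nabla f(x_{j-1})\rangle=0$, hence $c_j=1+\beta_{j-1}$, and one needs the direct estimate $\beta_{j-1}\leq\frac{(1-\cond)^2}{4\cond}$ (equivalently, that the bound of Lemma~\ref{thm:FR_1} extends continuously to $c_{j-1}=1$). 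Everything else — the square-root substitution, the telescoping, the monotonicity arguments, and keeping the iteration index straight — is elementary algebra.
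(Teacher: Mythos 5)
Your overall route is the same as the paper's: unroll the recursion of Lemma~\ref{thm:FR_1} in the form $\sqrt{c_j-1}\leq\sqrt{c_{j-1}-1}+\tfrac{1-\cond}{2\sqrt{\cond}}$ starting from $c_0=1$, convert the resulting bound on $c_k$ into an angle bound via~\eqref{eq:explicit_bound}, and feed it into Corollary~\ref{thm:approx_GD_sine-1}. The genuine gap is in the telescoping count. Your claim that ``$d_k$ accumulates only $k-1$ successive $\beta$-updates'' is incorrect: $d_0=\nabla f(x_0)$ carries no update, and each of the $k$ subsequent iterations applies one, so the recursion is applied $k$ times and the unrolling yields $c_k\leq 1+k^2\,\nicefrac{(1-\cond)^2}{4\cond}$, hence $|\sin\theta_k|\leq\sqrt{\nicefrac{(1-\cond)^2k^2}{4\cond+(1-\cond)^2k^2}}$ --- which is exactly what the paper's own proof computes. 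Your $(k-1)^2$ bound fails already at $k=1$: there $d_1=\nabla f(x_1)+\beta_0\nabla f(x_0)$ with $\langle\nabla f(x_1);\nabla f(x_0)\rangle=0$ by exact line search, so $c_1=1+\beta_0>1$ whenever $\nabla f(x_1)\neq 0$, whereas your claim would force $c_1\leq 1$. So the counting argument you use to reconcile the unrolled recursion with the $(k-1)^2$ appearing in the theorem statement does not work; the discrepancy between $k^2$ (what the recursion delivers, and what the paper's proof actually derives) and $(k-1)^2$ (what the statement displays) is an indexing inconsistency in the paper itself, not something a correct telescoping can produce. A correct write-up of your argument proves the statement with $k$ in place of $k-1$ in $\epsilon_k$.

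On the positive side, you correctly identify a point the paper glosses over: Lemma~\ref{thm:FR_1} (via Lemma~\ref{Bound-on-beta}) is stated only for $c_{j-1}>1$, while the recursion is seeded at $c_0=1$. Your patch --- treat the first update directly, where orthogonality gives $c_1=1+\beta_0$ and one needs $\beta_0\leq\nicefrac{(1-\cond)^2}{4\cond}$ (the continuous extension of the lemma to $c_{j-1}=1$) --- is the right way to close that boundary case, and note that it again confirms $c_1>1$ in general, i.e., the $k^2$ counting rather than $(k-1)^2$.
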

\begin{proof}The desired claim is a direct consequence of \Cref{thm:approx_GD_sine-1} with  \Cref{thm:FR_1}. Indeed, it follows from 
\[c_{k} \leq 1+\frac{\left(1-\frac{\mu}{L}+2 \sqrt{(c_{k-1}-1)\frac{\mu }{L}}\right)^2}{\frac{4 \mu }{L}}\]
(the guarantee from~\Cref{thm:FR_1} for the quality of the search direction) which we can rewrite as
\[ \sqrt{c_{k+1}-1}\leq \frac{1-\cond+2\sqrt{(c_k-1)\cond}}{2\sqrt{\cond}}\]
with $c_{0}-1=0$, thereby arriving to $c_{k}\leq 1 + k^2 \nicefrac{(1-\cond)^2}{4\cond}$ by recursion. For applying \Cref{thm:approx_GD}, we compute $\epsilon_k=\sqrt{1-\nicefrac{1}{c_k}}\leq\sqrt{\nicefrac{(1-\cond)^2 k^2}{4 \cond + (1-\cond)^2 k^2}}$ and reach the desired statement.
  \end{proof}
It is clear that the statement of \Cref{thm:FR_2} is rather disappointing, as the convergence rate of the FR variation can become arbitrarily close to 1. While this guarantee clearly does not give a total and fair picture of the true behavior of FR in practice, it seems in line with the practical necessity to effectively restart the method as it runs~\cite{hager2006survey}.

The next section is devoted to studying the possibilities for obtaining tighter guarantees for~PRP and~FR beyond the simple single-iteration worst-case analyses of this section (which are tight for one iteration, but not beyond), showing that we cannot hope to improve the convergence rates for those methods without further assumptions on the problems at hand.

\section{Obtaining better worst-case bounds for NCGMs} \label{sec:analysis}

In the previous section, we established closed-form bounds on ratios between consecutive function values for NCGMs by characterizing worst-case search directions. Albeit being tight for the analysis of NCGMs for one iteration, the bounds that we obtained are disappointingly inferior to those of the vanilla gradient descent. In this section, we investigate the possibility of obtaining better worst-case guarantees for NCGMs. For doing this using our framework, one natural possibility for us is to go beyond the study of a single iteration (since our results appear to be tight for this situation). Therefore, in contrast with the previous section, we now proceed only numerically and provide worst-case bounds without closed-forms. 

More precisely, we solve the corresponding PEPs in two regimes. In short, the difference between the two regimes resides in the type of bounds under consideration. 
\begin{enumerate}
    \item The first type of bounds {can be thought of as} a ``Lyapunov'' approach which studies $N$ iterations of~\eqref{eq:NCG} starting at some iterate $(x_k, d_k)$ (for which we ``neglect'' how it was generated). In this first setup, we numerically compute worst-case bounds on $\nicefrac{f(x_{k+N} ) - f_\star}{f(x_{k})-f_\star}$ for different values of $N$ (namely $N = 1,2,3,4$). As for the results of Section~\ref{s:NCG_as_GD}, we quantify the quality of the couple $(x_k,d_k)$ by requiring that $\| d_k \|^2 \leq c_k \| \nabla f(x_k)\|^2$. When $N=1$, this setup corresponds to that of Section~\ref{s:NCG_as_GD}. Stemming from the fact {that} the worst-case behaviors observed for $N=1$ might not be compatible between consecutive iterations, we expect the quality of the bounds to improve with $N$. Of course, the main weakness of this approach is the fact that we neglect how $(x_k,d_k)$ was generated.
    
    \item As a natural complementary alternative, the second type of bounds studies $N$ iterations of~\eqref{eq:NCG} initiated at $x_0$ (with $d_0=\nabla f(x_0)$). Whereas the first type of bounds is by construction more conservative, it has the advantage of being \emph{recursive}: it is valid for all $k\geq 0$. On the other side, the second type of bounds is only valid for the first $N$ iterations (the bound cannot be used recursively), but it cannot be improved at all. That is, we study \emph{exact} worst-case ratio $\nicefrac{f(x_{N} ) - f_\star}{f(x_{0})-f_\star}$ for a few different values of $N$ (namely $N\in \{1,2,3,4\}$). In this setup, we obtain worst-case bounds that are only valid close to initialization. However, it has the advantage of being unimprovable, as we do not neglect how the search direction is generated.
\end{enumerate}

\paragraph{Section organization.}
This section is organized as follows. First, in Section~\ref{subsec:NCG-PEP-ift-formulation}
we present the performance estimation problems for~\eqref{eq:NCG}
specifically for computing the worst-case ratios $\nicefrac{f(x_{k+N})-f_{\star}}{f(x_{k})-f_{\star}}$
and $\nicefrac{f(x_{N})-f_{\star}}{f(x_{0})-f_{\star}}$. In Section
\ref{sec:ncqcqp_for_Lyapunov_and_exact}, we describe the steps to arrive at the nonconvex
QCQP formulations for the performance estimation problems considered.
Then, Section~\ref{sec:num-res-prp} and Section~\ref{subsec:Numerical-results-for-FR}
presents our findings for respectively~{\PRP} and~{\FR}. In
Appendix \ref{subsec:counter-example}, we discuss how to generate
the counter-examples from the solutions to the nonconvex QCQPs.

\subsection{Computing numerical worst-case scenarios}
\label{subsec:NCG-PEP-ift-formulation}

Similar to~\eqref{eq:angle_PEP}, the problem of computing the worst-case ratio $\nicefrac{f(x_{k+N} ) - f_\star}{f(x_{k})-f_\star}$ is framed as the following nonconvex maximization problem (for $c\geq 1$ and $\cond\triangleq\nicefrac{\mu}{L}$):
\begin{equation}
\rho_N(\cond,c)\triangleq\left(\begin{array}{ll}
\underset{\substack{f, \, \{x_{k+i}\}_i, \, \{d_{k+i}\}_{i},\\
\{\gamma_{k+i}\}_{i}, \, \{\beta_{k+i}\}_{i}, \, n
}
}{\mbox{maximize}} & \frac{f(x_{k+N})-f_{\star}}{f(x_{k})-f_{\star}}\\
\textrm{subject to} & n\in\mathbb{N},\, {f\in\mathcal{F}_{\mu, L}(\mathbb{R}^{n})} ,\,d_{k},x_{k}\in\mathbb{R}^{n},\\
 & \left\langle \nabla f(x_{k});\,d_{k}\right\rangle =\|\nabla f(x_{k})\|^{2},\\
 &  \|d_{k}\|^{2}\leq c\|\nabla f(x_{k})\|^{2},\\
 & \begin{pmatrix}x_{k+1}\\d_{k+1}\\\beta_{k}\end{pmatrix},\ldots,\begin{pmatrix}x_{k+N}\\d_{k+N}\\\beta_{k+N-1}\end{pmatrix}\text{ generated by~\eqref{eq:NCG} from }x_{k}\text{ and }d_{k}.
\end{array}\right)\tag{$\mathcal{B}_\textup{Lyapunov}$}\label{eq:PEP_rho_N}
\end{equation}
We proceed similarly for $\nicefrac{f(x_{N} ) - f_\star}{f(x_{0})-f_\star}$:
\begin{equation}
\rho_{N,0}(\cond)\triangleq\left(\begin{array}{ll}
\underset{\substack{f, \, \{x_{k+i}\}_i, \, \{d_{k+i}\}_{i},\\
\{\gamma_{k+i}\}_{i}, \, \{\beta_{k+i}\}_{i}, \, n
}
}{\mbox{maximize}} & \frac{f(x_{N})-f_{\star}}{f(x_{0})-f_{\star}}\\
\textrm{subject to} & n\in\mathbb{N},\,f\in {\mathcal{F}_{\mu, L}(\mathbb{R}^{n})},\,x_{0}\in\mathbb{R}^{n},\\
 & d_0=\nabla f(x_0),\\
 & \begin{pmatrix}x_{1}\\d_{1}\\\beta_{0}\end{pmatrix},\ldots,\begin{pmatrix}x_{N}\\d_{N}\\\beta_{N-1}\end{pmatrix}\text{ generated by~\eqref{eq:NCG} from }x_{k}\text{ and }d_{k}.
\end{array}\right)\tag{$\mathcal{B}_\textup{exact}$}\label{eq:PEP_rho_N0}
\end{equation}

{

Obviously, $\rho_{N}(\cond,c)\geq\rho_{N,0}(\cond)$ for any $c\geq1$.
We solve the nonconvex QCQP reformulations of~\eqref{eq:PEP_rho_N} and~\eqref{eq:PEP_rho_N0} numerically
to high precision (reformulation details shown in Section \ref{sec:ncqcqp_for_Lyapunov_and_exact})
for $N\in\{1,2,3,4\}$ and report the corresponding results in what
follows. In the numerical experiments, we fix the values of $c$ using
Lemma~\ref{thm:PRP_1} for {\PRP} in~\eqref{eq:PEP_rho_N}, thereby
computing $\rho_{N}\left(\cond,\nicefrac{(1+\cond)^{2}}{4\cond}\right)$
whose results are provided in Figure~\ref{fig:PRP_Lyapunov} of Section \ref{sec:num-res-prp}. For
{\FR}, $c$ can become arbitrarily bad and we therefore only compute
$\rho_{N,0}(\cond)$ via~\eqref{eq:PEP_rho_N0}. The numerical values
for $\rho_{N,0}(\cond)$ respectively {\PRP} and {\FR} are provided
in Figure~\ref{fig:PRP_exact} and Figure~\ref{fig:FR_exact}, located in Section \ref{sec:num-res-prp} and Section \ref{subsec:Numerical-results-for-FR}, respectively. 

 In the next section, we describe the nonconvex
QCQP formulations for ~\eqref{eq:PEP_rho_N} and~\eqref{eq:PEP_rho_N0}.
Readers interested in the findings of our numerical experiments by solving the nonconvex QCQPs can skip to Section~\ref{sec:num-res-prp} (for ~{\PRP}) and Section~\ref{subsec:Numerical-results-for-FR} (for~{\FR}).

\subsection{Nonconvex QCQP reformulations of ~\eqref{eq:PEP_rho_N} and~\eqref{eq:PEP_rho_N0}\label{sec:ncqcqp_for_Lyapunov_and_exact}}

Similar to the reformulations from \eqref{eq:two-point-adapep-qcqp},~
\eqref{eq:PEP_rho_N} and~\eqref{eq:PEP_rho_N0} can be cast as nonconvex
QCQPs, where the number of nonconvex constraints grows quadratically
with $N$. Thereby, solving them to global optimality in reasonable
time for $N=3,4$ is already challenging.

Therefore, rather than solving the nonconvex QCQP reformulations of
\eqref{eq:PEP_rho_N} and~\eqref{eq:PEP_rho_N0} directly, we compute
upper bounds and lower bounds by solving more tractable nonconvex
QCQP formulations. We then show that the relative gap between the
upper and lower bounds is less than $10\%$ which thereby indicates
that there is essentially no room for further improvement.

\subsubsection{Nonconvex QCQP reformulation of \eqref{eq:PEP_rho_N}\label{subsec:Bounds-for-Lyapunov-ratios}}

This section presents the nonconvex QCQP formulations for our upper bound $\overline{\rho}_{N}(\cond,c)$
and lower bound $\underline{\rho}_{N}(\cond,c)$ on $\rho_{N}(\cond,c)$. We use the notation $[a:b] = \{a, a+1, a+2, \ldots, b-1, b\}$ where $a,b$ are integers.

\paragraph{Computing $\overline{\rho}_{N}(\cond,c)$\label{sec:comp-upper-bd}}

Using \eqref{eq:relaxed-line-search-cond}, we have the following
relaxation of~\eqref{eq:PEP_rho_N}, which provides upper bounds
on ${\rho}_{N}(\cond,c)$:

\begin{equation}
\left(\begin{array}{ll}
\underset{\substack{\{x_{k+i}\}_{i\in[0:N]}, \\ \{d_{k+i}\}_{i\in[0:N]}, \\ f, \, n  }}{\mbox{maximize}} & \frac{f(x_{k+N})-f_{\star}}{f(x_{k})-f_{\star}}\\
\textrm{subject to} & n\in\mathbb{N},\,f\in\mathcal{F}_{\mu,L}(\mathbb{R}^{n}),\\
 & x_{k+i},\,d_{k+i}\in\mathbb{R}^{n},\quad i\in[0:N]\\
 & \|\pg_{k}\|^{2}\leq c\|\nabla f(x_{k})\|^{2},\\
 & \left\langle \nabla f(x_{k+i+1});\,\pg_{k+i}\right\rangle =0,\quad i\in[0:N-1],\\
 & \left\langle \nabla f(x_{k+i+1});\,x_{k+i}-x_{k+i+1}\right\rangle =0,\quad i\in[0:N-1],\textrm{ }\\
 & \left\langle \nabla f(x_{k+i});\,\pg_{k+i}\right\rangle =\|\nabla f(x_{k+i})\|^{2},\quad i\in[0:N-1],\\
 & \pg_{k+i+1}=g_{k+i+1}+\beta_{k+i}\pg_{k+i},\quad i\in[0:N-2],\\
 & \beta_{k+i}=\frac{\|g_{k+i+1}\|^{2}-\eta\left\langle g_{k+i+1};\,g_{k+i}\right\rangle }{\|g_{k+i}\|^{2}},\quad i\in[0:N-2].
\end{array}\right)\label{eq:N-point-ncg-pep-relaxed}
\end{equation}
Using the notation $g_{i}\triangleq\nabla f(x_{i})$ and $f_{i}\triangleq f(x_{i})$
again, and then applying an homogeneity argument, we write \eqref{eq:N-point-ncg-pep-relaxed}
as: 
\begin{equation}
\overline{\rho}_{N}(\cond,c)=\left(\begin{array}{ll}
\underset{\substack{\{x_{k+i}\}_{i\in[0:N]}, \\ \{\pg_{k+i}\}_{i\in[0:N]}, \\ f, \, n} }{\mbox{maximize}} & f_{k+N}-f_{\star}\\
\textrm{subject to} & n\in\mathbb{N},\,f\in\mathcal{F}_{\mu,L}(\mathbb{R}^{n}),\\
 & x_{k+i},\,d_{k+i}\in\mathbb{R}^{n},\quad i\in[0:N]\\
 & \|\pg_{k}\|^{2}\leq c\|g_{k}\|^{2},\\
 & \left\langle g_{k+i+1};\,\pg_{k+i}\right\rangle =0,\quad i\in[0:N-1],\\
 & \left\langle g_{k+i+1};\,x_{k+i}-x_{k+i+1}\right\rangle =0,\quad i\in[0:N-1],\textrm{ }\\
 & \left\langle g_{k+i} ;\, \pg_{k+i}\right\rangle =\|g_{k+i}\|^{2},\quad i\in[0:N-1],\\
 & \pg_{k+i+1}=g_{k+i+1}+\beta_{k+i}\pg_{k+i},\quad i\in[0:N-2],\\
 & \beta_{k+i-1}=\frac{\|g_{k+i}\|^{2}-\eta\left\langle g_{k+i};\,g_{k+i-1}\right\rangle }{\|g_{k+i-1}\|^{2}},\quad i\in[1:N-1],\\
 & f_{k}-f_{\star}=1.
\end{array}\right)\label{eq:overline_rho_0_shifted_inft_dm}
\end{equation}
Define $I_{N}^{\star}=\{\star,k,k+1,\ldots,k+N\}$.
Next, note that the equation $\pg_{k+i+1}=g_{k+i+1}+\beta_{k+i}\pg_{k+i}$
for $i\in[0:N-2],$ can be written equivalently as the following set
of equations:

\begin{equation}
\begin{aligned} & \chi_{j,i}=\chi_{j,i-1}\beta_{k+i-1},\quad i\in[1:N-1],j\in[0:i-2],\\
 & \chi_{i-1,i}=\beta_{k+i-1},\quad i\in[1:N-1],\\
 & \pg_{k+i}=g_{k+i}+\sum_{j=1}^{i-1}\chi_{j,i}g_{k+j}+\chi_{0,i}\pg_{k},\quad i\in[1:N-1],
\end{aligned}
\label{eq:eqs-for-pseudograds}
\end{equation}
where we have introduced the intermediate variables $\chi_{j,i}$,
which will aid us in formulating \eqref{eq:overline_rho_0_shifted_inft_dm}
as a nonconvex QCQP down the line. In absence of these intermediate
variables in \eqref{eq:eqs-for-pseudograds}, the resultant constraints
in the final optimization problem will involve polynomials of degree
three or more in the decision variables, and such optimization problems
present a significantly greater challenge in solving to global optimality
compared to a QCQP. Next, using \eqref{eq:eqs-for-pseudograds} and
Theorem \ref{thm:interp}, we can equivalently write \eqref{eq:overline_rho_0_shifted_inft_dm}
as: 
\begin{equation}
\overline{\rho}_{N}(\cond,c)=\left(\begin{array}{ll}
\underset{\substack{\{x_{k+i},g_{k+i},f_{k+i}\}_{i}, \, n, \\ \{\pg_{k+i}\}_{i}, \, \{\beta_{k+i}\}_{i},\{\chi_{j,i}\}_{j,i}}}{\mbox{maximize}} & f_{k+N}-f_{\star}\\
\textrm{subject to} & n\in\mathbb{N},\\
 & f_{i}\geq f_{j}+\langle g_{j};\,x_{i}-x_{j}\rangle+\frac{1}{2(1-\frac{\mu}{L})}\Big(\frac{1}{L}\|g_{i}-g_{j}\|^{2}\\
 & \quad+\mu\|x_{i}-x_{j}\|^{2}-2\frac{\mu}{L}\left\langle g_{i}-g_{j};\,x_{i}-x_{j}\right\rangle \Big),\quad i,j\in I_{N}^{\star},\\
 & \|\pg_{k}\|^{2}\leq c\|g_{k}\|^{2},\\
 & \left\langle g_{k+i+1} ;\, \pg_{k+i}\right\rangle =0,\quad i\in[0:N-1],\\
 & \left\langle g_{k+i+1} ;\, x_{k+i}-x_{k+i+1}\right\rangle =0,\quad i\in[0:N-1],\textrm{ }\\
 & \left\langle g_{k+i} ;\, \pg_{k+i}\right\rangle =\|g_{k+i}\|^{2},\quad i\in[0:N-1],\\
 & \beta_{k+i-1}=\frac{\|g_{k+i}\|^{2}-\eta\left\langle g_{k+i} ;\, g_{k+i-1}\right\rangle }{\|g_{k+i-1}\|^{2}},\quad i\in[1:N-1],\\
 & \chi_{j,i}=\chi_{j,i-1}\beta_{k+i-1},\quad i\in[1:N-1],j\in[0:i-2],\\
 & \chi_{i-1,i}=\beta_{k+i-1},\quad i\in[1:N-1],\\
 & \pg_{k+i}=g_{k+i}+\sum_{j=1}^{i-1}\chi_{j,i}g_{k+j}+\chi_{0,i}\pg_{k},\quad i\in[1:N-1],\\
 & f_{k}-f_{\star}=1,\\
 & g_{\star}=0,\,x_{\star}=0,\,f_{\star}=0,\\
 & \{x_{i},g_{i},f_{i}\}_{i\in I_{N}^{\star}}\subset\mathbb{R}^{n}\times\mathbb{R}^{n}\times\mathbb{R},\;\{\pg_{i}\}_{i\in [k+1:k+N-1]}\subset\mathbb{R}^{n},\\
 & \{\beta_{k+i}\}_{i\in[0:N-2]}\subset\mathbb{R},\;\{\chi_{j,i}\}_{j\in[0:N-2],i\in[0:N-1]}\subset\mathbb{R}.
\end{array}\right)\label{eq:overline_rho_0_shifted_fnt_dm_intract}
\end{equation}
Note that we have set $g_{\star}=0$,
$x_{\star}=0$, and $f_{\star}=0$ without loss of generality, because
both the objective and the function class are closed and invariant
under shifting variables and function values. We introduce Grammian
matrices again: 
\begin{equation}
\begin{alignedat}{1}H & =[\pg_{k}\mid g_{k}\mid g_{k+1}\mid g_{k+2}\mid\cdots\mid g_{k+N}\mid x_{k}\mid x_{k+1}\mid x_{k+2}\mid\cdots\mid x_{k+N}]\in\mathbb{R}^{n\times(2N+3)},\\
G & =H^{\top}H\in\mathbb{S}_{+}^{(2N+3)},\;\rank G\leq n,\\
F & =[f_{k}\mid f_{k+1}\mid\ldots\mid f_{k+N}]\in\mathbb{R}^{1\times(N+1)}.
\end{alignedat}
\label{eq:grammian-mats-ncg-Lyapunov-1}
\end{equation}

Using the same arguments described in Section \ref{sec:worst-case-search-dir}, we can ignore the constraint $\rank G\leq n$,
and confine $H$ to be in $\mathbb{R}^{(2N+3)\times(2N+3)}$ without
loss of generality.
Next, define the following notation for selecting columns and elements
of $H$ and $F$: 
\begin{equation}
\begin{alignedat}{1} & \mathbf{x}_{\star}=\mathbf{0}\in\mathbb{R}^{2N+3},\,\pgb_{k}=e_{1}\in\mathbb{R}^{2N+3},\,\mathbf{g}_{k+i}=e_{i+2}\in\mathbb{R}^{2N+3},\\
 & \mathbf{x}_{k+i}=e_{(N+2)+(i+1)}\in\mathbb{R}^{2N+3},\\
 & \mathbf{f}_{\star}=\mathbf{0},\;\mathbf{f}_{k+i}=e_{i+1}\in\mathbb{R}^{(N+1)},\\
 & \pgb_{k+i}=\mathbf{g}_{k+i}+\sum_{j=1}^{i-1}\chi_{j,i}\mathbf{g}_{k+j}+\chi_{0,i}\pgb_{k}\in\mathbb{R}^{2N+3},
\end{alignedat}
\label{eq:rho_overline_bold_vectors}
\end{equation}
where $i\in[0:N]$. This ensures that we have $x_{i}=H\mathbf{x}_{i}$,
$g_{i}=H\mathbf{g}_{i}$, $\pg_{i}=H\pgb_{i}$, $f_{i}=F\mathbf{f}_{i}$
for all $i\in I_{N}^{\star}$. For appropriate choices of matrices
$A_{i,j}$,$B_{i,j}$, $C_{i,j}$, $\widetilde{C}_{i,j}$, $D_{i,j}$,
$\widetilde{D}_{i,j}$, $E_{i,j}$, and vector $a_{i,j}$ as defined
in \eqref{eq:effect-of-grammian-trs-ncg-Lyapunov-improved}, where
$\mathbf{x}_{i}$, $\mathbf{g}_{i}$, $\mathbf{f}_{i}$, $\pgb_{i}$
are taken from \eqref{eq:rho_overline_bold_vectors} now, we can ensure
that the identities in \eqref{eq:ABCa-mat-vec-ncg-Lyapunov-improved}
hold for all $i,j\in I_{N}^{\star}$. Using those identities and using
the definition of $G=H^{\top}H$, where $H\in\mathbb{R}^{(2N+3)\times(2N+3)}$,
we can write \eqref{eq:overline_rho_0_shifted_fnt_dm_intract} as
the following nonconvex QCQP: 
\begin{equation}
\overline{\rho}_{N}(\cond,c)=\left(\begin{array}{ll}
\underset{\substack{F, \, G, \, H, \\ \{\chi_{j,i}\}_{j,i}, \, \{\beta_{k+i}\}_{i}}}{\mbox{maximize}} & Fa_{\star,k+N}\\
\textrm{subject to} & Fa_{i,j}+\tr G\left[A_{i,j}+\frac{1}{2(1-\frac{\mu}{L})}\left(\frac{1}{L}C_{i,j}+\mu B_{i,j}-2\frac{\mu}{L}E{}_{i,j}\right)\right]\leq0,\quad i,j\in I_{N}^{\star},\\
 & \tr G\widetilde{C}_{k,\star}\leq c\tr GC_{k,\star},\\
 & \tr G\widetilde{D}_{k+i+1,k+i}=0,\quad i\in[0:N-1],\\
 & \tr GA_{k+i,k+i+1}=0,\quad i\in[0:N-1],\\
 & \tr G\widetilde{D}_{k+i,k+i}=\tr GC_{k+i,\star}\quad i\in[0:N-1],\\
 & \beta_{k+i-1}\times\tr GC_{k+i-1,\star}=\tr G\left(C_{k+i,\star}-\eta D_{k+i,k+i-1}\right),\quad i\in[1:N-1],\\
 & \chi_{j,i}=\chi_{j,i-1}\beta_{k+i-1},\quad i\in[1:N-1],\,j\in[0:i-2],\\
 & \chi_{i-1,i}=\beta_{k+i-1},\quad i\in[1:N-1],\\
 & Fa_{\star,k}=1,\\
 & G=H^{\top}H,\\
 & F\in\mathbb{R}^{N+1},\;G\in\mathbb{S}^{2N+3},\;H\in\mathbb{R}^{(2N+3)\times(2N+3)},\\
 & \{\beta_{k+i}\}_{i\in[0:N-2]}\subset\mathbb{R},\;\{\chi_{j,i}\}_{j\in[0:N-2],i\in[0:N-1]}\subset\mathbb{R}.
\end{array}\right)\label{eq:NCG-PEP-upper-bound-final}
\end{equation}

\paragraph{Computing $\underline{\rho}_{N}(\cond,c)$ and corresponding counter-examples
\label{sec:comp-lower-bd}}

We now discuss how we can calculate $\underline{\rho}_{N}(\cond,c)$
and construct the corresponding ``bad'' function. This function
serves as a counter-example, illustrating scenarios where~\eqref{eq:NCG}
performs poorly. Once we have solved \eqref{eq:NCG-PEP-upper-bound-final},
it provides us with the corresponding CG update parameters, which
we denote by $\overline{\beta}_{i}$. If we can solve \eqref{eq:PEP_rho_N}
with the CG update parameters fixed to the $\overline{\beta}_{i}$
found from \eqref{eq:NCG-PEP-upper-bound-final}, then it will provide
us with the lower bound $\underline{\rho}_{N}(\mu,L,c)$. This process
also yields a ``bad'' function that acts as a counter-example, which
we explain next. Using the notation $g_{i}\triangleq\nabla f(x_{i})$
and $f_{i}\triangleq f(x_{i})$, then applying the homogeneity argument,
we can compute $\underline{\rho}_{N}(\cond,c)$ by finding a feasible
solution to the following optimization problem: 
\begin{equation}
\left(\begin{array}{ll}
\underset{\substack{\{x_{k+i}\}_i, \, \{d_{k+i}\}_{i}, \\ \{\gamma_{k+i}\}_{i}, \, f, \, n   } }{\mbox{maximize}} & f_{k+N}-f_{\star}\\
\textrm{subject to} & n\in\mathbb{N},\,f\in\mathcal{F}_{\mu,L}(\mathbb{R}^{n}),\\
 & x_{k+i},\,d_{k+i}\in\mathbb{R}^{n},\quad i\in[0:N]\\
 & \|\pg_{k}\|^{2}\leq c\|g_{k}\|^{2},\\
 & \gamma_{k+i}=\textrm{argmin}_{\gamma}f(x_{k+i}-\gamma\pg_{k+i}),\quad i\in[0:N-1],\\
 & x_{k+i+1}=x_{k+i}-\gamma_{k+i}\pg_{k+i},\quad i\in[0:N-1],\\
 & \pg_{k+i+1}=g_{k+i+1}+\overline{\beta}_{k+i}\pg_{k+i},\quad i\in[0:N-2],\\
 & \overline{\beta}_{k+i-1}=\frac{\|g_{k+i}\|^{2}-\eta\left\langle g_{k+i};\,g_{k+i-1}\right\rangle }{\|g_{k+i-1}\|^{2}},\quad i\in[1:N-1],\\
 & f_{k}-f_{\star}=1,
\end{array}\right)\label{eq:N-point-ncg-pep-lower-bound}
\end{equation}
Next, note that the NCGM iteration scheme
in \eqref{eq:N-point-ncg-pep-lower-bound} can be equivalently written
as:

\begin{equation}
\begin{aligned} & \chi_{j,i}=\chi_{j,i-1}\overline{\beta}_{k+i-1},\quad i\in[1:N-1],j\in[0:i-2]\\
 & \chi_{i-1,i}=\overline{\beta}_{k+i-1},\quad i\in[1:N-1]\\
 & \alpha_{i,i-1}=\gamma_{k+i-1},\quad i\in[1:N],\\
 & \alpha_{i,j}=\gamma_{k+j}+\sum_{\ell=j+1}^{i-1}\gamma_{k+\ell}\chi_{j,\ell},\quad i\in[1:N],\;j\in[0:i-2],\\
 & x_{k+i}=x_{k}-\sum_{j=1}^{i-1}\alpha_{i,j}g_{k+j}-\alpha_{i,0}\pg_{k},\quad i\in[1:N],\\
 & \pg_{k+i}=g_{k+i}+\sum_{j=1}^{i-1}\chi_{j,i}g_{k+j}+\chi_{0,i}\pg_{k},\quad i\in[1:N-1].
\end{aligned}
\label{eq:eqs-for-ncg-pep-full}
\end{equation}
where we have introduced intermediate variables $\chi_{j,i}$ and
$\alpha_{i,j}$ which will aid us in formulating \eqref{eq:N-point-ncg-pep-lower-bound}
as a nonconvex QCQP. Define $I_{N}^{\star}=\{\star,k,k+1,\ldots,k+N\}$.
Now using \eqref{eq:eqs-for-ncg-pep-full}, Theorem \ref{thm:interp},
and \eqref{eq:relaxed-line-search-cond}, we can equivalently write
\eqref{eq:overline_rho_0_shifted_inft_dm} as: 
\begin{equation}
\left(\begin{array}{ll}
\underset{\substack{\{x_{k+i},g_{k+i},f_{k+i}\}_{i}, \, n, \\ \{\gamma_{k+i}\}_{i}, \, \{\chi_{j,i}\}_{j,i}, \, \{\alpha_{i,j}\}_{i,j}}}{\mbox{maximize}} & f_{k+N}-f_{\star}\\
\textrm{subject to} & n\in\mathbb{N},\\
 & f_{i}\geq f_{j}+\langle g_{j} ;\, x_{i}-x_{j}\rangle+\frac{1}{2(1-\frac{\mu}{L})}\Big(\frac{1}{L}\|g_{i}-g_{j}\|^{2}\\
 & \;+\mu\|x_{i}-x_{j}\|^{2}-2\frac{\mu}{L}\left\langle g_{i}-g_{j};\,x_{i}-x_{j}\right\rangle \Big),\quad i,j\in I_{N}^{\star},\\
 & \|\pg_{k}\|^{2}\leq c\|g_{k}\|^{2},\\
 & \left\langle g_{k+i+1} ;\, \pg_{k+i}\right\rangle =0,\quad i\in[0:N-1],\\
 & \left\langle g_{k+i+1} ;\, x_{k+i}-x_{k+i+1}\right\rangle =0,\quad i\in[0:N-1],\textrm{ }\\
 & \left\langle g_{k+i} ;\, \pg_{k+i}\right\rangle =\|g_{k+i}\|^{2},\quad i\in[0:N-1],\\
 & \chi_{j,i}=\chi_{j,i-1}\overline{\beta}_{k+i-1},\quad i\in[1:N-1],j\in[0:i-2]\\
 & \chi_{i-1,i}=\overline{\beta}_{k+i-1},\quad i\in[1:N-1]\\
 & \alpha_{i,i-1}=\gamma_{k+i-1},\quad i\in[1:N],\\
 & \alpha_{i,j}=\gamma_{k+j}+\sum_{\ell=j+1}^{i-1}\gamma_{k+\ell}\chi_{j,\ell},\quad i\in[1:N],\;j\in[0:i-2],\\
 & x_{k+i}=x_{k}-\sum_{j=1}^{i-1}\alpha_{i,j}g_{k+j}-\alpha_{i,0}\pg_{k},\quad i\in[1:N],\\
 & \pg_{k+i}=g_{k+i}+\sum_{j=1}^{i-1}\chi_{j,i}g_{k+j}+\chi_{0,i}\pg_{k},\quad i\in[1:N-1].\\
 & \overline{\beta}_{k+i-1}=\frac{\|g_{k+i}\|^{2}-\eta\left\langle g_{k+i};\,g_{k+i-1}\right\rangle }{\|g_{k+i-1}\|^{2}},\quad i\in[1:N-1],\\
 & f_{k}-f_{\star}=1,\\
 & g_{\star}=0,\,x_{\star}=0,\,f_{\star}=0,\\
 & \{x_{i},g_{i},f_{i}\}_{i\in I_{N}^{\star}}\subset\mathbb{R}^{n}\times\mathbb{R}^{n}\times\mathbb{R},\;{\{d_{i}\}_{i\in[k+1:k+N-1]}\subset\mathbb{R}^{n}},\\
 & \{\chi_{j,i}\}_{j\in[0:N-2],i\in[0:N-1]}\subset\mathbb{R},\\
 & \{\gamma_{k+i}\}_{i\in[0:N]}\subset\mathbb{R},\;\{\alpha_{i,j}\}_{i\in[1:N],j\in[0:N-1]}\subset\mathbb{R}.
\end{array}\right)\label{eq:N-point-ncg-pep-lower-bound-fnt-dmnsnl-intractable}
\end{equation}
We introduce the Grammian transformation:

\begin{equation}
\begin{alignedat}{1}H & =[x_{k}\mid g_{k}\mid g_{k+1}\mid\ldots\mid g_{k+N}\mid\pg_{k}]\in\mathbb{R}^{n\times(N+3)},\\
G & =H^{\top}H\in\mathbb{S}_{+}^{N+3},\rank G\leq n,\\
F & =[f_{k}\mid f_{k+1}\mid\ldots\mid f_{k+N}]\in\mathbb{R}^{1\times(N+1)}.
\end{alignedat}
\label{eq:grammian-mats-ncg-1}
\end{equation}

Using the same arguments described in Section \ref{sec:worst-case-search-dir}, we again ignore the constraint $\rank G\leq n$
and can let $H\in\mathbb{R}^{(N+3)\times(N+3)}$ without loss of generality.
We next define the following notation for selecting columns and elements
of $H$ and $F$: 
\begin{equation}
\begin{alignedat}{1} & \mathbf{g}_{\star}=0\in\mathbb{R}^{N+3},\;\mathbf{g}_{k+i}=e_{i+2}\in\mathbb{R}^{N+3},\quad i\in[0:N],\\
 & \pgb_{k}=e_{N+3}\in\mathbb{R}^{N+3},\\
 & \mathbf{x}_{k}=e_{1}\in\mathbb{R}^{N+2},\;\mathbf{x}_{\star}=0\in\mathbb{R}^{N+2},\\
 & \mathbf{x}_{k+i}(\alpha)=\mathbf{x}_{k}-\sum_{j=1}^{i-1}\alpha_{i,j}\mathbf{g}_{k+j}-\alpha_{i,0}\pgb_{k}\in\mathbb{R}^{N+3},\quad i\in[1:N],\\
 & \pgb_{k+i}(\chi)=\mathbf{g}_{k+i}+\sum_{j=1}^{i-1}\chi_{j,i}\mathbf{g}_{k+j}+\chi_{0,i}\pgb_{k},\quad i\in[1:N-1],\\
 & \mathbf{f}_{\star}=0\in\mathbb{R}^{N+1},\;\mathbf{f}_{k+i}=e_{i+1}\in\mathbb{R}^{N+1},\quad i\in[0:N],
\end{alignedat}
\label{eq:bold-vec-ncg-1}
\end{equation}
which ensure $x_{i}=H\mathbf{x}_{i},\;g_{i}=H\mathbf{g}_{i},\;f_{i}=F\mathbf{f}_{i},\;\pg_{i}=H\pgb_{i}$
for $i\in I_{N}^{\star}$. For appropriate choices of matrices $A_{i,j}$,$B_{i,j}$,
$C_{i,j}$, $\widetilde{C}_{i,j}$, $D_{i,j}$, $\widetilde{D}_{i,j}$,
$E_{i,j}$, and vector $a_{i,j}$ as defined in \eqref{eq:effect-of-grammian-trs-ncg-Lyapunov-improved},
where $\mathbf{x}_{i}$, $\mathbf{g}_{i}$, $\mathbf{f}_{i}$, $\pgb_{i}$
are from \eqref{eq:bold-vec-ncg-1}, we can ensure that the identities
in \eqref{eq:ABCa-mat-vec-ncg-Lyapunov-improved} hold for all $i,j\in I_{N}^{\star}$.
Using those identities and using the definition of $G=H^{\top}H$,
where $H\in\mathbb{R}^{(N+3)\times(N+3)}$, we can write \eqref{eq:N-point-ncg-pep-lower-bound-fnt-dmnsnl-intractable}
as the following nonconvex QCQP: 
\begin{equation}
\left(\begin{array}{ll}
\underset{\substack{G, \, F, \, H, \\ \{\Theta_{i,j}\}_{i,j\in I_{N}^{\star}}, \, \gamma, \, \alpha, \, \chi}}{\mbox{maximize}} & Fa_{\star,N}\\
\textrm{subject to} & Fa_{i,j}+\tr G\left[A_{i,j}+\frac{1}{2(1-\frac{\mu}{L})}\left(\frac{1}{L}C_{i,j}+\mu\Theta_{i,j}-2\frac{\mu}{L}E_{i,j}\right)\right]\leq0,\quad i,j\in I_{N}^{\star},\\
 & \Theta_{i,j}=B_{i,j},\quad i,j\in I_{N}^{\star},\\
 & \tr G\widetilde{C}_{k,\star}\leq c\tr GC_{k,\star},\\
 & \tr G\widetilde{D}_{k+i+1,k+i}=0,\quad i\in[0:N-1],\\
 & \tr GA_{k+i,k+i+1}=0,\quad i\in[0:N-1],\\
 & \tr G\widetilde{D}_{k+i,k+i}=\tr GC_{k+i,\star}\quad i\in[0:N-1],\\
 & \chi_{j,i}=\chi_{j,i-1}\overline{\beta}_{k+i-1},\quad i\in[1:N-1],j\in[0:i-2]\\
 & \chi_{i-1,i}=\overline{\beta}_{k+i-1},\quad i\in[1:N-1]\\
 & \alpha_{i,i-1}=\gamma_{k+i-1},\quad i\in[1:N],\\
 & \alpha_{i,j}=\gamma_{k+j}+\sum_{\ell=j+1}^{i-1}\gamma_{k+\ell}\chi_{j,\ell},\quad i\in[1:N],\;j\in[0:i-2],\\
 & \overline{\beta}_{k+i-1}\times\tr GC_{k+i-1,\star}=\tr G\left(C_{k+i,\star}-\eta D_{k+i,k+i-1}\right),\quad i\in[1:N-1],\\
 & Fa_{\star,k}=1,\\
 & G=H^{\top}H,\\
 & F\in\mathbb{R}^{N+1},\;G\in\mathbb{S}^{N+3},\;H\in\mathbb{R}^{(N+3)\times(N+3)},\\
 & \{\chi_{j,i}\}_{j\in[0:N-2],i\in[0:N-1]}\subset\mathbb{R},\\
 & \{\gamma_{k+i}\}_{i\in[0:N]}\subset\mathbb{R},\;\{\alpha_{i,j}\}_{i\in[1:N],j\in[0:N-1]}\subset\mathbb{R}.
\end{array}\right)\label{eq:NCG-PEP-lower-bound}
\end{equation}
Note that $\{\Theta_{i,j}\}_{i,j\in I_{N}^{\star}}$
is introduced as a separate decision variable to formulate the cubic
constraints arising from $B_{i,j}$ as quadratic constraints. Also,
to compute $\underline{\rho}_{N}(\cond,c)$, it suffices to find just
a feasible solution to \eqref{eq:NCG-PEP-lower-bound}, in Section
\ref{app:ncg-pep-alg} we will discuss how to do so using our custom
spatial branch-and-bound algorithm.

\subsubsection{Nonconvex QCQP reformulation of \eqref{eq:PEP_rho_N0}}

\label{sec:reform_rho_N0}

Now we discuss how we compute the upper bound $\overline{\rho}_{N,0}(\cond)$
and lower bound $\underline{\rho}_{N,0}(\cond)$ to $\rho_{N,0}(\cond)$
defined in \eqref{eq:PEP_rho_N0}. The bound computation process is
very similar to that of \eqref{eq:PEP_rho_N}. Observe that, in \eqref{eq:PEP_rho_N},
if we remove the constraint $\|d_{k}\|^{2}\leq c\|\nabla f(x_{k})\|^{2},$
set $k\triangleq0$ , and then add the constraint $d_{0}=\nabla f(x_{0})$,
then it is identical to \eqref{eq:PEP_rho_N0} (the constraint $\left\langle \nabla f(x_{0});\,d_{0}\right\rangle =\|\nabla f(x_{0})\|^{2}$
in \eqref{eq:PEP_rho_N} is a valid but redundant constraint for \eqref{eq:PEP_rho_N0}).

So, to compute the upper bound $\overline{\rho}_{N,0}(\cond)$, we
can follow a transformation process very similar to \Cref{sec:comp-upper-bd}
but with a few changes. In \eqref{eq:overline_rho_0_shifted_inft_dm}
and \eqref{eq:overline_rho_0_shifted_fnt_dm_intract}, we remove the
constraint $\|\pg_{k}\|^{2}\leq c\|g_{k}\|^{2}$, and then add the
constraint $g_{k}=\pg_{k}$. Second, the Grammian matrices defined
in \eqref{eq:grammian-mats-ncg-Lyapunov-1} stays the same, and in
\eqref{eq:rho_overline_bold_vectors} the vectors $\{\mathbf{x}_{i},\mathbf{g}_{i},\mathbf{f}_{i}\}_{i\in I_{N}^{\star}}$
stays the same except we set $\mathbf{d}_{k}=\mathbf{g}_{k}=e_{2}\in\mathbb{R}^{2N+3}$,
which ensures that $d_{k}=F\mathbf{d}_{k}=g_{k}$. We then remove
the constraint $\tr G\widetilde{C}_{k,\star}\leq c\tr GC_{k,\star}$
from \eqref{eq:NCG-PEP-upper-bound-final} and finally set $k\triangleq0$
in the resultant QCQP. The solution to the nonconvex QCQP will provide
us the upper bound $\overline{\rho}_{N,0}(\cond)$ in \eqref{eq:PEP_rho_N0}.

To compute the lower bound $\underline{\rho}_{N,0}(\cond)$, we follow
the same set of changes described in the last paragraph but to \eqref{eq:N-point-ncg-pep-lower-bound}
in \Cref{sec:comp-lower-bd}.

\subsubsection{The relative gap between the lower bounds and upper bounds}

\label{sec:rel_gap_tables}

Tables \ref{tab:PRP_default}, \ref{tab:PRP_restarted}, \ref{tab:FR_restarted}
record the relative gap between lower bounds and upper bounds for
a few representative values of $q$ obtained by solving the aforementioned
nonconvex QCQPs associated with \eqref{eq:PEP_rho_N} and \eqref{eq:PEP_rho_N0}
using our custom spatial branch-and-bound algorithm described in Section
\ref{app:ncg-pep-alg}. Note that the tables contain a few negative
entries close to zero which are due to the absolute gap being of the
same order as the accuracy of the solver ($1\textrm{e}-6$). For the
full list for all values, we refer to our open-source code, which
also allows for computing these bounds for a user-specified value
of $q$ as well. In all cases, the relative gap is less than $10\%$.
In most cases, it is significantly better.

\begin{table}[!ht]
\centering{}%
\begin{tabular}{cccccccccc}
\specialrule{2pt}{1pt}{1pt} {$\cond=$} & 0.001 & 0.005 & 0.02 & 0.04 & 0.06 & 0.08 & 0.1 & 0.3 & 0.5\tabularnewline
\specialrule{2pt}{1pt}{1pt} $N=1$ & $3\textrm{e}{-8}$ & $-1\textrm{e}{-6}$ & $3\textrm{e}{-9}$ & $6\textrm{e}{-8}$ & $9\textrm{e}{-8}$ & $2\textrm{e}{-7}$ & $2\textrm{e}{-7}$ & $1\textrm{e}{-6}$ & $3\textrm{e}{-7}$\tabularnewline
$N=2$ & $2\textrm{e}{-6}$ & $6\textrm{e}{-7}$ & $-3\textrm{e}{-8}$ & $9\textrm{e}{-8}$ & $1\textrm{e}{-7}$ & $8\textrm{e}{-8}$ & $3\textrm{e}{-7}$ & $8\textrm{e}{-3}$ & $4\textrm{e}{-4}$\tabularnewline
$N=3$ & $5\textrm{e}{-6}$ & $5\textrm{e}{-4}$ & $7\textrm{e}{-3}$ & $2\textrm{e}{-2}$ & $3\textrm{e}{-2}$ & $4\textrm{e}{-2}$ & $2\textrm{e}{-2}$ & $5\textrm{e}{-2}$ & $-3\textrm{e}{-7}$\tabularnewline
$N=4$ & $2\textrm{e}{-4}$ & $3\textrm{e}{-3}$ & $2\textrm{e}{-2}$ & $7\textrm{e}{-2}$ & $1\textrm{e}{-1}$ & $3\textrm{e}{-2}$ & $4\textrm{e}{-2}$ & $4\textrm{e}{-2}$ & $4\textrm{e}{-2}$\tabularnewline
\specialrule{2pt}{1pt}{1pt} &  &  &  &  &  &  &  &  & \tabularnewline
\end{tabular}

\caption{Relative gaps $\nicefrac{\overline{\rho}_{N}(\cond,c)-\underline{\rho}_{N}(\cond,c)}{\overline{\rho}_{N}(\cond,c)}$
for {\PRP} with $c=\nicefrac{(1+\cond)^{2}}{4\cond}$.}
\label{tab:PRP_default}
\end{table}

\begin{table}[!ht]
\centering{}%
\begin{tabular}{cccccccccc}
\specialrule{2pt}{1pt}{1pt} {$\cond=$} & 0.001 & 0.005 & 0.02 & 0.04 & 0.06 & 0.08 & 0.1 & 0.3 & 0.5\tabularnewline
\specialrule{2pt}{1pt}{1pt} $N=2$ & $7\textrm{e}{-6}$ & $2\textrm{e}{-4}$ & $2\textrm{e}{-3}$ & $7\textrm{e}{-3}$ & $1\textrm{e}{-2}$ & $1\textrm{e}{-2}$ & $2\textrm{e}{-2}$ & $1\textrm{e}{-2}$ & $1\textrm{e}{-6}$\tabularnewline
$N=3$ & $5\textrm{e}{-5}$ & $9\textrm{e}{-4}$ & $1\textrm{e}{-2}$ & $3\textrm{e}{-2}$ & $5\textrm{e}{-2}$ & $6\textrm{e}{-2}$ & $6\textrm{e}{-2}$ & $5\textrm{e}{-3}$ & $-7\textrm{e}{-6}$\tabularnewline
$N=4$ & $3\textrm{e}{-4}$ & $4\textrm{e}{-3}$ & $3\textrm{e}{-2}$ & $4\textrm{e}{-2}$ & $9\textrm{e}{-2}$ & $9\textrm{e}{-2}$ & $7\textrm{e}{-2}$ & $3\textrm{e}{-2}$ & $7\textrm{e}{-2}$\tabularnewline
\specialrule{2pt}{1pt}{1pt} &  &  &  &  &  &  &  &  & \tabularnewline
\end{tabular}

\caption{Relative gap $\nicefrac{\overline{\rho}_{N,0}(q)-\underline{\rho}_{N,0}(q)}{\overline{\rho}_{N,0}(q)}$
for {\PRP} where $N=2,3,4$. The case $N=1$ is omitted, as {\PRP}
is equivalent to {\GDEL} in this case.}
\label{tab:PRP_restarted}
\end{table}

\begin{table}[!ht]
\centering{}%
\begin{tabular}{cccccccccc}
\specialrule{2pt}{1pt}{1pt} {$\cond=$} & 0.001 & 0.005 & 0.02 & 0.04 & 0.06 & 0.08 & 0.1 & 0.3 & 0.5\tabularnewline
\specialrule{2pt}{1pt}{1pt} $N=2$ & $9\textrm{e}{-6}$ & $2\textrm{e}{-4}$ & $1\textrm{e}{-3}$ & $7\textrm{e}{-3}$ & $1\textrm{e}{-2}$ & $1\textrm{e}{-2}$ & $2\textrm{e}{-2}$ & $1\textrm{e}{-2}$ & $8\textrm{e}{-7}$\tabularnewline
$N=3$ & $7\textrm{e}{-5}$ & $1\textrm{e}{-3}$ & $1\textrm{e}{-2}$ & $2\textrm{e}{-2}$ & $3\textrm{e}{-2}$ & $3\textrm{e}{-2}$ & $3\textrm{e}{-2}$ & $3\textrm{e}{-7}$ & $-1\textrm{e}{-7}$\tabularnewline
$N=4$ & $2\textrm{e}{-4}$ & $3\textrm{e}{-3}$ & $2\textrm{e}{-2}$ & $3\textrm{e}{-2}$ & $3\textrm{e}{-2}$ & $2\textrm{e}{-2}$ & $1\textrm{e}{-2}$ & $1\textrm{e}{-6}$ & $4\textrm{e}{-2}$\tabularnewline
\specialrule{2pt}{1pt}{1pt} &  &  &  &  &  &  &  &  & \tabularnewline
\end{tabular}

\caption{The relative gap $\nicefrac{\overline{\rho}_{N,0}(q)-\underline{\rho}_{N,0}(q)}{\overline{\rho}_{N,0}(q)}$
for {\FR} where $N=2,3,4$. The case $N=1$ is omitted again, as
in this case {\FR} is equivalent to {\GDEL}.}
\label{tab:FR_restarted}
\end{table}

The next sections discuss and draw a few conclusions from the numerical
worst-case convergence results for~PRP and~FR.

}

\subsection{Improved worst-case bounds for PRP}\label{sec:num-res-prp}

Figure~\ref{fig:PRP_Lyapunov} reports the worst-case values of the ``Lyapunov'' ratio $\nicefrac{f(x_{k+N} ) - f_\star}{f(x_{k})-f_\star}$ as a function of the inverse condition number $q\triangleq \nicefrac{\mu}{L}$ and for $c=\nicefrac{(1+\cond)^{2}}{4\cond}$ and $N=1,2,3,4$. This worst-case ratio seems to improve as $N$ grows, but does not outperform gradient descent with exact line search ({\GDEL}). The diminishing improvements with $N$ also suggests the worst-case performance of {\PRP} in this regime might not outperform {\GDEL} even for larger values of $N\geq 4$, albeit probably getting close to the same asymptotic worst-case convergence rate.

\begin{figure}[!ht]
\centering %
\input{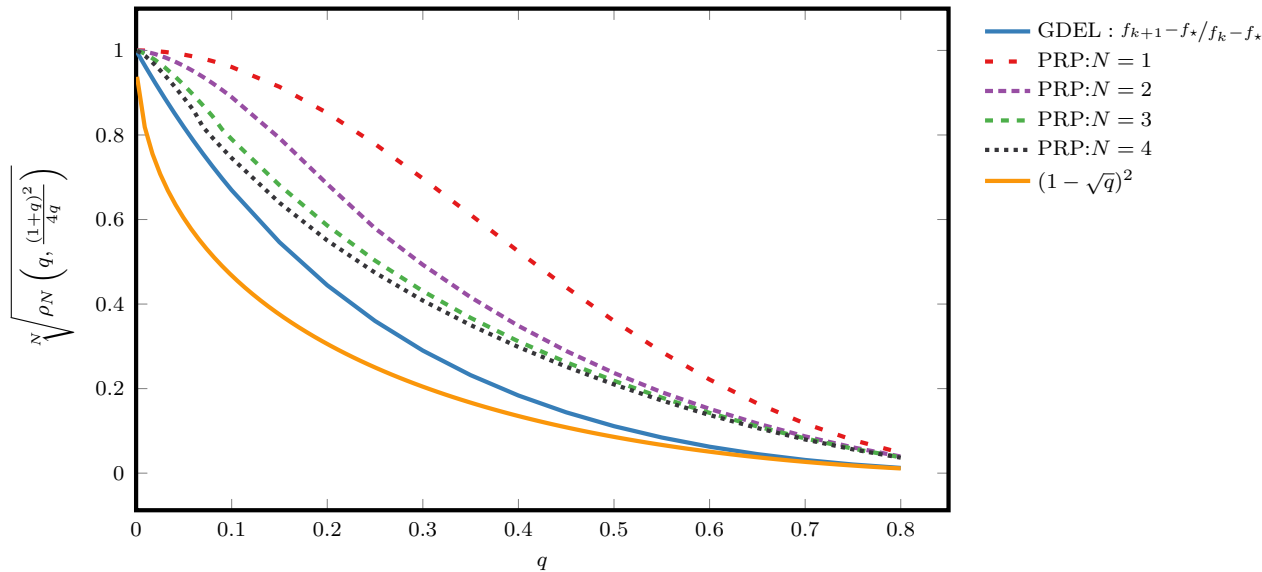}
\caption{This figure reports the worst-case values for the ``Lyapunov'' ratio $\sqrt[N]{\nicefrac{f(x_{k+N} ) - f_\star}{f(x_{k})-f_\star}}$ vs.~the condition number $\cond\triangleq \nicefrac{\mu}{L}$ for {\PRP}. We compute $\rho_N(\cond,c)$ with $c=\nicefrac{(1+\cond)^{2}}{4\cond}$ for
$N=1,2,3,4$. As $N$ increases, the worst-case $\sqrt[N]{\nicefrac{f_{k+N}-f_{\star}}{f_{k}-f_{\star}}}$
improves, but remains worse than that of gradient descent with exact line search ({\GDEL}). The curve $(1-\sqrt{q})^2$ (orange) corresponds to the rate of the lower complexity bounds for this class of problems~\cite{drori2022oracle}.\label{fig:PRP_Lyapunov}}
\end{figure} 

As a complement, Figure~\ref{fig:PRP_exact} shows how {\PRP}'s worst-case ratio $\nicefrac{f_{N}-f_{\star}}{f_{0}-f_{\star}}$ evolves as a function of $q$ for $N=1,2,3,4$. The worst-case performance of {\PRP} in this setup seems to be similar to that of {\GDEL}. Further, for small $q$ (which is typically the only regime of interest for large-scale optimization), {\PRP}'s worst-case performance seems to be slightly better {than that} of~{\GDEL}. On the other hand, for larger~$q$, {\PRP} performs slightly worse than {\GDEL}.

\begin{figure}[!ht]
\centering %
\input{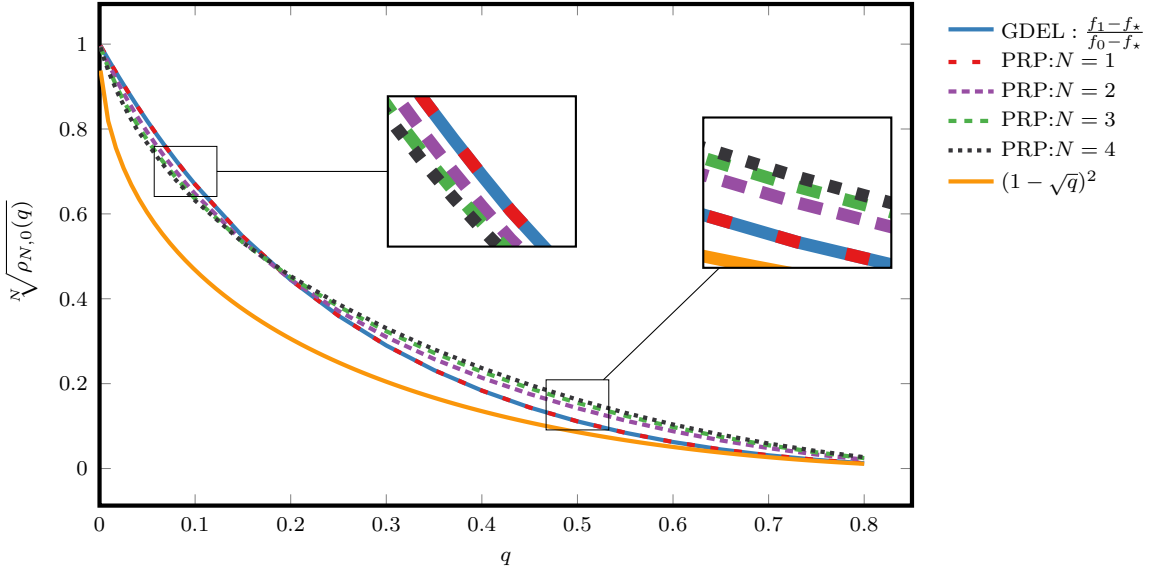}
\caption{This figure reports the worst-case values for the ratio $\sqrt[N]{\nicefrac{f_{N}-f_{\star}}{f_{0}-f_{\star}}}$ vs.~$q$ for {\PRP} for $N=1,2,3,4$. For $N=1$, {\PRP} and {\GDEL} perform the same iteration. For $N=2,3,4$, the worst-case ratio of {\PRP} is better than that of {\GDEL} for $ \cond \leq 0.1$. The curve $(1-\sqrt{q})^2$ (orange) corresponds to the rate of the lower complexity bounds for this class of problems~\cite{drori2022oracle}.\label{fig:PRP_exact}}
\end{figure} 

As a conclusion, we believe there is no hope to prove uniformly better worst-case bounds for~PRP than those for~GDEL for smooth strongly convex minimization. However, we might be able to prove improvements for small values of $q$ at the cost of possibly very technical proofs. As for the Lyapunov approach, the numerical results from this section could be improved by further increasing $N$, but we believe that the transient behavior  does not suggest this direction to be promising. We recall that we computed the bounds by solving an optimization problem whose feasible points correspond to worst-case examples. Therefore, the numerical results provided in this section are backed-up by numerically constructed examples on which PRP behaves ``badly''.

\subsection{Improved worst-case bounds for {\FR}} \label{subsec:Numerical-results-for-FR}

Figure~\ref{fig:FR_exact} reports the worst-case values for the ratio $\nicefrac{f_{N}-f_{\star}}{f_{0}-f_{\star}}$ as a function of~$q$, for $N\in\{1,2,3,4\}$. The convergence bounds appears to be marginally better than {\GDEL} for some sufficiently small inverse condition numbers. Further, the range of values of $q$ for which there is an improvement appears to be decreasing with~$N\geq 2$. Beyond this range, the worst-case values become significantly worse than that of {\GDEL}. Though apparently not as dramatic as the worst-case bound from  Theorem~\ref{thm:FR_2}, the quality of the bound appears to be decreasing with $N$, which stands in line with the practical need to restart the method~\cite{hager2006survey}.

\begin{figure}[!ht]
\centering %
\input{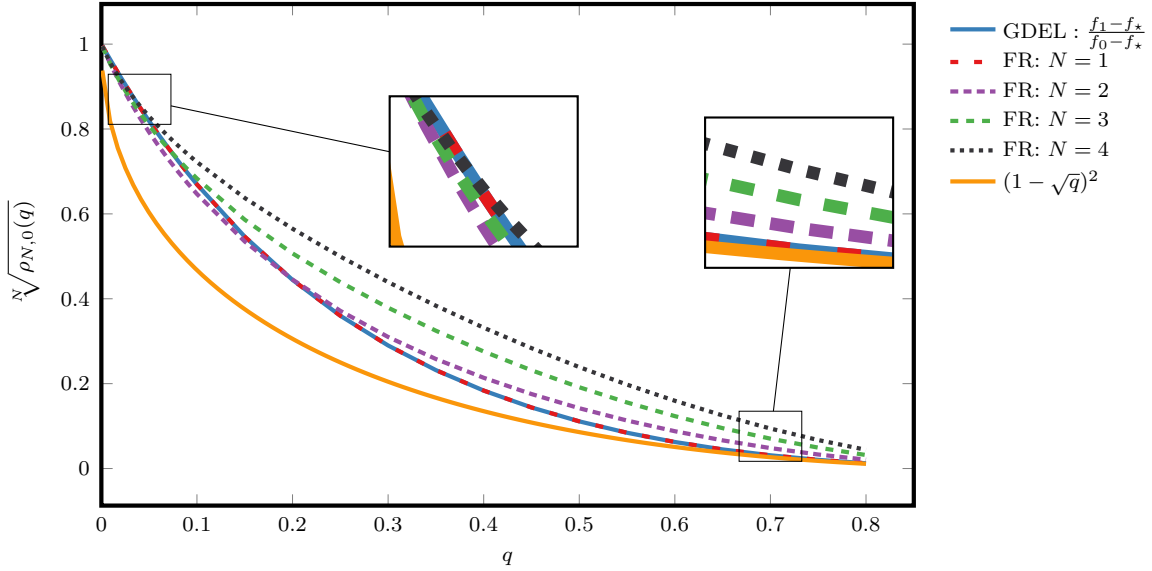}
\caption{This figure reports the worst-case values for the ratio $\sqrt[N]{\nicefrac{f_{N}-f_{\star}}{f_{0}-f_{\star}}}$ vs.~$q$ for {\FR} for $N=1,2,3,4$. For $N=1$, {\FR} and {\GDEL} perform the same iteration. For $N=2,3,4$, the worst-case bound for {\FR} is slightly better than that of {\GDEL} for small enough values of~$\cond$,
and gets larger than {\GDEL} for larger values of~$\cond$. The range of $\cond$ for which {\FR} is better
than {\GDEL} gets smaller as $N\geq 2$ increases. The curve $(1-\sqrt{q})^2$ (orange) corresponds to the rate of the lower complexity bounds for this class of problems~\cite{drori2022oracle}.\label{fig:FR_exact}}
\end{figure} 

As in the previous section, we recall that those curves were obtained by numerically constructing ``bad'' worst-case counter-examples satisfying our assumptions. In other words, there is no hope to obtain better results without adding assumptions or changing the types of bounds under consideration.

{

\section{Custom spatial branch-and-bound algorithm}\label{app:ncg-pep-alg}

This section discusses implementation details for solving
the nonconvex QCQPs of this paper (namely~\eqref{eq:two-point-adapep-qcqp},~\eqref{eq:NCG-PEP-upper-bound-final}, or~\eqref{eq:NCG-PEP-lower-bound}) using a custom spatial branch-and-bound method. This strategy proceeds in three stages, as follows. 
\begin{itemize}
\item \textbf{Stage 1: Compute a feasible solution.} First,
we construct a feasible solution to the nonconvex QCQP. We do that by generating a random $\mu$-strongly convex and $L$-smooth quadratic
function, and by applying the corresponding nonlinear conjugate gradient method on it. The corresponding iterates, gradient and function values correspond to a feasible point for the nonconvex QCQPs under consideration.

\item \textbf{Stage 2: Compute a locally optimal solution by warm-starting
at Stage 1 solution. }Stage 2 computes a locally optimal solution
to the nonconvex QCQPs using an interior-point algorithm, warm-starting
at the feasible solution produced by Stage 1. When a good warm-starting
point is provided, interior-point algorithms can quickly converge
to a locally optimal solution under suitable regularity conditions
\cite{byrd1997local,wachter2006implementation}, \cite[\S 3.3]{fiacco1990nonlinear}.
In the situation where the interior-point algorithm fails to converge,
we go back to the feasible solution from Stage 1. We
have empirically observed that Stage 2 consistently provides a locally
optimal solution. 
\item \textbf{Stage 3: Compute a globally optimal solution by warm-starting
at Stage 2 solution.} Stage 3 computes a globally optimal solution
to the nonconvex QCQP using a spatial branch-and-bound algorithm \cite{Gurobi,locatelli2013global},
warm-starting at the locally-optimal solution produced by Stage 2.
For details about how spatial branch-and-bound algorithm works, we
refer the reader to \cite[\S 4.1]{gupta2022branch}.
\end{itemize}

\begin{remark}
    In stage 3, the most numerically challenging nonconvex quadratic constraint in
\eqref{eq:two-point-adapep-qcqp}, \eqref{eq:NCG-PEP-upper-bound-final}
or \eqref{eq:NCG-PEP-lower-bound} is $G=H^{\top} H$. To solve those problems in reasonable times, we use the \emph{lazy constraints} approach,
\cite[\S 4.2.5]{gupta2022branch}.

In short, we replace the constraint {$G=H^{\top}H$} by the
infinite set of linear constraints $\tr\left(Gyy^{\top}\right)\geq0$
for all $y$, which we then sample to obtain a finite set of linear constraints (we recursively add additional linear constraints afterwards if need be). More precisely, we use
\begin{equation}
\tr\left(Gyy^{\top}\right)\geq0,\quad y\in Y,\label{eq:starting-set-lazy}
\end{equation}
where the initial $Y$ is generated randomly as a set of 
unit vectors following the methodology described in \cite[$\mathsection$5.1]{benson2003solving}.
By replacing $G=H^{\top} H$ by~\eqref{eq:starting-set-lazy} we obtain a simpler (but relaxed) QCQP. Then, we update the solution
$G$ lazily by repeating the following steps until $G\succeq0$ is
satisfied subject to a termination criterion. Practically speaking, our termination criterion is that the minimal eigenvalue of $G$ is larger than $\epsilon\approx-1\textrm{e}-6$; until then, we repeat the following procedure: 
\begin{enumerate}
\item Solve the relaxation of the nonconvex QCQPs, where~\eqref{eq:starting-set-lazy}
is used instead of $G=H^{\top} H$, which provides us an upper bound on the original nonconvex QCQP. 
\item Compute the minimal eigenvalue $\textup{eig}_{\textup{min}}(G)$ and the corresponding eigenvector $u$ of $G$. If $\textup{eig}_{\textup{min}}(G)\ge0$,
we reached an optimal solution to the nonconvex QCQP and we terminate. 
\item If $\textup{eig}_{\textup{min}}(G)<0$, we add a constraint
$\tr(Guu^{\top})\geq0$ \emph{lazily,} which makes the current $G$
infeasible for the new relaxation. We use the lazy constraint
callback interface of $\textup{\texttt{JuMP}}$ to add constraints lazily, which
means that after adding one additional linear constraint, updating
the solution in step~1 is efficient since $\textup{\texttt{Gurobi}}$ and all
modern solvers based on the simplex algorithm can quickly update a
solution when only one linear constraint is added \cite[pp. 205-207]{bertsimas1997introduction}. 
\end{enumerate}
\end{remark}

}

\section{Conclusion}\label{sec:conclusion}

This works studies the iteration complexity of two variants of nonlinear conjugate gradients, namely the Polak-Ribière-Polyak (PRP) and the Fletcher-Reeves (FR) methods. We provide novel complexity bounds for both those methods, and show that albeit unsatisfying, not much can a priori be gained from a worst-case perspective, as both methods appear to behave similar or worse to regular steepest descent in the worst-case. Further, those results suggest that explaining the good practical performances of NCGMs might be out of reach for traditional worst-case complexity analyses on classical classes of problems.

This work considers only somewhat ``ideal'' variants of nonlinear conjugate gradient methods, as we make explicit use of exact line search procedures. However, there is a priori no reason to believe that inexact line search procedures would improve the possibly bad worst-case behaviors. Further, the \emph{performance estimation} methodology allows taking such inexact line search procedures into account, so the same methodology could be applied for tackling those questions. We leave such investigations for future work.

\section*{Acknowledgments}
S. Das Gupta and R.~M.~Freund acknowledge support by AFOSR Grant No.~FA9550-22-1-0356. A. Taylor acknowledges support from the European Research Council (grant SEQUOIA 724063). This work was partly funded by the French government under management of Agence Nationale de la Recherche as part of the ``Investissements d’avenir'' program, reference ANR-19-P3IA-0001 (PRAIRIE 3IA Institute).

The authors thank Nizar Bousselmi and Ian Ruffolo for careful reading of the manuscript and constructive feedback.

\bibliographystyle{unsrt}
\bibliography{bib}

\begin{thebibliography}{10}

\bibitem{polyak1969conjugate}
Boris~T. Polyak.
\newblock The conjugate gradient method in extremal problems.
\newblock {\em USSR Computational Mathematics and Mathematical Physics},
  9(4):94--112, 1969.

\bibitem{polak1969note}
Elijah Polak and G\'{e}rard Ribi\`{e}re.
\newblock Note sur la convergence de m{\'e}thodes de directions conjugu{\'e}es.
\newblock {\em Revue fran{\c{c}}aise d'informatique et de recherche
  op{\'e}rationnelle. S{\'e}rie rouge}, 3(16):35--43, 1969.

\bibitem{fletcher1964function}
Roger Fletcher and Colin~M. Reeves.
\newblock Function minimization by conjugate gradients.
\newblock {\em The Computer Journal}, 7(2):149--154, 1964.

\bibitem{hestenes1952methods}
Magnus~R. Hestenes and Eduard Stiefel.
\newblock Methods of conjugate gradients for solving linear systems.
\newblock {\em Journal of Research of the National Bureau of Standards},
  49(6):409, 1952.

\bibitem{fletcher1987practical}
Roger Fletcher.
\newblock {\em Practical Methods of Optimization, Volume 1: Unconstrained
  Optimization}.
\newblock John Wiley \& Sons, 1987.

\bibitem{dai1999nonlinear}
Yu-Hong Dai and Yaxiang Yuan.
\newblock A nonlinear conjugate gradient method with a strong global
  convergence property.
\newblock {\em SIAM Journal on optimization}, 10(1):177--182, 1999.

\bibitem{hager2006survey}
William~W. Hager and Hongchao Zhang.
\newblock A survey of nonlinear conjugate gradient methods.
\newblock {\em Pacific Journal of Optimization}, 2(1):35--58, 2006.

\bibitem{andrei2020nonlinear}
Neculai Andrei.
\newblock {\em Nonlinear Conjugate Gradient Methods for Unconstrained
  Optimization}.
\newblock Springer, 2020.

\bibitem{drori2014performance}
Yoel Drori and Marc Teboulle.
\newblock Performance of first-order methods for smooth convex minimization: A
  novel approach.
\newblock {\em Mathematical Programming}, 145(1):451--482, 2014.

\bibitem{taylor2017smooth}
Adrien~B. Taylor, Julien~M. Hendrickx, and Fran{\c{c}}ois Glineur.
\newblock Smooth strongly convex interpolation and exact worst-case performance
  of first-order methods.
\newblock {\em Mathematical Programming}, 161(1):307--345, 2017.

\bibitem{boyd2004convex}
Stephen Boyd and Lieven Vandenberghe.
\newblock {\em Convex Optimization}.
\newblock Cambridge University Press, 2004.

\bibitem{armijo1966minimization}
Larry Armijo.
\newblock Minimization of functions having lipschitz continuous first partial
  derivatives.
\newblock {\em Pacific Journal of Mathematics}, 16(1):1--3, 1966.

\bibitem{goldstein1965steepest}
Allen~A. Goldstein.
\newblock On steepest descent.
\newblock {\em Journal of the Society for Industrial and Applied Mathematics,
  Series A: Control}, 3(1):147--151, 1965.

\bibitem{wolfe1969convergence}
Philip Wolfe.
\newblock Convergence conditions for ascent methods.
\newblock {\em SIAM Review}, 11(2):226--235, 1969.

\bibitem{wolfe1971convergence}
Philip Wolfe.
\newblock Convergence conditions for ascent methods {II}: Some corrections.
\newblock {\em SIAM Review}, 13(2):185--188, 1971.

\bibitem{hager2005new}
William~W. Hager and Hongchao Zhang.
\newblock A new conjugate gradient method with guaranteed descent and an
  efficient line search.
\newblock {\em SIAM Journal on Optimization}, 16(1):170--192, 2005.

\bibitem{grippo1986nonmonotone}
Luigi Grippo, Francesco Lampariello, and Stephano Lucidi.
\newblock A nonmonotone line search technique for newton’s method.
\newblock {\em SIAM Journal on Numerical Analysis}, 23(4):707--716, 1986.

\bibitem{zhang2004nonmonotone}
Hongchao Zhang and William~W Hager.
\newblock A nonmonotone line search technique and its application to
  unconstrained optimization.
\newblock {\em SIAM Journal on Optimization}, 14(4):1043--1056, 2004.

\bibitem{huang2015nonmonotone}
Shuai Huang, Zhong Wan, and Xiaohong Chen.
\newblock A new nonmonotone line search technique for unconstrained
  optimization.
\newblock {\em Numerical Algorithms}, 68(4):671--689, 2015.

\bibitem{gorman1988analysis}
R.~Paul Gorman and Terrence~J. Sejnowski.
\newblock Analysis of hidden units in a layered network trained to classify
  sonar targets.
\newblock {\em Neural Networks}, 1(1):75--89, 1988.

\bibitem{nesterov1983method}
Yurii Nesterov.
\newblock {A method for solving the convex programming problem with convergence
  rate ${O}(1/k^2)$}.
\newblock {\em Doklady Akademii Nauk}, 1983.

\bibitem{d2021acceleration}
Alexandre d’Aspremont, Damien Scieur, and Adrien~B. Taylor.
\newblock Acceleration methods.
\newblock {\em Foundations and Trends{\textregistered} in Optimization},
  5(1-2):1--245, 2021.

\bibitem{drori2020efficient}
Yoel Drori and Adrien~B. Taylor.
\newblock Efficient first-order methods for convex minimization: a constructive
  approach.
\newblock {\em Mathematical Programming}, 184(1):183--220, 2020.

\bibitem{taylor2017exact}
Adrien~B. Taylor, Julien~M. Hendrickx, and Fran{\c{c}}ois Glineur.
\newblock Exact worst-case performance of first-order methods for composite
  convex optimization.
\newblock {\em SIAM Journal on Optimization}, 27(3):1283--1313, 2017.

\bibitem{barre2020complexity}
Mathieu Barr{\'e}, Adrien~B. Taylor, and Alexandre d’Aspremont.
\newblock Complexity guarantees for {Polyak} steps with momentum.
\newblock In {\em Conference on Learning Theory}, 2020.

\bibitem{barre2021worst}
Mathieu Barr{\'e}.
\newblock {\em Worst-Case Analysis of Efficient First-Order Methods}.
\newblock PhD thesis, Universit{\'e} Paris Sciences \& Lettres, 2021.

\bibitem{gupta2022branch}
Shuvomoy Das~Gupta, Bart~P.G. Van~Parys, and Ernest~K. Ryu.
\newblock Branch-and-bound performance estimation programming: A unified
  methodology for constructing optimal optimization methods.
\newblock {\em Mathematical Programming}, pages 1--73, 2023.

\bibitem{reuther2018interactive}
Albert Reuther, Jeremy Kepner, Chansup Byun, Siddharth Samsi, William Arcand,
  David Bestor, Bill Bergeron, Vijay Gadepally, Michael Houle, Matthew Hubbell,
  Michael Jones, Anna Klein, Lauren Milechin, Julia Mullen, Andrew Prout,
  Antonio Rosa, Charles Yee, and Peter Michaleas.
\newblock Interactive supercomputing on 40,000 cores for machine learning and
  data analysis.
\newblock In {\em 2018 IEEE High Performance extreme Computing Conference
  (HPEC)}, pages 1--6. IEEE, 2018.

\bibitem{JuMPDunningHuchetteLubin2017}
Iain Dunning, Joey Huchette, and Miles Lubin.
\newblock {JuMP}: A modeling language for mathematical optimization.
\newblock {\em SIAM Review}, 59(2):295--320, 2017.

\bibitem{mosek}
MOSEK ApS.
\newblock {\em MOSEK Optimizer API for C 9.3.6}, 2019.

\bibitem{byrd2006k}
Richard~H. Byrd, Jorge Nocedal, and Richard~A. Waltz.
\newblock {KNITRO: An integrated package for nonlinear optimization}.
\newblock In G.~Di Pillo and M.~Roma, editors, {\em Large-Scale Nonlinear
  Optimization}, pages 35--59. Springer, 2006.

\bibitem{Gurobi2024}
{Gurobi Optimization, LLC}.
\newblock {Gurobi Optimizer Reference Manual}, 2023.

\bibitem{goujaud2022pepit}
Baptiste Goujaud, C{\'e}line Moucer, Fran{\c{c}}ois Glineur, Julien~M.
  Hendrickx, Adrien~B. Taylor, and Aymeric Dieuleveut.
\newblock Pepit: computer-assisted worst-case analyses of first-order
  optimization methods in python.
\newblock {\em arXiv:2201.04040}, 2022.

\bibitem{nemirovski1994efficient}
Arkadi Nemirovski.
\newblock {Information-based complexity of convex programming}.
\newblock {\em Lecture notes,
  \url{http://www2.isye.gatech.edu/~nemirovs/Lec_EMCO.pdf}}, 1994.

\bibitem{nemirovski1999optimization}
Arkadi Nemirovski.
\newblock Optimization {II}: Numerical methods for nonlinear continuous
  optimization.
\newblock {\em Lecture notes,
  \url{http://www2.isye.gatech.edu/~nemirovs/Lect_OptII.pdf}}, 1999.

\bibitem{nocedal1999numerical}
Jorge Nocedal and Stephen~J. Wright.
\newblock {\em Numerical Optimization}.
\newblock Springer, 1999.

\bibitem{bonnans2006numerical}
Joseph-Fr{\'e}d{\'e}ric Bonnans, Jean-Charles Gilbert, Claude Lemar{\'e}chal,
  and Claudia~A. Sagastiz{\'a}bal.
\newblock {\em Numerical Optimization: Theoretical and Practical Aspects}.
\newblock Springer, 2006.

\bibitem{al1985descent}
Mehiddin Al-Baali.
\newblock Descent property and global convergence of the fletcher—reeves
  method with inexact line search.
\newblock {\em IMA Journal of Numerical Analysis}, 5(1):121--124, 1985.

\bibitem{dai1997analysis}
Yu-Hong Dai.
\newblock Analysis of conjugate gradient methods.
\newblock {\em Institute of Computational Mathematics and
  Scientific/Engineering Computing. Chinese Academy of Science (in Chinese)},
  1997.

\bibitem{royer2022nonlinear}
R\'emi Chan-Renous-Legoubin and Cl{\'e}ment~W. Royer.
\newblock A nonlinear conjugate gradient method with complexity guarantees and
  its application to nonconvex regression.
\newblock {\em EURO Journal on Computational Optimization}, 10:100044, 2022.

\bibitem{carmon2017convex}
Yair Carmon, John~C. Duchi, Oliver Hinder, and Aaron Sidford.
\newblock “convex until proven guilty”: Dimension-free acceleration of
  gradient descent on non-convex functions.
\newblock In {\em International Conference on Machine Learning}, pages
  654--663. PMLR, 2017.

\bibitem{polyakintroduction}
Boris~T. Polyak.
\newblock {\em Introduction to Optimization.}
\newblock Optimization Software, Inc., New York, 1987.

\bibitem{beck2009fast}
Amir Beck and Marc Teboulle.
\newblock A fast iterative shrinkage-thresholding algorithm for linear inverse
  problems.
\newblock {\em SIAM Journal on Imaging Sciences}, 2(1):183--202, 2009.

\bibitem{de2017worst}
Etienne de~Klerk, Fran{\c{c}}ois Glineur, and Adrien~B. Taylor.
\newblock On the worst-case complexity of the gradient method with exact line
  search for smooth strongly convex functions.
\newblock {\em Optimization Letters}, 11(7):1185--1199, 2017.

\bibitem{nest-book-04}
Yurii Nesterov.
\newblock {\em {Introductory Lectures on Convex Optimization: A Basic Course}}.
\newblock Applied Optimization. Springer, 2004.

\bibitem{drori2022oracle}
Yoel Drori and Adrien~B. Taylor.
\newblock On the oracle complexity of smooth strongly convex minimization.
\newblock {\em Journal of Complexity}, 68:101590, 2022.

\bibitem{de2020worst}
Etienne de~Klerk, Francois Glineur, and Adrien~B. Taylor.
\newblock Worst-case convergence analysis of inexact gradient and newton
  methods through semidefinite programming performance estimation.
\newblock {\em SIAM Journal on Optimization}, 30(3):2053--2082, 2020.

\bibitem{horn2012matrix}
Roger~A. Horn and Charles~R. Johnson.
\newblock {\em Matrix Analysis}.
\newblock Cambridge University Press, 2012.

\bibitem{sympy}
Aaron Meurer, Christopher~P. Smith, Mateusz Paprocki, Ond\v{r}ej
  \v{C}ert\'{i}k, Sergey~B. Kirpichev, Matthew Rocklin, AMiT Kumar, Sergiu
  Ivanov, Jason~K. Moore, Sartaj Singh, Thilina Rathnayake, Sean Vig, Brian~E.
  Granger, Richard~P. Muller, Francesco Bonazzi, Harsh Gupta, Shivam Vats,
  Fredrik Johansson, Fabian Pedregosa, Matthew~J. Curry, Andy~R. Terrel,
  \v{S}t\v{e}p\'{a}n Rou\v{c}ka, Ashutosh Saboo, Isuru Fernando, Sumith Kulal,
  Robert Cimrman, and Anthony Scopatz.
\newblock Sympy: symbolic computing in python.
\newblock {\em PeerJ Computer Science}, 3:e103, January 2017.

\bibitem{wolframLanguage}
Wolfram~Research{,} Inc.
\newblock Wolfram language, {V}ersion 14.0.
\newblock Champaign, IL, 2024.

\bibitem{taylor2017performance}
Adrien~B. Taylor, Julien~M. Hendrickx, and Fran\c{c}ois Glineur.
\newblock Performance estimation toolbox ({PESTO}): Automated worst-case
  analysis of first-order optimization methods.
\newblock {\em Conference on Decision and Control}, 2017.

\bibitem{byrd1997local}
Richard~H. Byrd, Guanghui Liu, and Jorge Nocedal.
\newblock On the local behavior of an interior point method for nonlinear
  programming.
\newblock {\em Numerical Analysis}, 1997:37--56, 1997.

\bibitem{wachter2006implementation}
Andreas W{\"a}chter and Lorenz~T. Biegler.
\newblock On the implementation of an interior-point filter line-search
  algorithm for large-scale nonlinear programming.
\newblock {\em Mathematical Programming}, 106(1):25--57, 2006.

\bibitem{fiacco1990nonlinear}
Anthony~V. Fiacco and Garth~P. McCormick.
\newblock {\em Nonlinear Programming: Sequential Unconstrained Minimization
  Techniques}.
\newblock SIAM, 1990.

\bibitem{Gurobi}
Tobias Achterberg and Eli Towle.
\newblock {Non-Convex Quadratic Optimization: Gurobi 9.0}.
\newblock 2020.
\newblock
  \url{https://www.gurobi.com/resource/non-convex-quadratic-optimization/}.

\bibitem{locatelli2013global}
Marco Locatelli and Fabio Schoen.
\newblock {\em Global Optimization: Theory, Algorithms, and Applications}.
\newblock SIAM, 2013.

\bibitem{benson2003solving}
Hande~Y. Benson and Robert~J. Vanderbei.
\newblock Solving problems with semidefinite and related constraints using
  interior-point methods for nonlinear programming.
\newblock {\em Mathematical Programming}, 95(2):279--302, 2003.

\bibitem{bertsimas1997introduction}
Dimitris Bertsimas and John~N. Tsitsiklis.
\newblock {\em Introduction to Linear Optimization}, volume~6.
\newblock Athena Scientific, 1997.

\end{thebibliography}

\clearpage
\appendix

\section*{Organization of the appendix}

In what follows, we report detailed numerical results and computations that are not presented in the core of the paper. Table~\ref{tab:appendix_summary} details the organization of this additional material.

			\begin{table}[!ht]
				\begin{center}
					{\renewcommand{\arraystretch}{1.4}
						\begin{tabular}{@{}ll@{}}
							\specialrule{2pt}{1pt}{1pt}
							Section & Content \\
							\specialrule{2pt}{1pt}{1pt}
							\multirow{2}{.2\linewidth} 
                                {\appref{sec:num_vec_wcsd} } 
                                &  Numerical illustration of tightness of the worst-case search direction  \\
							& \eqref{eq:PRP_angle} for {\PRP} and \eqref{eq:FR_angle} for {\FR}.\\
                                \hline
             \appref{app:reformulation_lemma} & Reformulations for weighted sum of inequalities for Lemmas \ref{thm:PRP_1}, \ref{Bound-on-beta}, \ref{thm:FR_1} \\
                             \hline
             \appref{subsec:counter-example} & Constructing counter-examples \\
                               
							\specialrule{2pt}{1pt}{1pt}
						\end{tabular}
					}
				\end{center}
				\caption{Organization of the appendix.}\label{tab:appendix_summary}
			\end{table}

\section{Tightness of the worst-case search directions}

\label{sec:num_vec_wcsd}

{Table~\ref{table:PRP_ratio_comp} and Table~\ref{table:FR_ratio_comp} illustrate
the tightness of the bounds~\eqref{eq:PRP_angle} and~\eqref{eq:FR_angle} for~PRP and~FR respectively. That is, we compare the absolute relative difference between the numerical bounds and closed-form bounds for a few different values of $\cond$ and $c_{k-1}$, where numerical bounds are obtained by solving \eqref{eq:two-point-adapep-qcqp} with $\eta=1$ for {\PRP} and $\eta=0$
for {\FR}. These numerical examples strongly suggest
that our bounds cannot be improved in general. }

\begin{table}[!ht]
	\centering{}%
	\begin{tabular}{cccccccccc}
		\specialrule{2pt}{1pt}{1pt} {$\cond=$} & 0.1 & 0.2 & 0.3 & 0.4 & 0.5 & 0.6 & 0.7 & 0.8 \tabularnewline
		\specialrule{2pt}{1pt}{1pt} $c_{k-1} = 1.01$ & $7\textrm{e}{-9}$ & $3\textrm{e}{-7}$ & $1\textrm{e}{-8}$ & $4\textrm{e}{-8}$ & $1\textrm{e}{-8}$ & $2\textrm{e}{-8}$ & $5\textrm{e}{-8}$ & $1\textrm{e}{-8}$ \tabularnewline
		
		$c_{k-1} = 10$ & $4\textrm{e}{-8}$ & $1\textrm{e}{-7}$ & $ 9\textrm{e}{-8}$ & $3\textrm{e}{-8}$ & $7\textrm{e}{-7}$ & $3\textrm{e}{-8}$ & $7\textrm{e}{-9}$ & $4\textrm{e}{-8}$ \tabularnewline
		
		$c_{k-1} = 50$ & $2\textrm{e}{-8}$ & $1\textrm{e}{-7}$ & $2\textrm{e}{-8}$ & $1\textrm{e}{-8}$ & $6\textrm{e}{-8}$ & $7\textrm{e}{-8}$ & $1\textrm{e}{-9}$ & $9\textrm{e}{-8}$ \tabularnewline
		\specialrule{2pt}{1pt}{1pt} &  &  &  &  &  &  &  &  \tabularnewline
	\end{tabular}
	
	\caption{Absolute relative differences in the worst-case analytical bound~\eqref{eq:PRP_angle} and numerical bounds from~\eqref{eq:two-point-adapep-qcqp} with $\eta=1$ (for {\PRP}) for different values of $q$ and
		$c_{k-1}$.  \label{table:PRP_ratio_comp}}
\end{table}

\begin{table}[!ht]
	\centering{}%
	\begin{tabular}{cccccccccc}
		\specialrule{2pt}{1pt}{1pt} {$\cond=$} & 0.1 & 0.2 & 0.3 & 0.4 & 0.5 & 0.6 & 0.7 & 0.8 \tabularnewline
		\specialrule{2pt}{1pt}{1pt} $c_{k-1} = 1.01$ & $3\textrm{e}{-8}$ & $6\textrm{e}{-8}$ & $3\textrm{e}{-8}$ & $9\textrm{e}{-10}$ & $3\textrm{e}{-9}$ & $9\textrm{e}{-10}$ & $6\textrm{e}{-8}$ & $5\textrm{e}{-8}$ \tabularnewline
		
		$c_{k-1} = 10$ & $3\textrm{e}{-9}$ & $2\textrm{e}{-8}$ & $ 2\textrm{e}{-8}$ & $2\textrm{e}{-8}$ & $1\textrm{e}{-8}$ & $7\textrm{e}{-9}$ & $6\textrm{e}{-8}$ & $4\textrm{e}{-8}$ \tabularnewline
		
		$c_{k-1} = 50$ & $8\textrm{e}{-9}$ & $2\textrm{e}{-9}$ & $9\textrm{e}{-10}$ & $1\textrm{e}{-8}$ & $1\textrm{e}{-8}$ & $9\textrm{e}{-7}$ & $4\textrm{e}{-7}$ & $5\textrm{e}{-7}$ \tabularnewline
		\specialrule{2pt}{1pt}{1pt} &  &  &  &  &  &  &  &  \tabularnewline
	\end{tabular}
	
	\caption{Absolute relative differences in the worst-case analytical bound~\eqref{eq:FR_angle} and  numerical bounds from~\eqref{eq:two-point-adapep-qcqp} with $\eta=0$ (for {\FR}) for different values of $q$ and
		$c_{k-1}$.  \label{table:FR_ratio_comp}}
\end{table}

\clearpage

{

\section{Reformulations for weighted sum of inequalities for Lemmas~\ref{thm:PRP_1},~\ref{Bound-on-beta},~\ref{thm:FR_1}}\label{app:reformulation_lemma}

\subsection{Reformulation for weighted sum of inequalities for Lemma \ref{thm:PRP_1}}\label{app:prp_reform}

For notational ease, define $f(x_{k})\triangleq f_{k}$, $f(x_{k-1})\triangleq f_{k-1}$,$\nabla f({x_{k}})\triangleq g_{k}$,
$\nabla f(x_{k-1})\triangleq g_{k-1}$, $\beta_{k-1}\triangleq\beta$,
$\gamma_{k-1}\triangleq\gamma$, and $c_{k-1}\triangleq c$. We want
to show that 

\begin{align} & -\frac{\beta^{2}(q+1)}{\gamma Lq}\langle g_{k} ;\, d_{k-1}\rangle \nonumber \\
 & +\frac{\beta^{2}(q+1)^{2}}{\gamma^{2}L(1-q)q}\bigg[f_{k}-f_{k-1}+\gamma\langle g_{k};\,d_{k-1}\rangle+\tfrac{1}{2L}\|g_{k-1}-g_{k}\|^{2}  \nonumber\\
 & +\tfrac{\mu}{2(1-\mu/L)}\|\gamma d_{k-1}-\tfrac{1}{L}(g_{k-1}-g_{k})\|^{2}\bigg]  \nonumber \\
 & +\frac{\beta^{2}(q+1)^{2}}{\gamma^{2}L(1-q)q}\bigg[f_{k-1}-f_{k}-\gamma\langle g_{k-1};\,d_{k-1}\rangle+\tfrac{1}{2L}\|g_{k-1}-g_{k}\|^{2} \nonumber\\
 & +\tfrac{\mu}{2(1-\mu/L)}\|\gamma d_{k-1}-\tfrac{1}{L}(g_{k-1}-g_{k})\|^{2}\bigg] \nonumber\\
 & +\frac{\beta(q+1)}{\gamma Lq}\big[\langle g_{k-1};\,g_{k}\rangle-\|g_{k}\|^{2}+\beta\langle g_{k-1};\,d_{k-1}\rangle\big]  \label{eq:prp_big_term_1}
\end{align}

is equal to 
\begin{align} & \|d_{k}\|^{2}-\frac{(1+q)^{2}}{4q}\|g_{k}\|^{2} \nonumber \\
 & +\frac{4\beta^{2}q}{(1-q)^{2}}\left\Vert d_{k-1}-\tfrac{1+q}{2L\gamma_{k-1}q}g_{k-1}+\tfrac{2\beta(1+q)-L\gamma(1-q)^{2}}{4\beta L\gamma q}g_{k}\right\Vert ^{2}. \label{eq:prp_simp_term}
\end{align}

We show this by expanding both terms and then doing term-by-term matching. 

\paragraph{Expand the second summand of (\ref{eq:prp_big_term_1}).}

First, we expand the second summand of (\ref{eq:prp_big_term_1})
as follows.

\begin{align} & \frac{\beta^{2}(q+1)^{2}}{\gamma^{2}L(1-q)q}\bigg[f_{k}-f_{k-1}+\gamma\langle g_{k};\,d_{k-1}\rangle+\tfrac{1}{2L}\|g_{k-1}-g_{k}\|^{2}\nonumber\\
 & +\tfrac{\mu}{2(1-\mu/L)}\|\gamma d_{k-1}-\tfrac{1}{L}(g_{k-1}-g_{k})\|^{2}\bigg]\nonumber\\
= & \frac{\beta^{2}(q+1)^{2}}{\gamma^{2}L(1-q)q}\bigg[f_{k}-f_{k-1}+\gamma\langle g_{k};\,d_{k-1}\rangle+\tfrac{1}{2L}(\|g_{k-1}\|^{2}-2\left\langle g_{k};\,g_{k-1}\right\rangle +\|g_{k}\|^{2})\nonumber\\
 & +\tfrac{\mu}{2(1-\mu/L)}\Big(\gamma^{2}\|d_{k-1}\|^{2}+\frac{1}{L^{2}}\|g_{k-1}\|^{2}+\frac{1}{L^{2}}\|g_{k}\|^{2}-\frac{2}{L^{2}}\left\langle g_{k};\,g_{k-1}\right\rangle \nonumber\\
 & +\frac{2\gamma}{L}\left\langle d_{k-1};\,g_{k}\right\rangle -\frac{2\gamma}{L}\left\langle d_{k-1};\,g_{k-1}\right\rangle \Big)\bigg]\nonumber\\
= & \frac{\beta^{2}(\mu+L)^{2}}{\gamma^{2}\mu L(L-\mu)}f_{k}-\frac{\beta^{2}(\mu+L)^{2}}{\gamma^{2}\mu L(L-\mu)}f_{k-1}\nonumber\\
 & +\frac{\beta^{2}(\mu+L)^{2}}{2(L-\mu)^{2}}\|d_{k-1}\|^{2}+\frac{\beta^{2}(\mu+L)^{2}}{2\gamma^{2}\mu L(L-\mu)^{2}}\|g_{k-1}\|^{2}+\frac{\beta^{2}(\mu+L)^{2}}{2\gamma^{2}\mu L(L-\mu)^{2}}\|g_{k}\|^{2}\nonumber\\
 & -\frac{\beta^{2}(\mu+L)^{2}}{\gamma^{2}\mu L(L-\mu)^{2}}\left\langle g_{k-1};\,g_{k}\right\rangle +\frac{\beta^{2}(\mu+L)^{2}}{\gamma\mu(L-\mu)^{2}}\left\langle g_{k};\,d_{k-1}\right\rangle -\frac{\beta^{2}(\mu+L)^{2}}{\gamma L(L-\mu)^{2}}\left\langle g_{k-1};\,d_{k-1}\right\rangle ,\label{eq:prp_summand_2_expanded}
\end{align}

 where on the second line we expand the squares and on the third line
we collect the terms.

\paragraph{Expand the third summand of (\ref{eq:prp_big_term_1}).}

Next, we expand the third summand of (\ref{eq:prp_big_term_1}) as
follows:
\begin{align} & \frac{\beta^{2}(q+1)^{2}}{\gamma^{2}L(1-q)q}\bigg[f_{k-1}-f_{k}-\gamma\langle g_{k-1};\,d_{k-1}\rangle+\tfrac{1}{2L}\|g_{k-1}-g_{k}\|^{2}\nonumber\\
 & +\tfrac{\mu}{2(1-\mu/L)}\|\gamma d_{k-1}-\tfrac{1}{L}(g_{k-1}-g_{k})\|^{2}\bigg]\nonumber\\
= & \frac{\beta^{2}(q+1)^{2}}{\gamma^{2}L(1-q)q}\bigg[f_{k-1}-f_{k}-\gamma\langle g_{k-1};\,d_{k-1}\rangle+\tfrac{1}{2L}(\|g_{k-1}\|^{2}-2\left\langle g_{k};\,g_{k-1}\right\rangle +\|g_{k}\|^{2})\nonumber\\
 & +\tfrac{\mu}{2(1-\mu/L)}\Big(\gamma^{2}\|d_{k-1}\|^{2}+\frac{1}{L^{2}}\|g_{k-1}\|^{2}+\frac{1}{L^{2}}\|g_{k}\|^{2}-\frac{2}{L^{2}}\left\langle g_{k};\,g_{k-1}\right\rangle \nonumber\\
 & +\frac{2\gamma}{L}\left\langle d_{k-1};\,g_{k}\right\rangle -\frac{2\gamma}{L}\left\langle d_{k-1};\,g_{k-1}\right\rangle \Big)\bigg]\nonumber\\
= & \frac{\beta^{2}(\mu+L)^{2}}{\gamma^{2}\mu L(L-\mu)}f_{k-1}-\frac{\beta^{2}(\mu+L)^{2}}{\gamma^{2}\mu L(L-\mu)}f_{k}\nonumber\\
 & +\frac{\beta^{2}(\mu+L)^{2}}{2(L-\mu)^{2}}\|d_{k-1}\|^{2}+\frac{\beta^{2}(\mu+L)^{2}}{2\gamma^{2}\mu L(L-\mu)^{2}}\|g_{k-1}\|^{2}+\frac{\beta^{2}(\mu+L)^{2}}{2\gamma^{2}\mu L(L-\mu)^{2}}\|g_{k}\|^{2}\nonumber\\
 & -\frac{\beta^{2}(\mu+L)^{2}}{\gamma^{2}\mu L(L-\mu)^{2}}\left\langle g_{k-1};\,g_{k}\right\rangle +\frac{\beta^{2}(\mu+L)^{2}}{\gamma L(L-\mu)^{2}}\left\langle g_{k};\,d_{k-1}\right\rangle -\frac{\beta^{2}(\mu+L)^{2}}{\gamma\mu(L-\mu)^{2}}\left\langle g_{k-1};\,d_{k-1}\right\rangle \label{eq:prp_summand_3_expanded}
\end{align}

where again on the second line, we expand the squares and on the third
line, we collect the terms.

\paragraph{Expanded form of (\ref{eq:prp_big_term_1}).}

Now putting (\ref{eq:prp_summand_2_expanded}) and (\ref{eq:prp_summand_3_expanded})
in (\ref{eq:prp_big_term_1}), and then collecting the terms, we arrive
at the following expanded form of (\ref{eq:prp_big_term_1}):

\begin{align}
 & \frac{\beta^{2}(\mu+L)^{2}}{\gamma^{2}\mu L(L-\mu)^{2}}\|g_{k-1}\|^{2}+\frac{\beta^{2}(\mu+L)^{2}}{(L-\mu)^{2}}\|d_{k-1}\|^{2}+\frac{\beta(\mu+L)\left(\mu(\beta-\gamma\mu)-\gamma L^{2}+L(\beta+2\gamma\mu)\right)}{\gamma^{2}\mu L(L-\mu)^{2}}\|g_{k}\|^{2}\nonumber \\
 & +\frac{\beta(\mu+L)\left(\mu(\gamma\mu-2\beta)+\gamma L^{2}-2L(\beta+\gamma\mu)\right)}{\gamma^{2}\mu L(L-\mu)^{2}}\left\langle g_{k-1};\,g_{k}\right\rangle +\frac{4\beta^{2}(\mu+L)}{\gamma(L-\mu)^{2}}\left\langle g_{k};\,d_{k-1}\right\rangle  \nonumber\\
 & -\frac{4\beta^{2}(\mu+L)}{\gamma(L-\mu)^{2}}\left\langle g_{k-1};\,d_{k-1}\right\rangle. \label{eq:simplifed_form_summands}
\end{align}

\paragraph{Expand the first two summands of (\ref{eq:prp_simp_term}).}

Now, we expand the first two summands of (\ref{eq:prp_simp_term})
as follows: 
\begin{align}
 & \|d_{k}\|^{2}-\frac{(1+(\mu/L))^{2}}{4(\mu/L)}\|g_{k}\|^{2}\nonumber \\
= & \|\beta d_{k-1}+g_{k}\|^{2}-\frac{(1+(\mu/L))^{2}}{4(\mu/L)}\|g_{k}\|^{2}\nonumber \\
= & \beta^{2}\|d_{k-1}\|^{2}+\|g_{k}\|^{2}+2\beta\left\langle d_{k-1};\,g_{k}\right\rangle -\frac{(1+(\mu/L))^{2}}{4(\mu/L)}\|g_{k}\|^{2}\nonumber \\
= & \beta^{2}\|d_{k-1}\|^{2}-\frac{(L-\mu)^{2}}{4\mu L}\|g_{k}\|^{2}+2\beta\left\langle d_{k-1};\,g_{k}\right\rangle .\label{eq:prp_simplified_second_summand}
\end{align}

\paragraph{Expand the third summand of (\ref{eq:prp_simp_term}).}

Next, we expand the third summand of (\ref{eq:prp_simp_term}) as
follows:
\begin{align}
 & \frac{4\beta^{2}q}{(1-q)^{2}}\left\Vert d_{k-1}-\tfrac{1+q}{2L\gamma_{k-1}q}g_{k-1}+\tfrac{2\beta(1+q)-L\gamma(1-q)^{2}}{4\beta L\gamma q}g_{k}\right\Vert ^{2}\nonumber \\
= & \frac{4\beta^{2}(\mu/L)}{(1-(\mu/L))^{2}}\Big[\|d_{k-1}\|^{2}+\frac{\left(\mu(\gamma\mu-2\beta)+\gamma L^{2}-2L(\beta+\gamma\mu)\right)^{2}}{16\beta^{2}\gamma^{2}\mu^{2}L^{2}}\|g_{k}\|^{2}+\frac{(\mu+L)^{2}}{4\gamma^{2}\mu^{2}L^{2}}\|g_{k-1}\|^{2}\nonumber \\
 & +\frac{\left(\mu(2\beta-\gamma\mu)-\gamma L^{2}+2L(\beta+\gamma\mu)\right)}{2\beta\gamma\mu L}\left\langle g_{k};\,d_{k-1}\right\rangle -\frac{(\mu+L)}{\gamma\mu L}\left\langle g_{k-1};\,d_{k-1}\right\rangle \nonumber \\
 & +\frac{(\mu+L)\left(\mu(\gamma\mu-2\beta)+\gamma L^{2}-2L(\beta+\gamma\mu)\right)}{4\beta\gamma^{2}\mu^{2}L^{2}}\left\langle g_{k-1};\,g_{k}\right\rangle \Big]\nonumber \\
= & \frac{4\beta^{2}\mu L}{(L-\mu)^{2}}\|d_{k-1}\|^{2}+\frac{\left(\mu(\gamma\mu-2\beta)+\gamma L^{2}-2L(\beta+\gamma\mu)\right)^{2}}{4\gamma^{2}\mu L(L-\mu)^{2}}\|g_{k}\|^{2}+\frac{\beta^{2}(\mu+L)^{2}}{\gamma^{2}\mu L(L-\mu)^{2}}\|g_{k-1}\|^{2}\nonumber \\
 & +2\beta\left(\frac{2\beta(\mu+L)}{\gamma(L-\mu)^{2}}-1\right)\left\langle g_{k};\,d_{k-1}\right\rangle -\frac{4\beta^{2}(\mu+L)}{\gamma(L-\mu)^{2}}\left\langle g_{k-1};\,d_{k-1}\right\rangle \nonumber \\
 & +\frac{\beta(\mu+L)\left(\mu(\gamma\mu-2\beta)+\gamma L^{2}-2L(\beta+\gamma\mu)\right)}{\gamma^{2}\mu L(L-\mu)^{2}}\left\langle g_{k-1};\,g_{k}\right\rangle .\label{eq:prp_simplified_third_summand}
\end{align}

\paragraph{Expanded form of (\ref{eq:prp_simp_term}).}

Finally, putting the expanded expressions from (\ref{eq:prp_simplified_second_summand})
(\ref{eq:prp_simplified_third_summand}) in (\ref{eq:prp_simp_term})
and then collecting the terms, we get:
\begin{align*}
 & \beta^{2}\|d_{k-1}\|^{2}-\frac{(L-\mu)^{2}}{4\mu L}\|g_{k}\|^{2}+2\beta\left\langle d_{k-1};\,g_{k}\right\rangle \\
 & +\frac{4\beta^{2}\mu L}{(L-\mu)^{2}}\|d_{k-1}\|^{2}+\frac{\left(\mu(\gamma\mu-2\beta)+\gamma L^{2}-2L(\beta+\gamma\mu)\right)^{2}}{4\gamma^{2}\mu L(L-\mu)^{2}}\|g_{k}\|^{2}+\frac{\beta^{2}(\mu+L)^{2}}{\gamma^{2}\mu L(L-\mu)^{2}}\|g_{k-1}\|^{2}\\
 & +2\beta\left(\frac{2\beta(\mu+L)}{\gamma(L-\mu)^{2}}-1\right)\left\langle g_{k};\,d_{k-1}\right\rangle -\frac{4\beta^{2}(\mu+L)}{\gamma(L-\mu)^{2}}\left\langle g_{k-1};\,d_{k-1}\right\rangle \\
 & +\frac{\beta(\mu+L)\left(\mu(\gamma\mu-2\beta)+\gamma L^{2}-2L(\beta+\gamma\mu)\right)}{\gamma^{2}\mu L(L-\mu)^{2}}\left\langle g_{k-1};\,g_{k}\right\rangle \\
= & \frac{\beta^{2}(\mu+L)^{2}}{\gamma^{2}\mu L(L-\mu)^{2}}\|g_{k-1}\|^{2}+\frac{\beta^{2}(\mu+L)^{2}}{(L-\mu)^{2}}\|d_{k-1}\|^{2}+\frac{\beta(\mu+L)\left(\mu(\beta-\gamma\mu)-\gamma L^{2}+L(\beta+2\gamma\mu)\right)}{\gamma^{2}\mu L(L-\mu)^{2}}\|g_{k}\|^{2}\\
 & +\frac{\beta(\mu+L)\left(\mu(\gamma\mu-2\beta)+\gamma L^{2}-2L(\beta+\gamma\mu)\right)}{\gamma^{2}\mu L(L-\mu)^{2}}\left\langle g_{k-1};\,g_{k}\right\rangle +\frac{4\beta^{2}(\mu+L)}{\gamma(L-\mu)^{2}}\left\langle g_{k};\,d_{k-1}\right\rangle \\
 & -\frac{4\beta^{2}(\mu+L)}{\gamma(L-\mu)^{2}}\left\langle g_{k-1};\,d_{k-1}\right\rangle ,
\end{align*}
 where the last line is identical to (\ref{eq:simplifed_form_summands}).

 The calculation shown above can also be independently verified using open-source symbolic computation libraries
 \texttt{SymPy} \cite{sympy} and \texttt{Wolfram Language} \cite{wolframLanguage} using the following notebooks available at

\url{https://github.com/Shuvomoy/NCG-PEP-code/blob/main/Symbolic_Verifications/Verify_PRP.ipynb}

and 

\url{https://github.com/Shuvomoy/NCG-PEP-code/blob/main/Symbolic_Verifications/Verify_PRP_Wolfram_Language.ipynb},

respectively.

\subsection{Reformulation for weighted sum of inequalities for Lemma \ref{Bound-on-beta}}\label{app:reform_lemma_2_2}

For notational ease, define $f(x_{k})\triangleq f_{k}$, $f(x_{k-1})\triangleq f_{k-1}$,$\nabla f({x_{k}})\triangleq g_{k}$,
$\nabla f(x_{k-1})\triangleq g_{k-1}$, $\beta_{k-1}\triangleq\beta$,
$\gamma_{k-1}\triangleq\gamma$, and $c_{k-1}\triangleq c$. We want
to show that the weighted sum

\begin{align}
 & \left(\gamma(L+\mu)-\frac{2\sqrt{\beta}}{\sqrt{(c-1)c}}\right)\left[\left\langle g_{k-1};\,d_{k-1}\right\rangle -\|g_{k-1}\|^{2}\right]\nonumber \\
 & +\left(\frac{2}{c}-\gamma(L+\mu)\right)\left[\left\langle g_{k};\,d_{k-1}\right\rangle \right]\nonumber \\
 & +\left(\frac{\sqrt{c-1}}{\sqrt{\beta c}}\right)\left[\|g_{k}\|^{2}-\beta\|g_{k-1}\|^{2}\right]\nonumber \\
 & +\left(-\gamma^{2}L\mu+\frac{\sqrt{\beta}}{c\sqrt{(c-1)c}}\right)\left[\|d_{k-1}\|^{2}-c\|g_{k-1}\|^{2}\right]\nonumber \\
 & +\left(L-\mu\right)\Big[f_{k}-f_{k-1}+\gamma\langle g_{k};\,d_{k-1}\rangle+\tfrac{1}{2L}\|g_{k-1}-g_{k}\|^{2}\nonumber \\
 & \quad\quad+\tfrac{\mu}{2(1-\mu/L)}\|\gamma d_{k-1}-\tfrac{1}{L}(g_{k-1}-g_{k})\|^{2}\Big]\nonumber \\
 & +\left(L-\mu\right)\Big[f_{k-1}-f_{k}-\gamma\langle g_{k-1};\,d_{k-1}\rangle+\tfrac{1}{2L}\|g_{k-1}-g_{k}\|^{2}\nonumber \\
 & \quad\quad+\tfrac{\mu}{2(1-\mu/L)}\|\gamma d_{k-1}-\tfrac{1}{L}(g_{k-1}-g_{k})\|^{2}\Big]\label{eq:FR_big_term_1}
\end{align}

is equal to 

\begin{align} & \|g_{k}\|^{2}-\left(2\sqrt{1-\frac{1}{c}}\sqrt{\beta}-c\gamma^{2}L\mu+\gamma(L+\mu)-1\right)\|g_{k-1}\|^{2} \nonumber \\
 & +\left\Vert \sqrt[4]{\frac{\beta}{(c-1)c^{3}}}d_{k-1}-\sqrt[4]{\frac{\beta c}{c-1}}g_{k-1}+\sqrt[4]{\frac{c-1}{\beta c}}g_{k}\right\Vert ^{2} \label{eq:FR_1_simp_term}
\end{align}

We show this by expanding both terms and doing term-by-term matching. 

\paragraph{Expand the fifth summand of (\ref{eq:FR_big_term_1}).}

First, we expand the second summand of (\ref{eq:FR_big_term_1}) as
follows: 
\begin{align} & \left(L-\mu\right)\Big[f_{k}-f_{k-1}+\gamma\langle g_{k};\,d_{k-1}\rangle+\tfrac{1}{2L}\|g_{k-1}-g_{k}\|^{2}+\tfrac{\mu}{2(1-\mu/L)}\|\gamma d_{k-1}-\tfrac{1}{L}(g_{k-1}-g_{k})\|^{2}\Big]\nonumber\\
= & \left(L-\mu\right)\Big[f_{k}-f_{k-1}+\gamma\langle g_{k};\,d_{k-1}\rangle+\tfrac{1}{2L}\left(\|g_{k-1}\|^{2}-2\left\langle g_{k};\,g_{k-1}\right\rangle +\|g_{k}\|^{2}\right)\nonumber\\
 & +\tfrac{\mu}{2(1-\mu/L)}\Big(\gamma^{2}\|d_{k-1}\|^{2}+\frac{1}{L^{2}}\|g_{k-1}\|^{2}+\frac{1}{L^{2}}\|g_{k}\|^{2}-\frac{2}{L^{2}}\left\langle g_{k};\,g_{k-1}\right\rangle \nonumber\\
 & +\frac{2\gamma}{L}\left\langle d_{k-1};\,g_{k}\right\rangle -\frac{2\gamma}{L}\left\langle d_{k-1};\,g_{k-1}\right\rangle \Big)\Big]\nonumber\\
= & (L-\mu)f_{k}+(\mu-L)f_{k-1}+\frac{1}{2}\gamma^{2}\mu L\|d_{k-1}\|^{2}+\frac{1}{2}\|g_{k-1}\|^{2}+\frac{1}{2}\|g_{k}\|^{2}\nonumber\\
 & -\left\langle g_{k-1};\,g_{k}\right\rangle +\gamma L\left\langle g_{k};\,d_{k-1}\right\rangle -\gamma\mu\left\langle g_{k-1};\,d_{k-1}\right\rangle \label{eq:summand_5_fr_1}
\end{align}
where on the second line, we expand the squares, and on the third line, we collect the terms.

\paragraph{Expand the sixth summand of (\ref{eq:FR_big_term_1}).}

Next, we expand the sixth summand of (\ref{eq:FR_big_term_1}) as
follows: 
\begin{align} & \left(L-\mu\right)\Big[f_{k-1}-f_{k}-\gamma\langle g_{k-1};\,d_{k-1}\rangle+\tfrac{1}{2L}\|g_{k-1}-g_{k}\|^{2}+\tfrac{\mu}{2(1-\mu/L)}\|\gamma d_{k-1}-\tfrac{1}{L}(g_{k-1}-g_{k})\|^{2}\Big]\nonumber\\
= & \left(L-\mu\right)\Big[f_{k-1}-f_{k}-\gamma\langle g_{k-1};\,d_{k-1}\rangle+\tfrac{1}{2L}\left(\|g_{k-1}\|^{2}-2\left\langle g_{k};\,g_{k-1}\right\rangle +\|g_{k}\|^{2}\right)\nonumber\\
 & +\tfrac{\mu}{2(1-\mu/L)}\Big(\gamma^{2}\|d_{k-1}\|^{2}+\frac{1}{L^{2}}\|g_{k-1}\|^{2}+\frac{1}{L^{2}}\|g_{k}\|^{2}-\frac{2}{L^{2}}\left\langle g_{k};\,g_{k-1}\right\rangle \nonumber\\
 & +\frac{2\gamma}{L}\left\langle d_{k-1};\,g_{k}\right\rangle -\frac{2\gamma}{L}\left\langle d_{k-1};\,g_{k-1}\right\rangle \Big)\Big]\nonumber\\
= & (L-\mu)f_{k-1}+(\mu-L)f_{k}+\frac{1}{2}\gamma^{2}\mu L\|d_{k-1}\|^{2}+\frac{1}{2}\|g_{k-1}\|^{2}+\frac{1}{2}\|g_{k}\|^{2}\nonumber\\
 & -\left\langle g_{k-1};\,g_{k}\right\rangle +\gamma\mu\left\langle g_{k};\,d_{k-1}\right\rangle -\gamma L\left\langle g_{k-1};\,d_{k-1}\right\rangle \label{eq:summand_6_fr_1}
\end{align}

where again on the second line, we expand the squares and on the third
line, we collect the terms.

\paragraph{Expanded form of (\ref{eq:FR_big_term_1}).}

Now putting (\ref{eq:summand_5_fr_1}) and (\ref{eq:summand_6_fr_1})
in (\ref{eq:FR_big_term_1}), and then collecting the terms, we arrive
at the following expanded form of (\ref{eq:FR_big_term_1}):

\begin{align}
 & \frac{\sqrt{\beta}}{\sqrt{c-1}c^{3/2}}\|d_{k-1}\|^{2}+\left(\frac{\sqrt{c-1}}{\sqrt{c}\sqrt{\beta}}+1\right)\|g_{k}\|^{2}+\left(-\frac{\sqrt{\beta}(c-2)}{\sqrt{c-1}\sqrt{c}}+c\gamma^{2}\mu L-\gamma(\mu+L)+1\right)\|g_{k-1}\|^{2}\nonumber \\
 & \quad-2\left\langle g_{k-1};\,g_{k}\right\rangle +\frac{2}{c}\left\langle g_{k};\,d_{k-1}\right\rangle -\frac{2\sqrt{\beta}}{\sqrt{c-1}\sqrt{c}}\left\langle g_{k-1};\,d_{k-1}\right\rangle \label{eq:simplified_form_weighted_form_FR_part_1}
\end{align}

\paragraph{Expand the third summand of (\ref{eq:FR_1_simp_term}).}

Next, we expand the third summand of (\ref{eq:FR_1_simp_term}) as
follows: 

\begin{align}
 & \left\Vert \sqrt[4]{\frac{\beta}{(c-1)c}}d_{k-1}-\sqrt[4]{\frac{\beta c}{c-1}}g_{k-1}+\sqrt[4]{\frac{c-1}{\beta c}}g_{k}\right\Vert ^{2}\nonumber \\
= & \frac{\sqrt{\beta}}{\sqrt{c-1}c^{3/2}}\|d_{k-1}\|^{2}+\frac{\sqrt{\beta}\sqrt{c}}{\sqrt{c-1}}\|g_{k-1}\|^{2}+\frac{\sqrt{c-1}}{\sqrt{c}\sqrt{\beta}}\|g_{k}\|^{2}\nonumber \\
 & -2\left\langle g_{k-1};\,g_{k}\right\rangle +\frac{2}{c}\left\langle g_{k};\,d_{k-1}\right\rangle -2\frac{\sqrt{\beta}}{\sqrt{c-1}\sqrt{c}}\left\langle g_{k-1};\,d_{k-1}\right\rangle \label{eq:third_summand_expanded_FR_1}
\end{align}

\paragraph{Expanded form of (\ref{eq:FR_1_simp_term}).}

Finally, putting the expanded expressions from (\ref{eq:simplified_form_weighted_form_FR_part_1})
in (\ref{eq:FR_1_simp_term}) and then collecting the terms, we get:
\begin{align*}
 & \|g_{k}\|^{2}-\left(2\sqrt{1-\frac{1}{c}}\sqrt{\beta}-c\gamma^{2}L\mu+\gamma(L+\mu)-1\right)\|g_{k-1}\|^{2}\\
 & +\left\Vert \sqrt[4]{\frac{\beta}{(c-1)c}}d_{k-1}-\sqrt[4]{\frac{\beta c}{c-1}}g_{k-1}+\sqrt[4]{\frac{c-1}{\beta c}}g_{k}\right\Vert ^{2}\\
= & \|g_{k}\|^{2}-\left(2\sqrt{1-\frac{1}{c}}\sqrt{\beta}-c\gamma^{2}L\mu+\gamma(L+\mu)-1\right)\|g_{k-1}\|^{2}\\
 & +\frac{\sqrt{\beta}}{\sqrt{c-1}c^{3/2}}\|d_{k-1}\|^{2}+\frac{\sqrt{\beta}\sqrt{c}}{\sqrt{c-1}}\|g_{k-1}\|^{2}+\frac{\sqrt{c-1}}{\sqrt{c}\sqrt{\beta}}\|g_{k}\|^{2}\\
 & -2\left\langle g_{k-1};\,g_{k}\right\rangle +\frac{2}{c}\left\langle g_{k};\,d_{k-1}\right\rangle -2\frac{\sqrt{\beta}}{\sqrt{c-1}\sqrt{c}}\left\langle g_{k-1};\,d_{k-1}\right\rangle \\
= & \left(\frac{\sqrt{c-1}}{\sqrt{c}\sqrt{\beta}}+1\right)\|g_{k}\|^{2}+\left(c\gamma^{2}L\mu-\gamma(L+\mu)+1+\sqrt{\beta}\frac{\sqrt{c}}{\sqrt{c-1}}-2\sqrt{\beta}\frac{\sqrt{c-1}}{\sqrt{c}}\right)\|g_{k-1}\|^{2}\\
 & +\frac{\sqrt{\beta}}{\sqrt{c-1}c^{3/2}}\|d_{k-1}\|^{2}-2\left\langle g_{k-1};\,g_{k}\right\rangle +\frac{2}{c}\left\langle g_{k};\,d_{k-1}\right\rangle -2\frac{\sqrt{\beta}}{\sqrt{c-1}\sqrt{c}}\left\langle g_{k-1};\,d_{k-1}\right\rangle \\
= & \frac{\sqrt{\beta}}{\sqrt{c-1}c^{3/2}}\|d_{k-1}\|^{2}+\left(\frac{\sqrt{c-1}}{\sqrt{c}\sqrt{\beta}}+1\right)\|g_{k}\|^{2}+\left(-\frac{\sqrt{\beta}(c-2)}{\sqrt{c-1}\sqrt{c}}+c\gamma^{2}\mu L-\gamma(\mu+L)+1\right)\|g_{k-1}\|^{2}\\
 & \quad-2\left\langle g_{k-1};\,g_{k}\right\rangle +\frac{2}{c}\left\langle g_{k};\,d_{k-1}\right\rangle -\frac{2\sqrt{\beta}}{\sqrt{c-1}\sqrt{c}}\left\langle g_{k-1};\,d_{k-1}\right\rangle 
\end{align*}
where the last line is identical to (\ref{eq:simplified_form_weighted_form_FR_part_1}).

This symbolical calculation shown above can be independently verified using open-source symbolic computation libraries
 \texttt{SymPy} \cite{sympy} and \texttt{Wolfram Language} \cite{wolframLanguage} using the following notebooks available at

{\small \url{https://github.com/Shuvomoy/NCG-PEP-code/blob/main/Symbolic_Verifications/Verify_FR.ipynb}}

and 

{\small \url{https://github.com/Shuvomoy/NCG-PEP-code/blob/main/Symbolic_Verifications/Verify_FR.ipynb}}

(in the cells titled \texttt{Lemma 2.2} of the notebooks), respectively.

\subsection{Reformulation for weighted sum of inequalities for Lemma \ref{thm:FR_1}}\label{app:reform_lemma_2_3}
For notational ease, define $\nabla f({x_k}) \triangleq g_k$, $\nabla f(x_{k-1}) \triangleq g_{k-1}$, $\beta_{k-1} \triangleq \beta$, and $c_{k-1} \triangleq c$. The reformulation of the weighted sum is as follows: 
\begin{align*} & 2\beta\left\langle d_{k-1};\,g_{k}\right\rangle +\beta^{2}\left(\|d_{k-1}\|^{2}-c\|g_{k-1}\|^{2}\right)-c\beta\left(\|g_{k}\|^{2}-\beta\|g_{k-1}\|^{2}\right)\\
= & 2\beta\left\langle d_{k-1};\,g_{k}\right\rangle +\beta^{2}\|d_{k-1}\|^{2}-\cancel{c\beta^{2}\|g_{k-1}\|^{2}}-c\beta\|g_{k}\|^{2}+\cancel{c\beta^{2}\|g_{k-1}\|^{2}}\\
= & 2\beta\left\langle d_{k-1};\,g_{k}\right\rangle +\beta^{2}\|d_{k-1}\|^{2}-c\beta\|g_{k}\|^{2}\\
= & 2\left\langle \beta d_{k-1};\,g_{k}\right\rangle +\|\beta d_{k-1}\|^{2}-c\beta\|g_{k}\|^{2}\\
= & 2\left\langle d_{k}-g_{k};\,g_{k}\right\rangle +\|d_{k}-g_{k}\|^{2}-c\beta\|g_{k}\|^{2}\\
= & \cancel{2\left\langle d_{k};\,g_{k}\right\rangle }-2\|g_{k}\|^{2}+\|d_{k}\|^{2}-\cancel{2\left\langle d_{k};\,g_{k}\right\rangle }+\|g_{k}\|^{2}-c\beta\|g_{k}\|^{2}\\
= & \|d_{k}\|^{2}-\|g_{k}\|^{2}-c\beta\|g_{k}\|^{2}\\
= & \|d_{k}\|^{2}-\left(1+c\beta\right)\|g_{k}\|^{2},
\end{align*}
thus arriving at the simplified form used in the proof. 

This symbolical calculation shown above can be independently verified using open-source symbolic computation libraries
 \texttt{SymPy} \cite{sympy} and \texttt{Wolfram Language} \cite{wolframLanguage} using the following notebooks available at

{\small \url{https://github.com/Shuvomoy/NCG-PEP-code/blob/main/Symbolic_Verifications/Verify_FR.ipynb}}

and 

{\small \url{https://github.com/Shuvomoy/NCG-PEP-code/blob/main/Symbolic_Verifications/Verify_FR.ipynb}}

(in the cells titled \texttt{Lemma 2.3} of the notebooks), respectively.

}

{

\section{Constructing counter-examples}\label{subsec:counter-example}

Once we have solved the nonconvex QCQPs associated with \eqref{eq:PEP_rho_N} or \eqref{eq:PEP_rho_N0}, we can
construct the associated triplets $\{x_{i},g_{i},f_{i}\}_{i\in I_{N}^{\star}}$
and then apply Theorem \ref{thm:scvx-extension} to construct the corresponding ``bad''
function. This ``bad'' function serves as a counter-example, illustrating scenarios where \eqref{eq:NCG} performs poorly. One can access the numerically constructed triplets $\{x_{i},g_{i},f_{i}\}_{i\in I_{N}^{\star}}$ associated with the counter-examples by following the instructions provided in our github repository. Next, we provide a concrete example of how to construct a ``bad'' function for \eqref{eq:PEP_rho_N} from our provided code and datasets located in the folder titled `\texttt{Code\_for\_NCG\_PEP}' of the github repository. Constructing counter-examples for \eqref{eq:PEP_rho_N0} is analogous.

\begin{table}[!ht]
\centering{}%
\begin{tabular}{cccc}
\specialrule{2pt}{1pt}{1pt} 
$x_\star$ & $x_0$ & $x_1$ & $x_2$ \\
\specialrule{2pt}{1pt}{1pt} 
$\begin{bmatrix} 0 \\ 0 \\ 0 \\ 0 \end{bmatrix}$ & 
$\begin{bmatrix} 1.67262 \\ 0 \\ 0 \\ 0 \end{bmatrix}$ & 
$\begin{bmatrix} 0.354109 \\ -0.810313 \\ 0.0775561 \\ 0.000222477 \end{bmatrix}$ & $\begin{bmatrix} -0.140817 \\ -0.322955 \\ -0.138845 \\ 0.000244795 \end{bmatrix}$ \\
\specialrule{2pt}{1pt}{1pt} 
\end{tabular}
\caption{Numerical values of $\{x_i\}_{i\in\{\star, 0, 1, 2\}}$ for constructing the counter-example of Example \ref{example:counter_example}.}
\label{tab:xiVal}
\end{table}

\begin{table}[!ht]
\centering{}%
\begin{tabular}{cccc}
\specialrule{2pt}{1pt}{1pt} 
$g_\star$ & $g_0$ & $g_1$ & $g_2$ \\
\specialrule{2pt}{1pt}{1pt} 
$\begin{bmatrix}
0 \\ 
0 \\ 
0 \\ 
0
\end{bmatrix}$ & 
$
\begin{bmatrix}
1.08734 \\ 
0.237212 \\ 
0 \\ 
0
\end{bmatrix}
$ &
$
\begin{bmatrix}
0.303362 \\ 
-0.47567 \\ 
0.187527 \\ 
0
\end{bmatrix}
$ &
$
\begin{bmatrix}
-0.158567 \\ 
-0.205519 \\ 
-0.100196 \\ 
0.000166564
\end{bmatrix}
$ \\
\specialrule{2pt}{1pt}{1pt} 
\end{tabular}
\caption{Numerical values of $\{g_i\}_{i\in\{\star, 0, 1, 2\}}$ for constructing the counter-example of Example \ref{example:counter_example}.}
\label{tab:giVal}
\end{table}

\begin{table}[!ht]
\centering{}%
\begin{tabular}{cccc}
\specialrule{2pt}{1pt}{1pt} 
$f_\star$ & $f_0$ & $f_1$ & $f_2$ \\
\specialrule{2pt}{1pt}{1pt} 
0 & 1 & 0.267353 & 0.056104 \\
\specialrule{2pt}{1pt}{1pt} 
\end{tabular}
\caption{Numerical values of $\{f_i\}_{i\in\{\star, 0, 1, 2\}}$ for constructing the counter-example of Example \ref{example:counter_example}.}
\label{tab:fiVal}
\end{table}

\begin{exmp}[How to construct counter-examples for \eqref{eq:PEP_rho_N}]\label{example:counter_example}
Suppose we are interested in constructing a ``bad'' function aka
counter-example for the worst-case bound on $\nicefrac{f(x_{k+2})-f_{\star}}{f(x_{k})-f_{\star}}$ (steps for other values of $N$ {are identical}) for
{\PRP} with $q\triangleq \nicefrac{\mu}{L}=0.5$. The resultant ``bad'' function from
$\mbox{\ensuremath{\mathbb{R}}}^{4}$ to $\mbox{\ensuremath{\mathbb{R}}}$
is completely characterized by the triplets $\{x_{i},g_{i},f_{i}\}_{i\in\{\star,0,1,2\}}$, where the triplets can be generated or accessed in two ways:  
\begin{enumerate}
\item we can run `\texttt{1.Example\_Julia.ipynb}' with the 
input parameters and generate the function by solving the
nonconvex QCQP directly and generate the triplets, or
\item we can directly access the triplets from the saved datasets in the folder
\texttt{Saved\_Output\_Files} with instructions provided in the file `\texttt{2.Using\_the\_saved\_datasets\_Julia.ipynb}'.
\end{enumerate}
For the sake of completeness, we provide the
numerical values of $\{x_{i}\}_{i\in\{\star,0,1,2\}},\{g_{i}\}_{i\in\{\star,0,1,2\}},$
and $\{f_{i}\}_{i\in\{\star,0,1,2\}}$ of the function in this setup
in Table \ref{tab:xiVal}, Table \ref{tab:giVal}, and Table \ref{tab:fiVal}, respectively. From the numerical values of the triplets, we can construct the ``bad'' function using Theorem \ref{thm:scvx-extension}. For this constructed function, we have the performance guarantee $\nicefrac{f(x_{k+2})-f_{\star}}{f(x_{k})-f_{\star}} \geq 0.056104$, which closely matches the bound provided in Figure \ref{fig:PRP_Lyapunov}. Additionally, this guarantee can be verified through other {existing} open-source software~\cite{goujaud2022pepit,taylor2017performance}; we provide code for this independent verification in the file called `\texttt{3.PEPIt\_verification\_Python.ipynb}'.
\end{exmp}
}
\end{document}